\newtheorem{theorem}{Theorem}[section]
\newtheorem{lemma}{Lemma}[section]
\newtheorem{proposition}{Proposition}[section]
\theoremstyle{definition}
\newtheorem{remark}{Remark}[section]
\numberwithin{equation}{section}
\newcommand{\ee}{\varsigma}
\begin{document}

\title[Periodic perturbations of central force problems]{Periodic perturbations of central force \\ problems and an application to \\ a restricted $3$-body problem}

\author[A.~Boscaggin]{Alberto Boscaggin}

\address{
Department of Mathematics ``Giuseppe Peano'', University of Torino\\
Via Carlo Alberto 10, 10123 Torino, Italy}

\email{alberto.boscaggin@unito.it}

\author[W.~Dambrosio]{Walter Dambrosio}

\address{
Department of Mathematics ``Giuseppe Peano'', University of Torino\\
Via Carlo Alberto 10, 10123 Torino, Italy}

\email{walter.dambrosio@unito.it}

\author[G.~Feltrin]{Guglielmo Feltrin}

\address{
Department of Mathematics, Computer Science and Physics, University of Udine\\
Via delle Scienze 206, 33100 Udine, Italy}

\email{guglielmo.feltrin@uniud.it}

\thanks{Work written under the auspices of the Grup\-po Na\-zio\-na\-le per l'Anali\-si Ma\-te\-ma\-ti\-ca, la Pro\-ba\-bi\-li\-t\`{a} e le lo\-ro Appli\-ca\-zio\-ni (GNAMPA) of the Isti\-tu\-to Na\-zio\-na\-le di Al\-ta Ma\-te\-ma\-ti\-ca (INdAM). The first and the third author are supported by INdAM--GNAMPA project ``Problemi ai limiti per l'equazione della curvatura media prescritta''. The third author is grateful to the other authors and to the University of Torino for the kind hospitality during the period in which the work was written.
\\
\textbf{Preprint -- October 2021}} 

\subjclass{34C25, 70H08, 70H12.}

\keywords{Periodic solutions, central force problems, time-maps, invariant tori, nearly integrable Hamiltonian systems, action-angle coordinates, restricted $3$-body problem.}

\date{}

\dedicatory{}

\begin{abstract}
We consider a perturbation of a central force problem of the form
\begin{equation*}
\ddot x = V'(|x|) \frac{x}{|x|} + \varepsilon \,\nabla_x U(t,x), \quad x \in \mathbb{R}^{2} \setminus \{0\},
\end{equation*}
where $\varepsilon \in \mathbb{R}$ is a small parameter, $V\colon (0,+\infty) \to \mathbb{R}$ and $U\colon \mathbb{R} \times (\mathbb{R}^{2} \setminus \{0\}) \to \mathbb{R}$ are smooth functions, and $U$ is $\tau$-periodic in the first variable.
Based on the introduction of suitable time-maps (the radial period and the apsidal angle) for the unperturbed problem ($\varepsilon=0$) and of an associated non-degeneracy condition, we apply an higher-dimensional version of the Poincar\'{e}--Birkhoff fixed point theorem to prove the existence of non-circular $\tau$-periodic solutions bifurcating from invariant tori at $\varepsilon=0$.
We then prove that this non-degeneracy condition is satisfied for some concrete examples of physical interest (including the homogeneous potential $V(r)=\kappa/r^{\alpha}$ for $\alpha\in(-\infty,2)\setminus\{-2,0,1\}$). Finally, an application is given to a restricted $3$-body problem with a non-Newtonian interaction.
\end{abstract}

\maketitle

\section{Introduction}\label{section-1}

In this paper, we investigate the existence of periodic solutions for systems of differential equations of the form
\begin{equation}\label{eq-intro}
\ddot x = V'(|x|) \frac{x}{|x|} + \varepsilon \,\nabla_x U(t,x), \quad x \in \mathbb{R}^{2} \setminus \{0\},
\end{equation}
where $\varepsilon \in \mathbb{R}$ is a parameter, $V\colon (0,+\infty) \to \mathbb{R}$ and $U\colon \mathbb{R} \times (\mathbb{R}^{2} \setminus \{0\}) \to \mathbb{R}$ are (smooth) potentials, and $U$ is $\tau$-periodic in the first variable. The above system appears as a perturbation of a central force problem and we are actually interested in the existence of solutions bifurcating, for $\varepsilon \to 0$, from the set of $\tau$-periodic solutions of the unperturbed problem ($\varepsilon = 0$). 

In this setting, a typical strategy consists in looking for periodic solutions bifurcating from the set of circular solutions of the unperturbed problem. For instance, in \cite{AmCo-89} the existence of nearly-circular periodic solutions is proved for the singular problem
\begin{equation}
\ddot x = -\kappa \dfrac{x}{|x|^{\alpha + 2}} + \varepsilon \,\nabla_x U(t,x), 
\end{equation}
where $\kappa > 0$ and $\alpha \in (0,2)$. The approach in \cite{AmCo-89} is variational, finding critical points of the associated action functional via an abstract perturbation theorem previously established in \cite{AmCoEk-87}, which requires as a crucial hypothesis that the manifold of critical points for the unperturbed action functional is non-degenerate, in a suitable sense. 
Notice that essentially no assumptions on the perturbation term are made in the non-Newtonian case (that is, $\alpha \neq 1$), while, due to the degeneracy of the Kepler problem, some symmetry conditions on $U$ are required when $\alpha = 1$. 
For further results in a similar spirit see \cite{FoGa-18,FoTo-08} and the references therein.

As it is well-known, however, the central force problem
\begin{equation}\label{eq-cenint}
\ddot x = V'(|x|) \frac{x}{|x|} 
\end{equation}
could have also a great variety of non-circular periodic solutions. This is the case, for instance, for the $-\alpha$-homogeneous central force problem 
\begin{equation}\label{eq-homintro}
\ddot x = -\kappa \dfrac{x}{|x|^{\alpha + 2}},
\end{equation}
where $\alpha \in (0,2)$ (see \cite[p.~7]{AmCo-93} and \cite[Section~2.8]{Ar-89}). Notice that, if it exists, a non-circular periodic solution to \eqref{eq-cenint} is actually part of a manifold which is at least two-dimensional, containing in fact all its time-translations and space-rotations. This is a consequence of the time-invariance and rotational-invariance of any central force problem in the plane, corresponding in turn to the existence of two first integrals: the energy and the angular momentum. Incidentally, we observe that, for the Kepler problem (that is, 
system \eqref{eq-homintro} for $\alpha = 1$) the manifold of periodic solutions of a fixed period is actually three-dimensional, containing all the Keplerian ellipses with fixed major semi-axis and, so, the circular solutions as a special case. On the contrary, this is not the case for system 
\eqref{eq-homintro} with $\alpha \in (0,2) \setminus \{1\}$, for which the set of non-circular periodic solutions with a given period is the union of two-dimensional tori (see again \cite[Section~2.8]{Ar-89}). To the best of our knowledge, the possibility of using tools of nonlinear analysis to bifurcate from non-circular periodic solutions of system \eqref{eq-homintro}, and more in general of system \eqref{eq-cenint}, has not yet been explored. Within this context, indeed, a serious difficulty consists in checking that a suitable non-degeneracy condition for the (at least two-dimensional) manifold of solutions holds true. 

Alternatively, the existence of periodic solutions of a system like \eqref{eq-intro} can be tackled by using dynamical system techniques. Indeed, it is well-known that, under mild assumptions, the central force problem \eqref{eq-cenint} can be regarded as a completely integrable Hamiltonian system with two degrees of freedom, so that problem \eqref{eq-intro} is interpreted as a perturbation of an integrable Hamiltonian system. Then, in a spirit which can be meant as a periodic counterpart of KAM theory, one could look for periodic solutions bifurcating from invariant tori of the unperturbed problem, see \cite{BeKa-87,Ch-92} as well as the more recent contribution \cite{FoGaGi-16} relying on a higher-dimensional version of the Poincar\'{e}--Birkhoff fixed point theorem \cite{FoUr-17} (see also \cite{FoSaZa-12} for similar results in the case of one degree of freedom).
Within this approach, the difficulty is now that action-angle coordinates $(I,\varphi)$ 
for the unperturbed problem must be constructed in order to check that the KAM non-degeneracy condition
\begin{equation}\label{eq-nondeg}
\mathrm{det\,} \nabla^{2} \mathcal{K}(I) \neq 0
\end{equation}
is satisfied, where $\mathcal{K}$ is the Hamiltonian of the unperturbed problem in action-angle coordinates. 
A successful attempt in this direction is recently given in \cite{BoDaFe-pp} for the perturbed relativistic Kepler problem
\begin{equation*}
\frac{\mathrm{d}}{\mathrm{d}t}\left( \frac{\dot x}{\sqrt{1 - |\dot x|^{2}/c^{2}}}\right) = -\kappa \, \frac{x}{|x|^{3}} + \varepsilon \,\nabla_x U(t,x),
\end{equation*}
where an explicit formula for the Hamiltonian in action-angle coordinates is provided. For the homogeneous central force problem \eqref{eq-homintro}, and more in general for problem \eqref{eq-cenint}, however, this does not seem to be possible.

The main aim of this paper is to provide, in the general setting of problem \eqref{eq-intro}, a more explicit non-degeneracy condition for the application of the above mentioned Hamiltonian perturbation approach.
The crucial ingredients for this are two suitable functions $T(H,L)$ and $\Theta(H,L)$, where $H$ and $L$ denote the energy and the angular momentum of the unperturbed problem, respectively. In more detail (see Section~\ref{section-2.1} for the precise definitions)
the function $T$ is nothing but the time-map for the radial component of a solution with energy $H$ and angular momentum $L$, while
the function $\Theta$ is the so-called apsidal angle, namely the angular variation of the solution in a radial period (in turn, such a function $\Theta$ can be meant as a time-map of a suitable oscillator, as well). The relation between the time-maps $T$ and $\Theta$ and the action-angle variables for the central force problem \eqref{eq-cenint} is independent on the specific choice of the potential $V$, as discussed in Section~\ref{section-2.2}. Based on these functions $T$ and $\Theta$, the non-degeneracy condition proposed in this paper reads as
\begin{equation}\label{eq-nondeg2}
\partial_{H} T(H,L) \cdot \partial_{L} \Theta(H,L) - \partial_{L} T(H,L) \cdot \partial_{H} \Theta(H,L) \neq 0.
\end{equation}
Actually, condition \eqref{eq-nondeg2} is nothing but an equivalent formulation of the usual non-degeneracy condition \eqref{eq-nondeg}, having however the advantage of depending only on the potential $V$ appearing in system \eqref{eq-cenint} and, instead, not requiring the explicit knowledge of the Hamiltonian in action-angle coordinates. With this in mind, our main result reads as follows (see Theorem~\ref{th-main} for a more precise statement).

\begin{theorem}\label{th-intro}
Let us assume that there exists a $\tau$-periodic non-circular solution $x^{*}$ to \eqref{eq-cenint} with energy $H^{*}$ and angular momentum $L^{*}$. 
If condition \eqref{eq-nondeg2} holds true at $(H^{*},L^{*})$, then for $\varepsilon$ small enough there exist at least three $\tau$-periodic solutions to system~\eqref{eq-intro}.
\end{theorem}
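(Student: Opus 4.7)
The plan is to recast the perturbed system \eqref{eq-intro} as a nearly-integrable Hamiltonian system in action-angle coordinates and then apply the higher-dimensional Poincar\'{e}--Birkhoff fixed point theorem of Fonda and Ure\~{n}a \cite{FoUr-17}, in the spirit of \cite{FoGaGi-16,BoDaFe-pp}. First, I would pass to polar coordinates $(r,\theta)$ in the plane. The unperturbed problem \eqref{eq-cenint} conserves the angular momentum $L = r^{2}\dot{\theta}$, which reduces the radial motion to a one-dimensional Hamiltonian system with effective potential $W_{L}(r) = L^{2}/(2r^{2}) - V(r)$ and energy $H$. The existence of the non-circular $\tau$-periodic solution $x^{*}$ ensures that $(H^{*},L^{*})$ lies in an open region where $W_{L^{*}}$ possesses two simple turning points, so that the radial motion is a genuine oscillation with radial period $T(H^{*},L^{*})$ and apsidal angle $\Theta(H^{*},L^{*})$. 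On a neighborhood of this region I would introduce symplectic action-angle coordinates $(I_{1},I_{2},\varphi_{1},\varphi_{2}) \in \mathcal{D} \times \mathbb{T}^{2}$, with $I_{2}=L$ the angular momentum and $I_{1}$ the radial action, so that the unperturbed Hamiltonian becomes a smooth function $\mathcal{K}(I_{1},I_{2})$ whose frequencies are $\omega_{i} = \partial \mathcal{K}/\partial I_{i}$.

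The second step is to translate condition \eqref{eq-nondeg2} into the KAM non-degeneracy \eqref{eq-nondeg}. Using the general relations recalled in Section~\ref{section-2.2}, the radial frequency is $\omega_{1} = 2\pi/T$ and the angular frequency is $\omega_{2} = \Theta/T$, while the change of variables $(H,L) \leftrightarrow (I_{1},I_{2})$ is a local diffeomorphism since $\partial I_{1}/\partial H = T/(2\pi) > 0$. A direct chain-rule computation of $\nabla^{2}\mathcal{K}$ then shows that \eqref{eq-nondeg2} at $(H^{*},L^{*})$ is equivalent to $\det \nabla^{2}\mathcal{K}(I^{*}) \neq 0$; in particular the frequency map $I \mapsto \omega(I)$ is a local diffeomorphism at $I^{*}$.

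The $\tau$-periodicity of $x^{*}$ forces a precise resonance at $I^{*}$: there exist integers $m \geq 1$ and $k$ with $T(H^{*},L^{*}) = \tau/m$ and $\Theta(H^{*},L^{*}) = 2\pi k/m$, so the unperturbed frequencies are rationally dependent and the invariant torus $\{I=I^{*}\}$ is entirely foliated by $\tau$-periodic orbits. Writing the perturbed Hamiltonian as $\mathcal{K}(I) + \varepsilon\, \mathcal{U}(t,I,\varphi)$ and passing to a co-rotating system of angles that removes the resonant drift, the invertibility of $\nabla^{2}\mathcal{K}$ produces the twist condition required by the higher-dimensional Poincar\'{e}--Birkhoff theorem on a thin annular neighborhood of $\{I=I^{*}\}$. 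Applied to the time-$\tau$ map of the perturbed flow, that theorem then yields, for $\varepsilon$ small enough, at least $n+1 = 3$ distinct fixed points, each corresponding to a $\tau$-periodic solution of \eqref{eq-intro}.

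The main difficulty I expect lies in verifying the geometric hypotheses of the Poincar\'{e}--Birkhoff theorem in this setting: one has to ensure that the Poincar\'{e} map is globally well-defined on a suitable covering annulus (so that solutions remain in the chart, avoid the singularity at $x=0$, and do not reach the apsidal boundaries over $[0,\tau]$), and that the twist condition persists uniformly under the $\varepsilon$-perturbation. Additional care is needed in passing from fixed points of the Poincar\'{e} map back to geometrically distinct $\tau$-periodic solutions of \eqref{eq-intro}, keeping track of the $2\pi$-periodicity in the angular variable $\theta$ and of the rotational symmetry which is present at $\varepsilon=0$ but destroyed by a general perturbation $U$.
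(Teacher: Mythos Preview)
Your proposal is correct and follows essentially the same route as the paper: pass to action-angle coordinates for the unperturbed central force problem, show via the chain rule that condition \eqref{eq-nondeg2} is equivalent to $\det\nabla^{2}\mathcal{K}(I^{*})\neq 0$, check the resonance relation coming from $\tau$-periodicity of $x^{*}$, and then invoke the higher-dimensional Poincar\'{e}--Birkhoff theorem of \cite{FoUr-17} in the form packaged in \cite{FoGaGi-16,BoDaFe-pp}. The only cosmetic difference is that the paper uses the shifted radial action $I_{1}=\tfrac{1}{2\pi}\mathcal{A}(H,L)+L$, so that the second frequency is $(\Theta-2\pi)/T$ rather than $\Theta/T$; this differs from your choice by a unimodular linear change and does not affect the Hessian determinant. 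The technical concerns you raise in your last paragraph (well-posedness of the Poincar\'{e} map near the torus, persistence of twist, distinctness of the three solutions) are precisely what is absorbed into the abstract perturbation result \cite[Theorem~3.1]{BoDaFe-pp} that the paper cites as a black box.
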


The details of the proof of the above theorem are given in Section~\ref{section-3}. Here, we just emphasize that these three $\tau$-periodic solutions to \eqref{eq-intro} are obtained, via the Poincar\'{e}--Birkhoff fixed point theorem, bifurcating from the invariant torus of the unperturbed problem containing the $\tau$-periodic non-circular solution $x^{*}$, as well as its time-translations and space-rotations. 
We also mention that a version of this result can be stated for perturbations of central force problems driven by the relativistic operator, namely
\begin{equation*}
\frac{\mathrm{d}}{\mathrm{d}t}\left( \frac{\dot x}{\sqrt{1 - |\dot x|^{2}/c^{2}}}\right) =  V'(|x|) \frac{x}{|x|} + \varepsilon \,\nabla_x U(t,x),
\end{equation*}
as well (cf.~Remark~\ref{rem-relativistic}).

In Section~\ref{section-4}, the effectiveness of the non-degeneracy condition \eqref{eq-nondeg2} is investigated in some concrete problems of physical interest. As a main example (see Section~\ref{section-4.2}), we deal with the homogeneous problem \eqref{eq-homintro}, in the full range $\alpha \in (-\infty,2)$ with $\alpha \neq 0$, thus considering both the case of Keplerian-like problems (that is, $\alpha > -1$) and the case of sublinear/superlinear oscillators (that is, $\alpha \leq -1$); incidentally, we recall that, for $\alpha \geq 2$, system \eqref{eq-homintro} does not possess non-circular periodic solutions. For a generic choice of $\alpha$, the explicit computation
of the time-map $T$ and $\Theta$ and of their derivatives is not possible. However, taking advantage of the homogeneity of the problem, it is possible to reduce the study of the sign of the left-hand side of \eqref{eq-nondeg2} to the one of $\partial_{H} \Theta$, that is, to the strict monotonicity of the apsidal angle as a function of the energy. This issue is investigated in \cite{Ca-15,Ro-18} and the results therein allow us to conclude that condition \eqref{eq-nondeg2} holds true provided $\alpha \neq -2$ (the harmonic oscillator) and $\alpha \neq 1$ (the Kepler problem). 
In both these excluded cases, the function $\Theta$ is in fact constant, corresponding to the degenerate picture in which all bounded orbits are periodic, as provided by the celebrated Bertrand's theorem (cf.~\cite{Be-1873}). For perturbations of the homogeneous problem \eqref{eq-homintro}, our result thus seems to be essentially sharp. Indeed, by the linear theory of ODEs, it is well known that periodic solutions to the harmonic oscillator cannot be provided for any perturbation term, due to resonance phenomena. On the other hand, perturbations of the Kepler problem have been investigated by assuming further condition on the perturbation term (as in \cite{AmCo-89}, see also \cite{FoGa-17, GiDe-89, SeTe-94}) or meaning the solutions in a generalized sense, allowing collisions with the singularity, see \cite{BaOrVe-21,BoDaPa-20, BoOrZh-19}. 

Other examples of applications are provided in Section~\ref{section-4.1} and Section~\ref{section-4.3}, dealing with the Levi-Civita equation 
(cf.~\cite{LeCi-28})
\begin{equation*}
\ddot x = -\kappa \dfrac{x}{|x|^{3}} - 2\lambda \dfrac{x}{|x|^4}, \quad \kappa,\lambda > 0,
\end{equation*}
and with the Lennard-Jones equation (cf.~\cite{LJ-31})
\begin{equation*}
\ddot x = -24 \ee \sigma^6 \frac{x}{|x|^8} + 48 \ee \sigma^{12} \frac{x}{|x|^{14}}, \quad \ee,\sigma > 0,
\end{equation*}
respectively, see the introduction of Section~\ref{section-4} for more details. In the first case, an explicit computation for the
left-hand side of \eqref{eq-nondeg2} is provided and the conclusion is easily obtained. In the latter, instead, we study the asymptotic behaviour of the left-hand side of \eqref{eq-nondeg2} as $H$ goes to its minimum admissible value,
eventually proving that condition \eqref{eq-nondeg2} is satisfied locally. It is worth noticing that for this proof we need to provide
a series of auxiliary results for the derivative of a time-map with respect to a parameter. This analysis is partially inspired by the well-known papers by Chicone and Schaaf \cite{Ch-87,Sc-85} and is given in a final Appendix, which hopefully may have some independent interest.

In Section~\ref{section-5}, we finally present an application of our approach to a restricted planar circular $3$-body problem
with a non-Newtonian interaction, precisely
\begin{equation*}
\ddot q = GM \dfrac{\xi_{M}(t) - q}{| \xi_{M}(t) - q |^{\alpha+2}} + Gm \dfrac{\xi_{m}(t) - q}{| \xi_{m}(t) - q |^{\alpha+2}}, \quad q \in \mathbb{R}^{2},
\end{equation*}
where $G$ is the gravitational constant, $\xi_M$ and $\xi_m$ are the heavy bodies with masses $M,m>0$, and $\alpha \in (0,2)$ with $\alpha \neq 1$. After a suitable change of variable, such a problem is interpreted as a perturbation, when $m \to 0$, of the homogeneous problem \eqref{eq-homintro}, so that Theorem~\ref{th-intro} can be applied. More precisely, by carefully analyzing the set of periodic solutions of the unperturbed problem so as to select the invariant tori for the bifurcation, we show that for any choice of the minimal period of the primaries a plethora of periodic solutions to the restricted problem can be provided when $m$ is small enough. 

To conclude this introduction, we briefly mention that from the non-degeneracy of the unperturbed problems proved in the paper other consequences can be derived. For instance, standard KAM theorems (see \cite[Chapter~10]{Ar-89} and \cite{Du-14}) can be applied to ensure the persistence of quasi-periodic invariant tori for small and sufficiently smooth perturbation terms.

\section{Preliminaries on central force problems }\label{section-2}

In this section, we summarize some known facts about the central force problem
\begin{equation}\label{eq-centr}
\ddot x = V'(|x|) \frac{x}{|x|}, \quad x \in \mathbb{R}^{2} \setminus \{0\},
\end{equation}
where $V\colon(0,+\infty)\to\mathbb{R}$ is a $\mathcal{C}^{2}$-function.

We recall that system \eqref{eq-centr} has two natural first integrals, namely the \textit{energy}
\begin{equation}\label{energy}
H = \dfrac{1}{2} |\dot x|^{2} - V(|x|)
\end{equation}
and the (scalar) \textit{angular momentum}
\begin{equation}\label{ang-mom}
L = \langle x, i \dot x \rangle.
\end{equation}
Restricting our analysis to pairs $(H,L)$ such that the corresponding solutions of \eqref{eq-centr} are bounded (periodic or quasi-periodic) and not rectilinear (that is, $L \neq 0$), in Section~\ref{section-2.1}, we analyze the dynamics in polar coordinates, so as to define two suitable functions $T$ and $\Theta$ of $(H,L)$ (see formulas \eqref{def-T-HL} and \eqref{def-Theta-HL}) which will play a crucial role in our main result. Then, in Section~\ref{section-2.2}, we adopt a Hamiltonian point of view and we present the construction of the action-angle coordinates in a neighbourhood of a fixed invariant torus with energy $H$ and angular momentum $L$.

Notice that, by reversing the time ($t \to -t$), it is not restrictive to assume that $L$ is positive. This will be tacitly assumed throughout the paper.

\subsection{Periodic solutions}\label{section-2.1}

Introducing the polar coordinates
\begin{equation*}
x(t) = r(t) \bigl{(} \cos(\vartheta(t)), \sin(\vartheta(t)) \bigr{)} = r(t) e^{i\vartheta(t)},
\end{equation*}
formula \eqref{ang-mom} reads as
\begin{equation*}
L = r^{2} \dot{\vartheta}.
\end{equation*}
Notice that, since $L > 0$, the solutions wind around the origin in the counter-clockwise sense.
In polar coordinates \eqref{energy} becomes
\begin{equation*}
H = \dfrac{1}{2}\dot{r}^{2} + W(r;L),
\end{equation*}
where
\begin{equation*}
W(r;L) = \dfrac{L^{2}}{2r^{2}} - V(r)
\end{equation*}
is the so-called \textit{effective potential}.

From now on, we assume that 
\begin{itemize}
\item [$(h_{W})$] \textit{there exists an open interval $J\subseteq(0,+\infty)$ such that for all $L\in J$ the function $W(\cdot;L)$ has a (strict) local minimum at $r=r_{0}(L)$ with $W'(\cdot;L)<0$ on $(r_{*}(L),r_{0}(L))$ and $W'(\cdot;L)>0$ 
on $(r_{0}(L),r^{*}(L))$, for some $0\leq r_{*}(L) < r_{0}(L) < r^{*}(L)\leq +\infty$.}
\end{itemize}
Setting
\begin{equation*}
w_{0}(L)=-W(r_{0}(L);L),
\end{equation*}
for all $L\in J$, by $(h_{W})$ there exists a value $H_{0}(L)>-w_{0}(L)$ such that for all $H\in(-w_{0}(L),H_{0}(L))$ the equation $W(r;L)=H$ has two solutions $r_{\pm}(H,L)$
such that
\begin{equation*}
r_{-}(H,L) \in (r_{*}(L),r_{0}(L)), \qquad
r_{+}(H,L) \in (r_{0}(L),r^{*}(L)).
\end{equation*}
For simplicity, we assume that $H_{0}$ depends continuously on $L$, so that the set
\begin{equation}\label{dom-HL}
\Lambda = \bigl{\{} (H,L)\in\mathbb{R}^{2} \colon L\in J, \; H\in (-w_{0}(L), H_{0}(L)) \bigr{\}}
\end{equation}
is open. For values $(H,L)\in\Lambda$, the corresponding orbits in the $(r,\dot r)$-plane are closed, winding around the point $(r_{0}(L),0)$ with minimal period given by
\begin{equation}\label{def-T-HL}
T(H,L) = \sqrt{2} \int_{r_{-}(H,L)}^{r_{+}(H,L)} \dfrac{\mathrm{d}r}{\sqrt{H-W(r;L)}}.
\end{equation}
See Figure~\ref{fig-01} for a graphical representation.

\begin{figure}[htb]
\begin{tikzpicture}
\begin{axis}[
  tick label style={font=\scriptsize},
  axis y line=left, 
  axis x line=middle,
  xtick={0},
  ytick={-0.304574,-0.2,0},
  xticklabels={},
  yticklabels={$-w_{0}(L)$,$H$,$0$},
  xlabel={\small $r$},
  ylabel={\small $W(r;L)$},
every axis x label/.style={
    at={(ticklabel* cs:1.0)},
    anchor=west,
},
every axis y label/.style={
    at={(ticklabel* cs:1.0)},
    anchor=south,
},
  width=6cm,
  height=6cm,
  xmin=0,
  xmax=8,
  ymin=-0.4,
  ymax=0.6]
\addplot [color=black,line width=0.9pt,smooth] coordinates {(0.1, 42.2567) (0.179, 11.1014) (0.258, 4.3267) (0.337, 1.94107) (0.416, 0.887245) (0.495, 0.357586) (0.574, 0.0694774) (0.653, -0.094917) (0.732, -0.191016) (0.811, -0.247283) (0.89, -0.279344) (0.969, -0.296244) (1.048, -0.303435) (1.127, -0.304323) (1.206, -0.301089) (1.285, -0.295166) (1.364, -0.287507) (1.443, -0.278761) (1.522, -0.269371) (1.601, -0.25964) (1.68, -0.24978) (1.759, -0.239936) (1.838, -0.230209) (1.917, -0.220666) (1.996, -0.211352) (2.075, -0.202295) (2.154, -0.193513) (2.233, -0.185014) (2.312, -0.1768) (2.391, -0.168869) (2.47, -0.161217) (2.549, -0.153838) (2.628, -0.146722) (2.707, -0.139861) (2.786, -0.133247) (2.865, -0.126868) (2.944, -0.120715) (3.023, -0.114779) (3.102, -0.109051) (3.181, -0.10352) (3.26, -0.0981792) (3.339, -0.0930192) (3.418, -0.088032) (3.497, -0.0832098) (3.576, -0.0785453) (3.655, -0.0740315) (3.734, -0.0696615) (3.813, -0.0654292) (3.892, -0.0613283) (3.971, -0.0573533) (4.05, -0.0534986) (4.129, -0.0497591) (4.208, -0.04613) (4.287, -0.0426065) (4.366, -0.0391842) (4.445, -0.0358591) (4.524, -0.0326271) (4.603, -0.0294844) (4.682, -0.0264275) (4.761, -0.023453) (4.84, -0.0205577) (4.919, -0.0177384) (4.998, -0.0149924) (5.077, -0.0123167) (5.156, -0.00970886) (5.235, -0.00716624) (5.314, -0.00468648) (5.393, -0.00226729) (5.472, 0.0000934928) (5.551, 0.00239795) (5.63, 0.00464806) (5.709, 0.00684572) (5.788, 0.00899272) (5.867, 0.0110908) (5.946, 0.0131416) (6.025, 0.0151467) (6.104, 0.0171076) (6.183, 0.0190258) (6.262, 0.0209026) (6.341, 0.0227393) (6.42, 0.0245373) (6.499, 0.0262977) (6.578, 0.0280218) (6.657, 0.0297105) (6.736, 0.0313651) (6.815, 0.0329865) (6.894, 0.0345757) (6.973, 0.0361337) (7.052, 0.0376614) (7.131, 0.0391597) (7.21, 0.0406294) (7.289, 0.0420713) (7.368, 0.0434863) (7.447, 0.044875) (7.526, 0.0462382) (7.605, 0.0475766) (7.684, 0.0488909) (7.763, 0.0501817) (7.842, 0.0514497) (7.921, 0.0526954) (8., 0.0539194)};
\draw [color=gray, dashed, line width=0.3pt] (axis cs:0,-0.304574)--(axis cs:1.1,-0.304574);
\draw [color=gray, dashed, line width=0.3pt] (axis cs:0,-0.2)--(axis cs:2.09541,-0.2);
\draw [color=gray, dashed, line width=0.3pt] (axis cs:0.742085,0.015)--(axis cs:0.742085,-0.2);
\draw [color=gray, dashed, line width=0.3pt] (axis cs:1.1,0.015)--(axis cs:1.1,-0.304574);
\draw [color=gray, dashed, line width=0.3pt] (axis cs:2.09541,0.015)--(axis cs:2.09541,-0.2);
\node at (axis cs: 1.4,0.05) {\scriptsize{$r_{0}(L)$}};
\draw [color=gray, line width=0.3pt] (axis cs: 0.742085,0)--(axis cs:4.8,-0.25);
\draw [color=gray, line width=0.3pt] (axis cs: 2.09541,0)--(axis cs:4.8,-0.18);
\node at (axis cs: 6,-0.18) {\scriptsize{$r_{+}(H,L)$}};
\node at (axis cs: 6,-0.25) {\scriptsize{$r_{-}(H,L)$}};
\fill (axis cs: 0.742085,0) circle (0.8pt);
\fill (axis cs: 2.09541,0) circle (0.8pt);
\end{axis}
\end{tikzpicture}
\quad\quad
\begin{tikzpicture}
\begin{axis}[
  tick label style={font=\scriptsize},
  axis y line=left, 
  axis x line=middle,
  xtick={1.2},
  ytick={0},
  xticklabels={},
  yticklabels={$0$},
  xlabel={\small $r$},
  ylabel={\small $\dot r$},
every axis x label/.style={
    at={(ticklabel* cs:1.0)},
    anchor=west,
},
every axis y label/.style={
    at={(ticklabel* cs:1.0)},
    anchor=south,
},
  width=6cm,
  height=6cm,
  xmin=0,
  xmax=4.5,
  ymin=-3.3,
  ymax=3.3]
\draw[->] [color=black,line width=0.9pt] (axis cs: 0.812, 1.94068)--(axis cs: 0.868, 2.27431);
\draw[->] [color=black,line width=0.9pt] (axis cs: 2.828, 1.86766)--(axis cs: 2.884, 1.79682);
\draw[->] [color=black,line width=0.9pt] (axis cs: 0.868, -2.27431)--(axis cs: 0.812, -1.94068);
\draw[->] [color=black,line width=0.9pt] (axis cs: 2.884, -1.79682)--(axis cs: 2.828, -1.86766);
\addplot [color=black,line width=0.9pt,smooth] coordinates {(0.7355, 0) (0.7368, 0.16485) (0.7384, 0.381708) (0.74, 0.511707) (0.7416, 0.612917) (0.7432, 0.698019) (0.7448, 0.772395) (0.7464, 0.838936) (0.748, 0.899413) (0.7496, 0.955007) (0.7512, 1.00655) (0.7528, 1.05467) (0.7544, 1.09983) (0.756, 1.1424) (0.7576, 1.18268) (0.7592, 1.22092) (0.7608, 1.25732) (0.7624, 1.29204) (0.764, 1.32525) (0.7656, 1.35706) (0.7672, 1.38759) (0.7688, 1.41692) (0.7704, 1.44515) (0.772, 1.47234) (0.7736, 1.49858) (0.7752, 1.52391) (0.7768, 1.54839) (0.7784, 1.57208) (0.78, 1.59501) (0.812, 1.94068) (0.868, 2.27431) (0.924, 2.45743) (0.98, 2.56894) (1.036, 2.64052) (1.092, 2.68754) (1.148, 2.71834) (1.204, 2.7378) (1.26, 2.74895) (1.316, 2.75377) (1.372, 2.75355) (1.428, 2.74922) (1.484, 2.74141) (1.54, 2.73059) (1.596, 2.71708) (1.652, 2.70114) (1.708, 2.68293) (1.764, 2.66258) (1.82, 2.6402) (1.876, 2.61583) (1.932, 2.58951) (1.988, 2.56126) (2.044, 2.53107) (2.1, 2.49894) (2.156, 2.46482) (2.212, 2.42868) (2.268, 2.39044) (2.324, 2.35005) (2.38, 2.30741) (2.436, 2.26241) (2.492, 2.21493) (2.548, 2.16483) (2.604, 2.11193) (2.66, 2.05602) (2.716, 1.99688) (2.772, 1.93421) (2.828, 1.86766) (2.884, 1.79682) (2.94, 1.72117) (2.996, 1.64004) (3.052, 1.55259) (3.108, 1.45768) (3.164, 1.35375) (3.22, 1.23855) (3.276, 1.10858) (3.3, 1.04701) (3.304, 1.03634) (3.308, 1.02554) (3.312, 1.01461) (3.316, 1.00354) (3.32, 0.992334) (3.324, 0.980983) (3.328, 0.969483) (3.332, 0.957828) (3.336, 0.946011) (3.34, 0.934028) (3.344, 0.921871) (3.348, 0.909534) (3.352, 0.897009) (3.356, 0.884288) (3.36, 0.871362) (3.364, 0.858223) (3.368, 0.84486) (3.372, 0.831263) (3.376, 0.817419) (3.38, 0.803317) (3.384, 0.788942) (3.388, 0.774279) (3.392, 0.759311) (3.396, 0.744021) (3.4, 0.728387) (3.404, 0.712387) (3.408, 0.695995) (3.412, 0.679185) (3.416, 0.661923) (3.42, 0.644173) (3.424, 0.625894) (3.428, 0.607038) (3.432, 0.587549) (3.436, 0.567362) (3.44, 0.546401) (3.444, 0.524571) (3.448, 0.50176) (3.452, 0.477827) (3.456, 0.452595) (3.46, 0.425832) (3.464, 0.397229) (3.468, 0.366355) (3.472, 0.332579) (3.476, 0.294905) (3.48, 0.251587) (3.484, 0.198975) (3.488, 0.12591) (3.49, 0)};
\addplot [color=black,line width=0.9pt,smooth] coordinates {(0.7355, 0) (0.7368, -0.16485) (0.7384, -0.381708) (0.74, -0.511707) (0.7416, -0.612917) (0.7432, -0.698019) (0.7448, -0.772395) (0.7464, -0.838936) (0.748, -0.899413) (0.7496, -0.955007) (0.7512, -1.00655) (0.7528, -1.05467) (0.7544, -1.09983) (0.756, -1.1424) (0.7576, -1.18268) (0.7592, -1.22092) (0.7608, -1.25732) (0.7624, -1.29204) (0.764, -1.32525) (0.7656, -1.35706) (0.7672, -1.38759) (0.7688, -1.41692) (0.7704, -1.44515) (0.772, -1.47234) (0.7736, -1.49858) (0.7752, -1.52391) (0.7768, -1.54839) (0.7784, -1.57208) (0.78, -1.59501) (0.812, -1.94068) (0.868, -2.27431) (0.924, -2.45743) (0.98, -2.56894) (1.036, -2.64052) (1.092, -2.68754) (1.148, -2.71834) (1.204, -2.7378) (1.26, -2.74895) (1.316, -2.75377) (1.372, -2.75355) (1.428, -2.74922) (1.484, -2.74141) (1.54, -2.73059) (1.596, -2.71708) (1.652, -2.70114) (1.708, -2.68293) (1.764, -2.66258) (1.82, -2.6402) (1.876, -2.61583) (1.932, -2.58951) (1.988, -2.56126) (2.044, -2.53107) (2.1, -2.49894) (2.156, -2.46482) (2.212, -2.42868) (2.268, -2.39044) (2.324, -2.35005) (2.38, -2.30741) (2.436, -2.26241) (2.492, -2.21493) (2.548, -2.16483) (2.604, -2.11193) (2.66, -2.05602) (2.716, -1.99688) (2.772, -1.93421) (2.828, -1.86766) (2.884, -1.79682) (2.94, -1.72117) (2.996, -1.64004) (3.052, -1.55259) (3.108, -1.45768) (3.164, -1.35375) (3.22, -1.23855) (3.276, -1.10858) (3.3, -1.04701) (3.304, -1.03634) (3.308, -1.02554) (3.312, -1.01461) (3.316, -1.00354) (3.32, -0.992334) (3.324, -0.980983) (3.328, -0.969483) (3.332, -0.957828) (3.336, -0.946011) (3.34, -0.934028) (3.344, -0.921871) (3.348, -0.909534) (3.352, -0.897009) (3.356, -0.884288) (3.36, -0.871362) (3.364, -0.858223) (3.368, -0.84486) (3.372, -0.831263) (3.376, -0.817419) (3.38, -0.803317) (3.384, -0.788942) (3.388, -0.774279) (3.392, -0.759311) (3.396, -0.744021) (3.4, -0.728387) (3.404, -0.712387) (3.408, -0.695995) (3.412, -0.679185) (3.416, -0.661923) (3.42, -0.644173) (3.424, -0.625894) (3.428, -0.607038) (3.432, -0.587549) (3.436, -0.567362) (3.44, -0.546401) (3.444, -0.524571) (3.448, -0.50176) (3.452, -0.477827) (3.456, -0.452595) (3.46, -0.425832) (3.464, -0.397229) (3.468, -0.366355) (3.472, -0.332579) (3.476, -0.294905) (3.48, -0.251587) (3.484, -0.198975) (3.488, -0.12591) (3.49, 0)};
\node at (axis cs: 1.3, -0.3) {\scriptsize{$r_{0}(L)$}};
\draw [color=gray, line width=0.3pt] (axis cs: 0.7355, 0)--(axis cs:2.85,3);
\draw [color=gray, line width=0.3pt] (axis cs: 3.49, 0)--(axis cs:3.49,2.2);
\node at (axis cs: 3.5, 3) {\scriptsize{$r_{-}(H,L)$}};
\node at (axis cs: 3.5, 2.5) {\scriptsize{$r_{+}(H,L)$}};
\fill (axis cs: 0.7355, 0) circle (0.8pt);
\fill (axis cs: 3.49, 0) circle (0.8pt);
\end{axis}
\end{tikzpicture}
\captionof{figure}{Qualitative graph of $W(\cdot;L)$ with angular momentum $L\in J$ (on the left) and representation of the bounded orbit in the $(r,\dot r)$-plane with $(H,L)\in\Lambda$ (on the right).}
\label{fig-01}
\end{figure}
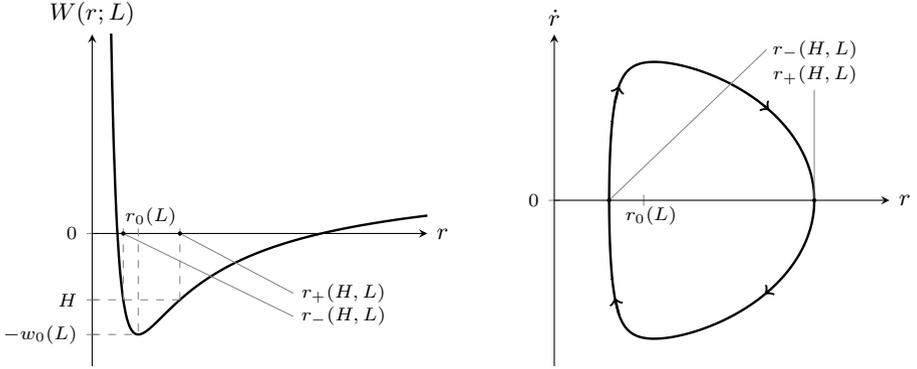

We then consider the function $\Theta \colon \Lambda \to \mathbb{R}$ defined by
\begin{equation}\label{def-Theta-HL}
\Theta(H,L)=\int_{0}^{T(H,L)} \dot{\vartheta}(t)\,\mathrm{d}t.
\end{equation}
For all $(H,L)\in\Lambda$, $\Theta(H,L)$ is the variation of the angular component $\vartheta$ of a solution $x$ of \eqref{eq-centr} in the radial period $T(H,L)$ and is called \emph{apsidal angle}.
We remark that the solution $x$ is periodic if and only if $\Theta(H,L)$ is commensurable with $2\pi$, that is there exist two coprime positive integers $n$ and $k$ such that
\begin{equation}\label{comm}
\Theta(H,L) = 2\pi \frac{k}{n} .
\end{equation}
More precisely, in such a case the solution $x$ has minimal period $nT(H,L)$ with winding number around the origin equals to $k$. 
Throughout the paper, we will call such solutions \emph{periodic solutions of type $(n,k)$}.

For further convenience, exploiting the relation $\dot\vartheta=L/r^{2}$, we notice that we can provide for $\Theta(H,L)$ the more explicit formula
\begin{equation*}
\Theta(H,L) 
= \int_{0}^{T(H,L)} \dfrac{L}{r^{2}(t)} \,\mathrm{d}t 
= \sqrt{2} L \int_{r_{-}(H,L)}^{r_{+}(H,L)} \dfrac{\mathrm{d}r}{r^{2} \sqrt{H-W(r;L)}}.
\end{equation*}
Moreover, via the Clairaut's change of variable $u=1/r$, we have
\begin{equation}\label{def-P}
\Theta(H,L) = L P(H,L),
\end{equation}
where
\begin{equation}\label{def-P-HL}
P(H,L) = \sqrt{2} \int_{\frac{1}{r_{+}(H,L)}}^{\frac{1}{r_{-}(H,L)}} \dfrac{\mathrm{d}u}{\sqrt{H-W(\frac{1}{u};L)}}.
\end{equation}
In such a way, the computation of $\Theta(H,L)$ is reduced to the computation of the period $P(H,L)$ of the abstract oscillator
\begin{equation*}
\dfrac{1}{2} \dot u^{2} + W\biggl{(}\dfrac{1}{u};L\biggr{)} = H
\end{equation*}
(compare with \cite{Ar-89, OrRo-19, Ro-18} for a similar trick).

\begin{remark}\label{rem-2.1}
We notice that when \eqref{comm} does not hold the solution $x$ is a \textit{quasi-periodic} function with two rationally linearly independent frequencies. Hence, the image of $x$ is a dense subset of the planar annulus $\{x\in\mathbb{R}^{2} \colon r_{-}(H,L)\leq |x| \leq r_{+}(H,L)\}$. Unless the function $\Theta$ is constant, the central force problem \eqref{eq-centr} thus possesses both periodic and quasi-periodic solutions. 

According to the celebrated Bertrand's theorem (cf.~\cite{Be-1873, OrRo-19}), there are only two central force problems for which all bounded solutions are periodic: the harmonic oscillator and the Kepler problem, corresponding to $V(|x|)=-\frac{\kappa}{2} |x|^{2}$ and $V(|x|)=\kappa /|x|$ with $\kappa>0$, respectively. In such cases the function $\Theta$ is in fact constant, precisely $\Theta\equiv\pi$ for the harmonic oscillator and $\Theta\equiv2\pi$ for the Kepler problem. See \cite[Section~2.8]{Ar-89} for more details.
\hfill$\lhd$
\end{remark}

\subsection{Invariant tori and action-angle coordinates}\label{section-2.2}

System \eqref{eq-centr} can be interpreted as the first order Hamiltonian system
\begin{equation}\label{hamilt-syst}
\dot x = \nabla_{p}\mathcal{H}(x,p), \qquad \dot p = -\nabla_{x}\mathcal{H}(x,p),
\end{equation}
where
\begin{equation*}
\mathcal{H}(x,p) = \dfrac{1}{2} |p|^{2} - V(|x|), \quad (x,p)\in(\mathbb{R}^{2}\setminus\{0\})\times\mathbb{R}^{2}.
\end{equation*}
Notice that, since $\dot x = p$ along a solution, $\mathcal{H}(x,p)=H$ (cf.~\eqref{energy}), according to the well-known fact that the Hamiltonian $\mathcal{H}$ is a first integral of system \eqref{hamilt-syst}. We also define
\begin{equation}\label{def-angh}
\mathcal{L}(x,p) = \langle x, i p \rangle
\end{equation}
and observe that $\mathcal{L}(x,p)=L$ (cf.~\eqref{ang-mom}), thus providing a second first integral of system \eqref{hamilt-syst}.

For a pair $(H,L)\in\Lambda$ as in Section~\ref{section-2.1} (giving bounded solutions), we consider the level set
\begin{equation*}
\mathcal{T}_{(H,L)} = \bigl{\{}(x,p)\in(\mathbb{R}^{2}\setminus\{0\})\times\mathbb{R}^{2} 
\colon \mathcal{H}(x,p)=H, \; \mathcal{L}(x,p)=L \bigr{\}}.
\end{equation*}
In principle, $\mathcal{T}_{(H,L)}$ can be disconnected, so that we focus on the compact connected component
\begin{equation}\label{def-t0}
\mathcal{T}_{(H,L)}^{0} = \bigl{\{}(x,p)\in\mathcal{T}_{(H,L)} \colon r_{-}(H,L) \leq |x| \leq r_{+}(H,L) \bigr{\}}.
\end{equation}
It is easy to check that the assumptions of Liouville--Arnol'd theorem hold true on $\mathcal{T}_{(H,L)}^{0}$, and thus $\mathcal{T}_{(H,L)}^{0}$ is diffeomorphic to a two-dimensional torus $\mathbb{T}^{2}$ (cf.~\cite{Ar-89, BeFa-notes}). This is in agreement with the fact that, given a solution $x$ of \eqref{eq-centr} with energy $H$ and angular momentum $L$, the function 
\begin{equation*}
x_{\lambda,\phi}(t) = x(t+\lambda) e^{i\phi}, \quad \lambda\in\mathbb{R}, \; \phi\in\mathbb{R},
\end{equation*}
is still a solution of \eqref{eq-centr} with same energy $H$ and angular momentum $L$.

Following \cite{Be-notes}, we now describe the construction of the action-angle variables $(I_{1},I_{2},\varphi_{1},\varphi_{2})$ on $\mathcal{T}_{(H,L)}^{0}$. In what follows it is convenient to visualize the torus $\mathcal{T}_{(H,L)}^{0}$ as in Figure~\ref{fig-02}. In particular, when it is useful, we can describe the points $(x,p)\in \mathcal{T}_{(H,L)}^{0}$ via the coordinates $(r, \dot r, \vartheta, L)$.

\begin{figure}[htb]
\centering
\begin{tikzpicture}
\begin{axis}[
  tick label style={font=\scriptsize},
  axis y line=left, 
  axis x line=middle,
  xtick={0.840896},
  ytick={0},
  xticklabels={},
  yticklabels={$0$},
  xlabel={\small $r$},
  ylabel={\small $\dot r$},
every axis x label/.style={
    at={(ticklabel* cs:1.0)},
    anchor=west,
},
every axis y label/.style={
    at={(ticklabel* cs:1.0)},
    anchor=south,
},
  width=6cm,
  height=6cm,
  xmin=0,
  xmax=4.9,
  ymin=-4.2,
  ymax=4.3]
\addplot [color=gray,line width=0.8pt,smooth] coordinates {(0.541, 0) (0.5412, 0.00451457) (0.542, 0.064726) (0.5428, 0.0912963) (0.5436, 0.111612) (0.5444, 0.12867) (0.5452, 0.143638) (0.546, 0.157113) (0.5468, 0.169453) (0.5476, 0.180891) (0.5484, 0.191588) (0.5492, 0.201663) (0.55, 0.211205) (0.5508, 0.220283) (0.5516, 0.228954) (0.5524, 0.237262) (0.5532, 0.245243) (0.554, 0.25293) (0.5548, 0.260349) (0.5556, 0.267522) (0.5564, 0.274468) (0.5572, 0.281205) (0.558, 0.287747) (0.5588, 0.294108) (0.5596, 0.3003) (0.5604, 0.306332) (0.5612, 0.312215) (0.562, 0.317956) (0.5628, 0.323564) (0.5636, 0.329045) (0.5644, 0.334406) (0.5652, 0.339653) (0.566, 0.34479) (0.5668, 0.349824) (0.5676, 0.354758) (0.5684, 0.359597) (0.5692, 0.364345) (0.57, 0.369005) (0.5768, 0.40551) (0.5924, 0.473612) (0.608, 0.527026) (0.6236, 0.570412) (0.6392, 0.606352) (0.6548, 0.636468) (0.6704, 0.66186) (0.686, 0.683317) (0.7016, 0.701426) (0.7172, 0.71664) (0.7328, 0.729314) (0.7484, 0.739732) (0.764, 0.748127) (0.7796, 0.754687) (0.7952, 0.75957) (0.8108, 0.762907) (0.8264, 0.764808) (0.842, 0.765364) (0.8576, 0.764652) (0.8732, 0.762735) (0.8888, 0.759668) (0.9044, 0.755492) (0.92, 0.750242) (0.9356, 0.743943) (0.9512, 0.736613) (0.9668, 0.728264) (0.9824, 0.718898) (0.998, 0.708513) (1.0136, 0.697096) (1.0292, 0.684629) (1.0448, 0.671083) (1.0604, 0.656422) (1.076, 0.640595) (1.0916, 0.62354) (1.1072, 0.605179) (1.1228, 0.585413) (1.1384, 0.564118) (1.154, 0.541136) (1.1696, 0.516265) (1.1852, 0.489237) (1.2008, 0.459695) (1.2164, 0.42714) (1.232, 0.390841) (1.2476, 0.34966) (1.2632, 0.301627) (1.2788, 0.242738) (1.29, 0.188248) (1.2904, 0.185988) (1.2908, 0.183698) (1.2912, 0.181378) (1.2916, 0.179027) (1.292, 0.176643) (1.2924, 0.174225) (1.2928, 0.171772) (1.2932, 0.169281) (1.2936, 0.166752) (1.294, 0.164183) (1.2944, 0.161571) (1.2948, 0.158915) (1.2952, 0.156211) (1.2956, 0.153459) (1.296, 0.150655) (1.2964, 0.147796) (1.2968, 0.144879) (1.2972, 0.1419) (1.2976, 0.138855) (1.298, 0.135741) (1.2984, 0.132551) (1.2988, 0.129281) (1.2992, 0.125924) (1.2996, 0.122473) (1.3, 0.11892) (1.3004, 0.115256) (1.3008, 0.111468) (1.3012, 0.107545) (1.3016, 0.103472) (1.302, 0.0992283) (1.3024, 0.0947926) (1.3028, 0.0901362) (1.3032, 0.0852229) (1.3036, 0.0800054) (1.304, 0.0744198) (1.3044, 0.0683759) (1.3048, 0.0617392) (1.3052, 0.0542928) (1.3056, 0.0456422) (1.306, 0.034903) (1.3064, 0.0187815) (1.3066, 0)};
\addplot [color=gray,line width=0.8pt,smooth] coordinates {(0.541, 0) (0.5412, -0.00451457) (0.542, -0.064726) (0.5428, -0.0912963) (0.5436, -0.111612) (0.5444, -0.12867) (0.5452, -0.143638) (0.546, -0.157113) (0.5468, -0.169453) (0.5476, -0.180891) (0.5484, -0.191588) (0.5492, -0.201663) (0.55, -0.211205) (0.5508, -0.220283) (0.5516, -0.228954) (0.5524, -0.237262) (0.5532, -0.245243) (0.554, -0.25293) (0.5548, -0.260349) (0.5556, -0.267522) (0.5564, -0.274468) (0.5572, -0.281205) (0.558, -0.287747) (0.5588, -0.294108) (0.5596, -0.3003) (0.5604, -0.306332) (0.5612, -0.312215) (0.562, -0.317956) (0.5628, -0.323564) (0.5636, -0.329045) (0.5644, -0.334406) (0.5652, -0.339653) (0.566, -0.34479) (0.5668, -0.349824) (0.5676, -0.354758) (0.5684, -0.359597) (0.5692, -0.364345) (0.57, -0.369005) (0.5768, -0.40551) (0.5924, -0.473612) (0.608, -0.527026) (0.6236, -0.570412) (0.6392, -0.606352) (0.6548, -0.636468) (0.6704, -0.66186) (0.686, -0.683317) (0.7016, -0.701426) (0.7172, -0.71664) (0.7328, -0.729314) (0.7484, -0.739732) (0.764, -0.748127) (0.7796, -0.754687) (0.7952, -0.75957) (0.8108, -0.762907) (0.8264, -0.764808) (0.842, -0.765364) (0.8576, -0.764652) (0.8732, -0.762735) (0.8888, -0.759668) (0.9044, -0.755492) (0.92, -0.750242) (0.9356, -0.743943) (0.9512, -0.736613) (0.9668, -0.728264) (0.9824, -0.718898) (0.998, -0.708513) (1.0136, -0.697096) (1.0292, -0.684629) (1.0448, -0.671083) (1.0604, -0.656422) (1.076, -0.640595) (1.0916, -0.62354) (1.1072, -0.605179) (1.1228, -0.585413) (1.1384, -0.564118) (1.154, -0.541136) (1.1696, -0.516265) (1.1852, -0.489237) (1.2008, -0.459695) (1.2164, -0.42714) (1.232, -0.390841) (1.2476, -0.34966) (1.2632, -0.301627) (1.2788, -0.242738) (1.29, -0.188248) (1.2904, -0.185988) (1.2908, -0.183698) (1.2912, -0.181378) (1.2916, -0.179027) (1.292, -0.176643) (1.2924, -0.174225) (1.2928, -0.171772) (1.2932, -0.169281) (1.2936, -0.166752) (1.294, -0.164183) (1.2944, -0.161571) (1.2948, -0.158915) (1.2952, -0.156211) (1.2956, -0.153459) (1.296, -0.150655) (1.2964, -0.147796) (1.2968, -0.144879) (1.2972, -0.1419) (1.2976, -0.138855) (1.298, -0.135741) (1.2984, -0.132551) (1.2988, -0.129281) (1.2992, -0.125924) (1.2996, -0.122473) (1.3, -0.11892) (1.3004, -0.115256) (1.3008, -0.111468) (1.3012, -0.107545) (1.3016, -0.103472) (1.302, -0.0992283) (1.3024, -0.0947926) (1.3028, -0.0901362) (1.3032, -0.0852229) (1.3036, -0.0800054) (1.304, -0.0744198) (1.3044, -0.0683759) (1.3048, -0.0617392) (1.3052, -0.0542928) (1.3056, -0.0456422) (1.306, -0.034903) (1.3064, -0.0187815) (1.3066, 0)};
\addplot [color=gray,line width=0.8pt,smooth] coordinates {(0.319, 0) (0.32, 0.121604) (0.321, 0.211001) (0.322, 0.271976) (0.323, 0.321154) (0.324, 0.36336) (0.325, 0.400808) (0.326, 0.434733) (0.327, 0.465909) (0.328, 0.494858) (0.329, 0.521955) (0.33, 0.547478) (0.331, 0.571639) (0.332, 0.594608) (0.333, 0.61652) (0.334, 0.637487) (0.335, 0.657602) (0.336, 0.676941) (0.337, 0.695573) (0.338, 0.713553) (0.339, 0.730934) (0.34, 0.747757) (0.341, 0.764062) (0.342, 0.779882) (0.343, 0.795249) (0.344, 0.81019) (0.345, 0.824728) (0.346, 0.838888) (0.347, 0.852688) (0.348, 0.866148) (0.349, 0.879285) (0.35, 0.892114) (0.351, 0.904649) (0.352, 0.916904) (0.353, 0.928892) (0.354, 0.940622) (0.355, 0.952107) (0.356, 0.963356) (0.357, 0.974378) (0.358, 0.985182) (0.359, 0.995777) (0.36, 1.00617) (0.38, 1.18025) (0.42, 1.41037) (0.46, 1.55739) (0.5, 1.65831) (0.54, 1.73024) (0.58, 1.78249) (0.62, 1.82068) (0.66, 1.84839) (0.7, 1.86804) (0.74, 1.88131) (0.78, 1.88938) (0.82, 1.89314) (0.86, 1.89324) (0.9, 1.89016) (0.94, 1.88429) (0.98, 1.8759) (1.02, 1.86521) (1.06, 1.8524) (1.1, 1.8376) (1.14, 1.8209) (1.18, 1.80236) (1.22, 1.78204) (1.26, 1.75996) (1.3, 1.73613) (1.34, 1.71054) (1.38, 1.68317) (1.42, 1.65398) (1.46, 1.62291) (1.5, 1.5899) (1.54, 1.55485) (1.58, 1.51767) (1.62, 1.4782) (1.66, 1.4363) (1.7, 1.39176) (1.74, 1.34434) (1.78, 1.29375) (1.82, 1.23962) (1.86, 1.18147) (1.9, 1.1187) (1.94, 1.0505) (1.98, 0.975737) (2.02, 0.892784) (2.06, 0.799109) (2.1, 0.690378) (2.14, 0.557871) (2.15, 0.518973) (2.153, 0.506681) (2.156, 0.494063) (2.159, 0.481095) (2.162, 0.467747) (2.165, 0.453985) (2.168, 0.43977) (2.171, 0.425059) (2.174, 0.409796) (2.177, 0.393917) (2.18, 0.377346) (2.183, 0.359986) (2.186, 0.341717) (2.189, 0.322385) (2.192, 0.301786) (2.195, 0.279639) (2.198, 0.255543) (2.201, 0.228882) (2.204, 0.198627) (2.207, 0.162787) (2.21, 0.116306) (2.213, 0.0231379) (2.215, 0)};
\addplot [color=gray,line width=0.8pt,smooth] coordinates {(0.319, 0) (0.32, -0.121604) (0.321, -0.211001) (0.322, -0.271976) (0.323, -0.321154) (0.324, -0.36336) (0.325, -0.400808) (0.326, -0.434733) (0.327, -0.465909) (0.328, -0.494858) (0.329, -0.521955) (0.33, -0.547478) (0.331, -0.571639) (0.332, -0.594608) (0.333, -0.61652) (0.334, -0.637487) (0.335, -0.657602) (0.336, -0.676941) (0.337, -0.695573) (0.338, -0.713553) (0.339, -0.730934) (0.34, -0.747757) (0.341, -0.764062) (0.342, -0.779882) (0.343, -0.795249) (0.344, -0.81019) (0.345, -0.824728) (0.346, -0.838888) (0.347, -0.852688) (0.348, -0.866148) (0.349, -0.879285) (0.35, -0.892114) (0.351, -0.904649) (0.352, -0.916904) (0.353, -0.928892) (0.354, -0.940622) (0.355, -0.952107) (0.356, -0.963356) (0.357, -0.974378) (0.358, -0.985182) (0.359, -0.995777) (0.36, -1.00617) (0.38, -1.18025) (0.42, -1.41037) (0.46, -1.55739) (0.5, -1.65831) (0.54, -1.73024) (0.58, -1.78249) (0.62, -1.82068) (0.66, -1.84839) (0.7, -1.86804) (0.74, -1.88131) (0.78, -1.88938) (0.82, -1.89314) (0.86, -1.89324) (0.9, -1.89016) (0.94, -1.88429) (0.98, -1.8759) (1.02, -1.86521) (1.06, -1.8524) (1.1, -1.8376) (1.14, -1.8209) (1.18, -1.80236) (1.22, -1.78204) (1.26, -1.75996) (1.3, -1.73613) (1.34, -1.71054) (1.38, -1.68317) (1.42, -1.65398) (1.46, -1.62291) (1.5, -1.5899) (1.54, -1.55485) (1.58, -1.51767) (1.62, -1.4782) (1.66, -1.4363) (1.7, -1.39176) (1.74, -1.34434) (1.78, -1.29375) (1.82, -1.23962) (1.86, -1.18147) (1.9, -1.1187) (1.94, -1.0505) (1.98, -0.975737) (2.02, -0.892784) (2.06, -0.799109) (2.1, -0.690378) (2.14, -0.557871) (2.15, -0.518973) (2.153, -0.506681) (2.156, -0.494063) (2.159, -0.481095) (2.162, -0.467747) (2.165, -0.453985) (2.168, -0.43977) (2.171, -0.425059) (2.174, -0.409796) (2.177, -0.393917) (2.18, -0.377346) (2.183, -0.359986) (2.186, -0.341717) (2.189, -0.322385) (2.192, -0.301786) (2.195, -0.279639) (2.198, -0.255543) (2.201, -0.228882) (2.204, -0.198627) (2.207, -0.162787) (2.21, -0.116306) (2.213, -0.0231379) (2.215, 0)};
\draw[->] [color=black,line width=0.9pt] (axis cs: 0.312, 1.94068)--(axis cs: 0.368, 2.27431);
\draw[->] [color=black,line width=0.9pt] (axis cs: 2.328, 1.86766)--(axis cs: 2.384, 1.79682);
\draw[->] [color=black,line width=0.9pt] (axis cs: 0.368, -2.27431)--(axis cs: 0.312, -1.94068);
\draw[->] [color=black,line width=0.9pt] (axis cs: 2.384, -1.79682)--(axis cs: 2.328, -1.86766);
\addplot [color=black, fill=gray, fill opacity=0.2, line width=0.9pt,smooth] coordinates {(0.2355, 0) (0.2368, 0.16485) (0.2384, 0.381708) (0.24, 0.511707) (0.2416, 0.612917) (0.2432, 0.698019) (0.2448, 0.772395) (0.2464, 0.838936) (0.248, 0.899413) (0.2496, 0.955007) (0.2512, 1.00655) (0.2528, 1.05467) (0.2544, 1.09983) (0.256, 1.1424) (0.2576, 1.18268) (0.2592, 1.22092) (0.2608, 1.25732) (0.2624, 1.29204) (0.264, 1.32525) (0.2656, 1.35706) (0.2672, 1.38759) (0.2688, 1.41692) (0.2704, 1.44515) (0.272, 1.47234) (0.2736, 1.49858) (0.2752, 1.52391) (0.2768, 1.54839) (0.2784, 1.57208) (0.28, 1.59501) (0.312, 1.94068) (0.368, 2.27431) (0.424, 2.45743) (0.48, 2.56894) (0.536, 2.64052) (0.592, 2.68754) (0.648, 2.71834) (0.704, 2.7378) (0.76, 2.74895) (0.816, 2.75377) (0.872, 2.75355) (0.928, 2.74922) (0.984, 2.74141) (1.04, 2.73059) (1.096, 2.71708) (1.152, 2.70114) (1.208, 2.68293) (1.264, 2.66258) (1.32, 2.6402) (1.376, 2.61583) (1.432, 2.58951) (1.488, 2.56126) (1.544, 2.53107) (1.6, 2.49894) (1.656, 2.46482) (1.712, 2.42868) (1.768, 2.39044) (1.824, 2.35005) (1.88, 2.30741) (1.936, 2.26241) (1.992, 2.21493) (2.048, 2.16483) (2.104, 2.11193) (2.16, 2.05602) (2.216, 1.99688) (2.272, 1.93421) (2.328, 1.86766) (2.384, 1.79682) (2.44, 1.72117) (2.496, 1.64004) (2.552, 1.55259) (2.608, 1.45768) (2.664, 1.35375) (2.72, 1.23855) (2.776, 1.10858) (2.8, 1.04701) (2.804, 1.03634) (2.808, 1.02554) (2.812, 1.01461) (2.816, 1.00354) (2.82, 0.992334) (2.824, 0.980983) (2.828, 0.969483) (2.832, 0.957828) (2.836, 0.946011) (2.84, 0.934028) (2.844, 0.921871) (2.848, 0.909534) (2.852, 0.897009) (2.856, 0.884288) (2.86, 0.871362) (2.864, 0.858223) (2.868, 0.84486) (2.872, 0.831263) (2.876, 0.817419) (2.88, 0.803317) (2.884, 0.788942) (2.888, 0.774279) (2.892, 0.759311) (2.896, 0.744021) (2.9, 0.728387) (2.904, 0.712387) (2.908, 0.695995) (2.912, 0.679185) (2.916, 0.661923) (2.92, 0.644173) (2.924, 0.625894) (2.928, 0.607038) (2.932, 0.587549) (2.936, 0.567362) (2.94, 0.546401) (2.944, 0.524571) (2.948, 0.50176) (2.952, 0.477827) (2.956, 0.452595) (2.96, 0.425832) (2.964, 0.397229) (2.968, 0.366355) (2.972, 0.332579) (2.976, 0.294905) (2.98, 0.251587) (2.984, 0.198975) (2.988, 0.12591) (2.99, 0)};
\addplot [color=black, fill=gray, fill opacity=0.2, line width=0.9pt,smooth] coordinates {(0.2355, 0) (0.2368, -0.16485) (0.2384, -0.381708) (0.24, -0.511707) (0.2416, -0.612917) (0.2432, -0.698019) (0.2448, -0.772395) (0.2464, -0.838936) (0.248, -0.899413) (0.2496, -0.955007) (0.2512, -1.00655) (0.2528, -1.05467) (0.2544, -1.09983) (0.256, -1.1424) (0.2576, -1.18268) (0.2592, -1.22092) (0.2608, -1.25732) (0.2624, -1.29204) (0.264, -1.32525) (0.2656, -1.35706) (0.2672, -1.38759) (0.2688, -1.41692) (0.2704, -1.44515) (0.272, -1.47234) (0.2736, -1.49858) (0.2752, -1.52391) (0.2768, -1.54839) (0.2784, -1.57208) (0.28, -1.59501) (0.312, -1.94068) (0.368, -2.27431) (0.424, -2.45743) (0.48, -2.56894) (0.536, -2.64052) (0.592, -2.68754) (0.648, -2.71834) (0.704, -2.7378) (0.76, -2.74895) (0.816, -2.75377) (0.872, -2.75355) (0.928, -2.74922) (0.984, -2.74141) (1.04, -2.73059) (1.096, -2.71708) (1.152, -2.70114) (1.208, -2.68293) (1.264, -2.66258) (1.32, -2.6402) (1.376, -2.61583) (1.432, -2.58951) (1.488, -2.56126) (1.544, -2.53107) (1.6, -2.49894) (1.656, -2.46482) (1.712, -2.42868) (1.768, -2.39044) (1.824, -2.35005) (1.88, -2.30741) (1.936, -2.26241) (1.992, -2.21493) (2.048, -2.16483) (2.104, -2.11193) (2.16, -2.05602) (2.216, -1.99688) (2.272, -1.93421) (2.328, -1.86766) (2.384, -1.79682) (2.44, -1.72117) (2.496, -1.64004) (2.552, -1.55259) (2.608, -1.45768) (2.664, -1.35375) (2.72, -1.23855) (2.776, -1.10858) (2.8, -1.04701) (2.804, -1.03634) (2.808, -1.02554) (2.812, -1.01461) (2.816, -1.00354) (2.82, -0.992334) (2.824, -0.980983) (2.828, -0.969483) (2.832, -0.957828) (2.836, -0.946011) (2.84, -0.934028) (2.844, -0.921871) (2.848, -0.909534) (2.852, -0.897009) (2.856, -0.884288) (2.86, -0.871362) (2.864, -0.858223) (2.868, -0.84486) (2.872, -0.831263) (2.876, -0.817419) (2.88, -0.803317) (2.884, -0.788942) (2.888, -0.774279) (2.892, -0.759311) (2.896, -0.744021) (2.9, -0.728387) (2.904, -0.712387) (2.908, -0.695995) (2.912, -0.679185) (2.916, -0.661923) (2.92, -0.644173) (2.924, -0.625894) (2.928, -0.607038) (2.932, -0.587549) (2.936, -0.567362) (2.94, -0.546401) (2.944, -0.524571) (2.948, -0.50176) (2.952, -0.477827) (2.956, -0.452595) (2.96, -0.425832) (2.964, -0.397229) (2.968, -0.366355) (2.972, -0.332579) (2.976, -0.294905) (2.98, -0.251587) (2.984, -0.198975) (2.988, -0.12591) (2.99, 0)};
\addplot [color=gray,line width=0.8pt,smooth] coordinates {(0.1895, 0) (0.1908, 0.478626) (0.1934, 0.771288) (0.196, 0.972715) (0.1986, 1.13301) (0.2012, 1.26814) (0.2038, 1.38574) (0.2064, 1.49016) (0.209, 1.5842) (0.2116, 1.66978) (0.2142, 1.74829) (0.2168, 1.82077) (0.2194, 1.88804) (0.222, 1.95075) (0.2246, 2.00943) (0.2272, 2.06451) (0.2298, 2.11635) (0.2324, 2.16527) (0.235, 2.21154) (0.2376, 2.25538) (0.2402, 2.297) (0.2428, 2.33657) (0.2454, 2.37425) (0.248, 2.41017) (0.2506, 2.44448) (0.2532, 2.47726) (0.2558, 2.50863) (0.2584, 2.53868) (0.261, 2.56749) (0.2636, 2.59513) (0.2662, 2.62168) (0.2688, 2.6472) (0.2714, 2.67174) (0.274, 2.69537) (0.2766, 2.71812) (0.2792, 2.74005) (0.2818, 2.7612) (0.2844, 2.78161) (0.287, 2.80132) (0.2896, 2.82035) (0.2922, 2.83875) (0.2948, 2.85654) (0.2974, 2.87375) (0.3, 2.89041) (0.3132, 2.96728) (0.3848, 3.23654) (0.4564, 3.3751) (0.528, 3.45365) (0.5996, 3.49996) (0.6712, 3.52699) (0.7428, 3.54147) (0.8144, 3.54724) (0.886, 3.54656) (0.9576, 3.54087) (1.0292, 3.53111) (1.1008, 3.5179) (1.1724, 3.50167) (1.244, 3.48272) (1.3156, 3.46126) (1.3872, 3.43742) (1.4588, 3.4113) (1.5304, 3.38296) (1.602, 3.35243) (1.6736, 3.31972) (1.7452, 3.28483) (1.8168, 3.24773) (1.8884, 3.20839) (1.96, 3.16674) (2.0316, 3.12273) (2.1032, 3.07628) (2.1748, 3.0273) (2.2464, 2.97567) (2.318, 2.92127) (2.3896, 2.86396) (2.4612, 2.80356) (2.5328, 2.73989) (2.6044, 2.67271) (2.676, 2.60177) (2.7476, 2.52675) (2.8192, 2.44728) (2.8908, 2.36293) (2.9624, 2.27315) (3.034, 2.17728) (3.1056, 2.07447) (3.1772, 1.96364) (3.2488, 1.84335) (3.3204, 1.71161) (3.392, 1.56553) (3.4636, 1.40064) (3.5352, 1.20928) (3.55, 1.16526) (3.554, 1.15304) (3.558, 1.14068) (3.562, 1.12816) (3.566, 1.11549) (3.57, 1.10266) (3.574, 1.08967) (3.578, 1.0765) (3.582, 1.06316) (3.586, 1.04963) (3.59, 1.03591) (3.594, 1.02199) (3.598, 1.00786) (3.602, 0.993508) (3.606, 0.978934) (3.61, 0.964123) (3.614, 0.949064) (3.618, 0.933744) (3.622, 0.918152) (3.626, 0.902272) (3.63, 0.88609) (3.634, 0.869587) (3.638, 0.852747) (3.642, 0.835548) (3.646, 0.817968) (3.65, 0.799981) (3.654, 0.78156) (3.658, 0.762673) (3.662, 0.743284) (3.666, 0.723354) (3.67, 0.702835) (3.674, 0.681676) (3.678, 0.659814) (3.682, 0.637177) (3.686, 0.61368) (3.69, 0.589219) (3.694, 0.563669) (3.698, 0.536874) (3.702, 0.508638) (3.706, 0.478706) (3.71, 0.446737) (3.714, 0.412257) (3.718, 0.374574) (3.722, 0.332601) (3.726, 0.284445) (3.73, 0.22619) (3.734, 0.14623) (3.737, 0)};
\addplot [color=gray,line width=0.8pt,smooth] coordinates {(0.1895, 0) (0.1908, -0.478626) (0.1934, -0.771288) (0.196, -0.972715) (0.1986, -1.13301) (0.2012, -1.26814) (0.2038, -1.38574) (0.2064, -1.49016) (0.209, -1.5842) (0.2116, -1.66978) (0.2142, -1.74829) (0.2168, -1.82077) (0.2194, -1.88804) (0.222, -1.95075) (0.2246, -2.00943) (0.2272, -2.06451) (0.2298, -2.11635) (0.2324, -2.16527) (0.235, -2.21154) (0.2376, -2.25538) (0.2402, -2.297) (0.2428, -2.33657) (0.2454, -2.37425) (0.248, -2.41017) (0.2506, -2.44448) (0.2532, -2.47726) (0.2558, -2.50863) (0.2584, -2.53868) (0.261, -2.56749) (0.2636, -2.59513) (0.2662, -2.62168) (0.2688, -2.6472) (0.2714, -2.67174) (0.274, -2.69537) (0.2766, -2.71812) (0.2792, -2.74005) (0.2818, -2.7612) (0.2844, -2.78161) (0.287, -2.80132) (0.2896, -2.82035) (0.2922, -2.83875) (0.2948, -2.85654) (0.2974, -2.87375) (0.3, -2.89041) (0.3132, -2.96728) (0.3848, -3.23654) (0.4564, -3.3751) (0.528, -3.45365) (0.5996, -3.49996) (0.6712, -3.52699) (0.7428, -3.54147) (0.8144, -3.54724) (0.886, -3.54656) (0.9576, -3.54087) (1.0292, -3.53111) (1.1008, -3.5179) (1.1724, -3.50167) (1.244, -3.48272) (1.3156, -3.46126) (1.3872, -3.43742) (1.4588, -3.4113) (1.5304, -3.38296) (1.602, -3.35243) (1.6736, -3.31972) (1.7452, -3.28483) (1.8168, -3.24773) (1.8884, -3.20839) (1.96, -3.16674) (2.0316, -3.12273) (2.1032, -3.07628) (2.1748, -3.0273) (2.2464, -2.97567) (2.318, -2.92127) (2.3896, -2.86396) (2.4612, -2.80356) (2.5328, -2.73989) (2.6044, -2.67271) (2.676, -2.60177) (2.7476, -2.52675) (2.8192, -2.44728) (2.8908, -2.36293) (2.9624, -2.27315) (3.034, -2.17728) (3.1056, -2.07447) (3.1772, -1.96364) (3.2488, -1.84335) (3.3204, -1.71161) (3.392, -1.56553) (3.4636, -1.40064) (3.5352, -1.20928) (3.55, -1.16526) (3.554, -1.15304) (3.558, -1.14068) (3.562, -1.12816) (3.566, -1.11549) (3.57, -1.10266) (3.574, -1.08967) (3.578, -1.0765) (3.582, -1.06316) (3.586, -1.04963) (3.59, -1.03591) (3.594, -1.02199) (3.598, -1.00786) (3.602, -0.993508) (3.606, -0.978934) (3.61, -0.964123) (3.614, -0.949064) (3.618, -0.933744) (3.622, -0.918152) (3.626, -0.902272) (3.63, -0.88609) (3.634, -0.869587) (3.638, -0.852747) (3.642, -0.835548) (3.646, -0.817968) (3.65, -0.799981) (3.654, -0.78156) (3.658, -0.762673) (3.662, -0.743284) (3.666, -0.723354) (3.67, -0.702835) (3.674, -0.681676) (3.678, -0.659814) (3.682, -0.637177) (3.686, -0.61368) (3.69, -0.589219) (3.694, -0.563669) (3.698, -0.536874) (3.702, -0.508638) (3.706, -0.478706) (3.71, -0.446737) (3.714, -0.412257) (3.718, -0.374574) (3.722, -0.332601) (3.726, -0.284445) (3.73, -0.22619) (3.734, -0.14623) (3.737, 0)};
\addplot [color=gray,line width=0.8pt,smooth] coordinates {(0.166, 0)  (0.1668, 0.0307122) (0.168, 0.506317) (0.1692, 0.711578) (0.1704, 0.866635) (0.1716, 0.99531) (0.1728, 1.10691) (0.174, 1.20623) (0.1752, 1.29616) (0.1764, 1.37858) (0.1776, 1.45481) (0.1788, 1.52581) (0.18, 1.59233) (0.1812, 1.65493) (0.1824, 1.71408) (0.1836, 1.77015) (0.1848, 1.82346) (0.186, 1.87426) (0.1872, 1.92279) (0.1884, 1.96922) (0.1896, 2.01374) (0.1908, 2.05647) (0.192, 2.09756) (0.1932, 2.13712) (0.1944, 2.17524) (0.1956, 2.21202) (0.1968, 2.24755) (0.198, 2.28188) (0.1992, 2.31511) (0.2004, 2.34727) (0.2016, 2.37844) (0.2028, 2.40866) (0.204, 2.43799) (0.2052, 2.46645) (0.2064, 2.49411) (0.2076, 2.52099) (0.2088, 2.54712) (0.21, 2.57255) (0.233, 2.95563) (0.316, 3.59067) (0.399, 3.83407) (0.482, 3.95165) (0.565, 4.01428) (0.648, 4.04838) (0.731, 4.0657) (0.814, 4.0722) (0.897, 4.07111) (0.98, 4.06436) (1.063, 4.05309) (1.146, 4.03806) (1.229, 4.01977) (1.312, 3.99852) (1.395, 3.97455) (1.478, 3.94799) (1.561, 3.91894) (1.644, 3.88745) (1.727, 3.85355) (1.81, 3.81723) (1.893, 3.77849) (1.976, 3.73729) (2.059, 3.69359) (2.142, 3.64731) (2.225, 3.59839) (2.308, 3.54673) (2.391, 3.49223) (2.474, 3.43477) (2.557, 3.37421) (2.64, 3.31039) (2.723, 3.24312) (2.806, 3.1722) (2.889, 3.09738) (2.972, 3.01838) (3.055, 2.93486) (3.138, 2.84643) (3.221, 2.75263) (3.304, 2.65288) (3.387, 2.5465) (3.47, 2.43261) (3.553, 2.3101) (3.636, 2.17754) (3.719, 2.03295) (3.802, 1.87355) (3.885, 1.69518) (3.968, 1.49105) (4., 1.40312) (4.006, 1.38593) (4.012, 1.3685) (4.018, 1.35082) (4.024, 1.33287) (4.03, 1.31465) (4.036, 1.29615) (4.042, 1.27735) (4.048, 1.25825) (4.054, 1.23881) (4.06, 1.21904) (4.066, 1.19892) (4.072, 1.17841) (4.078, 1.15752) (4.084, 1.13621) (4.09, 1.11446) (4.096, 1.09224) (4.102, 1.06952) (4.108, 1.04628) (4.114, 1.02248) (4.12, 0.99807) (4.126, 0.973013) (4.132, 0.947254) (4.138, 0.920736) (4.144, 0.893391) (4.15, 0.865141) (4.156, 0.835892) (4.162, 0.805538) (4.168, 0.773947) (4.174, 0.740962) (4.18, 0.706388) (4.186, 0.669977) (4.192, 0.631414) (4.198, 0.590275) (4.204, 0.545979) (4.21, 0.497684) (4.216, 0.444088) (4.222, 0.38297) (4.228, 0.309912) (4.234, 0.212962) (4.237, 0)};
\addplot [color=gray,line width=0.8pt,smooth] coordinates {(0.166, 0)  (0.1668, -0.0307122) (0.168, -0.506317) (0.1692, -0.711578) (0.1704, -0.866635) (0.1716, -0.99531) (0.1728, -1.10691) (0.174, -1.20623) (0.1752, -1.29616) (0.1764, -1.37858) (0.1776, -1.45481) (0.1788, -1.52581) (0.18, -1.59233) (0.1812, -1.65493) (0.1824, -1.71408) (0.1836, -1.77015) (0.1848, -1.82346) (0.186, -1.87426) (0.1872, -1.92279) (0.1884, -1.96922) (0.1896, -2.01374) (0.1908, -2.05647) (0.192, -2.09756) (0.1932, -2.13712) (0.1944, -2.17524) (0.1956, -2.21202) (0.1968, -2.24755) (0.198, -2.28188) (0.1992, -2.31511) (0.2004, -2.34727) (0.2016, -2.37844) (0.2028, -2.40866) (0.204, -2.43799) (0.2052, -2.46645) (0.2064, -2.49411) (0.2076, -2.52099) (0.2088, -2.54712) (0.21, -2.57255) (0.233, -2.95563) (0.316, -3.59067) (0.399, -3.83407) (0.482, -3.95165) (0.565, -4.01428) (0.648, -4.04838) (0.731, -4.0657) (0.814, -4.0722) (0.897, -4.07111) (0.98, -4.06436) (1.063, -4.05309) (1.146, -4.03806) (1.229, -4.01977) (1.312, -3.99852) (1.395, -3.97455) (1.478, -3.94799) (1.561, -3.91894) (1.644, -3.88745) (1.727, -3.85355) (1.81, -3.81723) (1.893, -3.77849) (1.976, -3.73729) (2.059, -3.69359) (2.142, -3.64731) (2.225, -3.59839) (2.308, -3.54673) (2.391, -3.49223) (2.474, -3.43477) (2.557, -3.37421) (2.64, -3.31039) (2.723, -3.24312) (2.806, -3.1722) (2.889, -3.09738) (2.972, -3.01838) (3.055, -2.93486) (3.138, -2.84643) (3.221, -2.75263) (3.304, -2.65288) (3.387, -2.5465) (3.47, -2.43261) (3.553, -2.3101) (3.636, -2.17754) (3.719, -2.03295) (3.802, -1.87355) (3.885, -1.69518) (3.968, -1.49105) (4., -1.40312) (4.006, -1.38593) (4.012, -1.3685) (4.018, -1.35082) (4.024, -1.33287) (4.03, -1.31465) (4.036, -1.29615) (4.042, -1.27735) (4.048, -1.25825) (4.054, -1.23881) (4.06, -1.21904) (4.066, -1.19892) (4.072, -1.17841) (4.078, -1.15752) (4.084, -1.13621) (4.09, -1.11446) (4.096, -1.09224) (4.102, -1.06952) (4.108, -1.04628) (4.114, -1.02248) (4.12, -0.99807) (4.126, -0.973013) (4.132, -0.947254) (4.138, -0.920736) (4.144, -0.893391) (4.15, -0.865141) (4.156, -0.835892) (4.162, -0.805538) (4.168, -0.773947) (4.174, -0.740962) (4.18, -0.706388) (4.186, -0.669977) (4.192, -0.631414) (4.198, -0.590275) (4.204, -0.545979) (4.21, -0.497684) (4.216, -0.444088) (4.222, -0.38297) (4.228, -0.309912) (4.234, -0.212962) (4.237, 0)};
\addplot [color=gray,line width=0.3pt] coordinates {(3.5,3.7) (2.5,1)};
\node at (axis cs: 3.5,4) {\scriptsize{$\mathcal{A}(\bar{H},\bar{L})$}};
\end{axis}
\end{tikzpicture}
\hspace{10pt}
\begin{tikzpicture}
\begin{axis}[
  tick label style={font=\scriptsize},
  axis y line=left, 
  axis x line=bottom,
  xtick={0,2.2,6},
  ytick={0,4},
  xticklabels={$0$,$\vartheta^{*}$,$2\pi$},
  yticklabels={,$\bar{L}$},
  xlabel={\small $\vartheta$},
  ylabel={\small $L$},
every axis x label/.style={
    at={(ticklabel* cs:1.0)},
    anchor=west,
},
every axis y label/.style={
    at={(ticklabel* cs:1.0)},
    anchor=south,
},
  width=6cm,
  height=4.5cm,
  xmin=0,
  xmax=7,
  ymin=0,
  ymax=6]
\draw[->] [color=black,line width=0.9pt] (axis cs: 1.45, 4)--(axis cs: 1.5, 4);
\draw[->] [color=black,line width=0.9pt] (axis cs: 4.45, 4)--(axis cs: 4.5, 4);
\addplot [color=gray,line width=0.8pt,smooth] coordinates {(0,1) (6,1)};
\addplot [color=gray,line width=0.8pt,smooth] coordinates {(0,2) (6,2)};
\addplot [color=gray,line width=0.8pt,smooth] coordinates {(0,3) (6,3)};
\addplot [color=black,line width=0.9pt,smooth] coordinates {(0,4) (6,4)};
\addplot [color=gray,line width=0.8pt,smooth] coordinates {(0,5) (6,5)};
\addplot [color=gray,line width=0.3pt,dashed] coordinates {(6,0) (6,5.5)};
\addplot [color=gray,line width=0.3pt,dashed] coordinates {(2.2,0) (2.2,5.5)};
\end{axis}
\end{tikzpicture}
\vspace{10pt}
\\
\begin{tikzpicture}
\begin{axis}[
  axis equal image,
  tick label style={font=\scriptsize},
  axis y line=middle, 
  axis x line=middle,
  xtick={0},
  ytick={0},
  xticklabels={},
  yticklabels={$0$},
  xlabel={},
  ylabel={},
every axis x label/.style={
    at={(ticklabel* cs:1.0)},
    anchor=west,
},
every axis y label/.style={
    at={(ticklabel* cs:1.0)},
    anchor=south,
},
  width=8cm,
  height=6cm,
  xmin=-8,
  xmax=8,
  ymin=-6,
  ymax=6]
\draw [color=gray, line width=0.3pt] (axis cs: 0,0)--(axis cs: -4.5,4.5);
\draw [color=gray, line width=0.3pt] (axis cs: 0,0)--(axis cs: -1,-1.73205);
\node at (axis cs: -5.7,-5) {\scriptsize{$r_{-}(\bar{H},\bar{L}) e^{i \psi}$}};
\addplot [color=gray,line width=0.3pt] coordinates {(-5.7,-4.5) (-0.59,-1.02191)};
\fill (axis cs: -0.59,-1.02191) circle (1pt);
\node at (axis cs: -5.7,5) {\scriptsize{$r^{*} e^{i \vartheta^{*}}$}};
\addplot [color=gray,line width=0.3pt] coordinates {(-5.7,4.5) (-3.2,3.2)};
\fill (axis cs: -3.2,3.2) circle (1pt);
\node at (axis cs: 6,-0.8) {\scriptsize{$r_{+}(\bar{H},\bar{L})$}};
\draw [color=black, densely dashed, line width=0.3pt] (axis cs: 0,0) circle (12.1pt);
\draw [color=black, densely dashed, line width=0.3pt] (axis cs: 0,0) circle (57pt);
\draw[->] [color=black,line width=0.8pt] (axis cs: -0.5,3.32)--(axis cs: -0.55, 3.34);
\addplot [data cs=polar, gray, dashed, domain=0:1800, samples=360, smooth] (x,{2.41/(0.801234*cos(3*x/5)+1.24444)});
\addplot [data cs=polar, black, line width=0.9pt, domain=550:880, samples=360, smooth] (x,{2.41/(0.801234*cos(3*x/5)+1.24444)});
\end{axis}
\end{tikzpicture}
\captionof{figure}{The two figures on the top provide a graphical representation of the torus $\mathcal{T}_{(\bar{H},\bar{L})}^{0}$; precisely $\mathcal{T}_{(\bar{H},\bar{L})}^{0}$ can be meant as the cartesian product between the closed orbit of energy $\bar{H}$ in the $(r, \dot r; \bar{L})$-plane (on the left) and the circle $L=\bar{L}$ in the $(L,\vartheta)$-cylinder (on the right). The figure at the bottom shows the behaviour of a solution $x$ in the plane. Notice that $\psi$ is the angle, measured in the counter-clockwise sense from $\vartheta=0$, of the greatest non-positive instant $t$ in which $x$ lies at the pericenter.
}
\label{fig-02}
\end{figure}
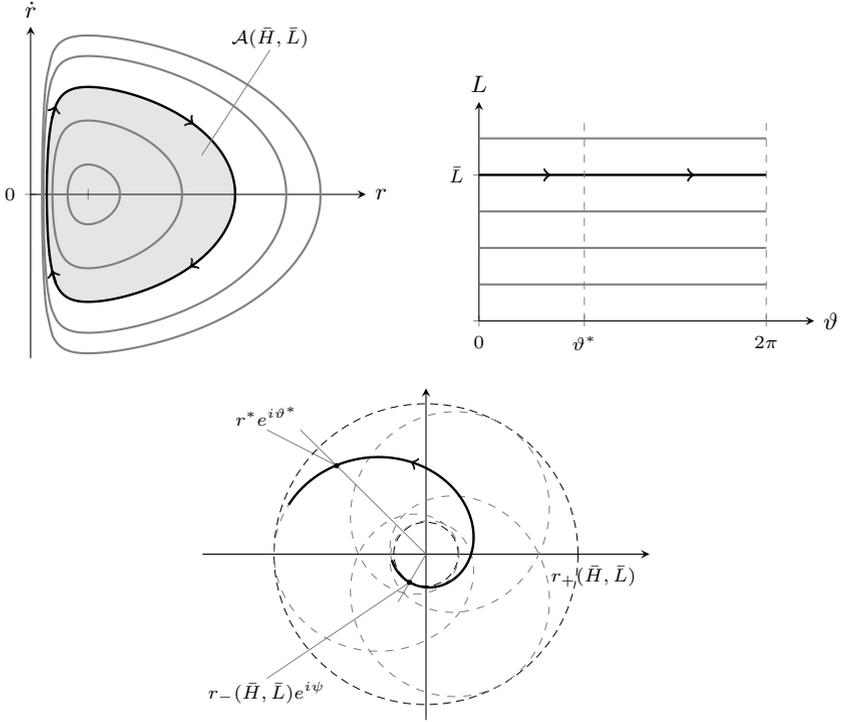
 
The action variables $(I_{1},I_{2})$ are defined as
\begin{equation}\label{def-actions}
I_{1} = \dfrac{1}{2\pi} \mathcal{A}(H,L) + L, \qquad I_{2} =L,
\end{equation}
where $\mathcal{A}(H,L)$ is the area of the bounded region enclosed by the orbit passing through $(r_{\pm}(H,L),0)$ in the $(r,\dot r)$-plane. 
Of course $(I_{1},I_{2})$ are constant on $\mathcal{T}_{(H,L)}^{0}$. 

As for the angles, we take
\begin{equation}\label{def-angles}
\varphi_{1} = 2\pi \dfrac{\mu}{T(H,L)}, \qquad \varphi_{2} = ( \Theta(H,L)-2\pi ) \dfrac{\mu}{T(H,L)} + \psi.
\end{equation}
In the above formula
\begin{itemize} 
\item $\mu \in [0,T(H,L))$ is the time needed to reach the point $(r,\dot r)$ moving in the $(r,\dot r)$-plane along the orbit starting at $(r_{-}(H,L),0)$;
\item $\psi \in [0,2\pi)$ is the angle, measured in the counter-clockwise sense from $\vartheta=0$, of the greatest non-positive instant $t$ in which $|x(t)|=r_{-}(H,L)$, where $x$ is the solution with initial condition $(x(0),p(0))$ determined by $(r, \dot r, \vartheta, L)$, see Figure~\ref{fig-02}.
\end{itemize}
Notice that, in the definition of $\varphi_2$, possible discontinuities of the angle $\psi$ are remedied by the angle defined by $\mu$; in this way, $\varphi_1$ and $\varphi_2$ are continuously well-defined as variables in $\mathbb{T} = \mathbb{R}/2\pi \mathbb{Z}$.

The above construction of the action-angle coordinates can be done starting from every torus $\mathcal{T}^{0}_{(\tilde{H},\tilde{L})}$ with $(\tilde{H},\tilde{L})$ in a neigbourhood of $(H,L)$. Therefore, the transformation
\begin{equation*}
\Sigma\colon (x,p) \mapsto (I_{1},I_{2},\varphi_{1},\varphi_{2})
\end{equation*}
is well-defined for $(x,p)$ in a neighbourhood of the fixed torus $\mathcal{T}_{(H,L)}^{0}$ and it is well-known that it provides a symplectic diffeomorphism from this set onto its image.

In the new variables, system \eqref{hamilt-syst} takes the simpler form
\begin{equation}\label{hamilt-act-ang}
\dot I_{1} = 0, \qquad \dot I_{2} = 0, 
\qquad \dot \varphi_{1} = \partial_{I_{2}} \mathcal{K}(I_{1},I_{2}), 
\qquad \dot \varphi_{2} = \partial_{I_{1}} \mathcal{K}(I_{1},I_{2}), 
\end{equation}
where 
\begin{equation}\label{def-k}
\mathcal{K}(I_{1},I_{2}) = \mathcal{H}(\Sigma^{-1}(I_{1},I_{2},\varphi_{1},\varphi_{2})),
\end{equation}
meaning that in the new coordinates the Hamiltonian $\mathcal{K}$ depends only on the action variables. 

\begin{remark}\label{rem-2.2}
A similar scenario holds for central force problems driven by the relativistic operator, namely
\begin{equation*}
\frac{\mathrm{d}}{\mathrm{d}t}\left( \frac{\dot x}{\sqrt{1 - |\dot x|^{2}/c^{2}}}\right) = V'(|x|) \frac{x}{|x|},
\end{equation*}
with $c > 0$ denoting the speed of light. Indeed, the above system can be written in the Hamiltonian form \eqref{hamilt-syst} with
\begin{equation*}
\mathcal{H}(x,p) = c^{2} \sqrt{1 + \dfrac{|p|^{2}}{c^{2}}} - V(|x|);
\end{equation*}
moreover, the angular momentum defined in \eqref{def-angh} is still a first integral.
Introducing the \emph{linear momentum}
\begin{equation*}
l = \Big{\langle} \dfrac{x}{|x|}, p \Big{\rangle},
\end{equation*}
for $x = r e^{i\vartheta}$, one obtains
\begin{equation*}
\begin{cases}
\, \dot r = \dfrac{l}{\sqrt{1+\dfrac{l^{2}+L^{2}/r^{2}}{c^{2}}}}, \vspace{7pt}\\
\, \dot l = \dfrac{L^{2}}{r^{3}} \dfrac{1}{\sqrt{1+\dfrac{l^{2}+L^{2}/r^{2}}{c^{2}}}} - V'(r), \vspace{7pt}\\
\, \dot \vartheta = \dfrac{L}{r^{2}}\dfrac{1}{\sqrt{1+\dfrac{l^{2}+L^{2}/r^{2}}{c^{2}}}}, 
\end{cases}
\end{equation*}
where $L$ is the angular momentum.
Then, one can first study the dynamics in the $(r,l)$-plane (with $L$ fixed) so as to define, for closed orbits, the radial 
period $T_{\mathrm{rel}}(H,L)$. Using the equation for $\dot \vartheta$, the apsidal angle $\Theta_{\mathrm{rel}}(H,L)$ can thus be defined as in \eqref{def-Theta-HL}. The construction of the action-angle variables is exactly the same.

The above analysis has been carried out in detail in \cite{BoDaFe-pp}, dealing with the relativistic Kepler problem
\begin{equation*}
\frac{\mathrm{d}}{\mathrm{d}t}\left( \frac{\dot x}{\sqrt{1 - |\dot x|^{2}/c^{2}}}\right) = -\kappa \, \frac{x}{|x|^{3}},
\end{equation*}
where $\kappa > 0$. In particular, explicit formulas for the radial period and the apsidal angle can be provided, precisely
\begin{equation}\label{formule-keprel}
T_{\mathrm{rel}}(H,L) = \dfrac{2\pi \kappa c^{3}}{( c^{4} - H^{2} )^{\frac{3}{2}}},
\qquad
\Theta_{\mathrm{rel}}(H,L) = \dfrac{2\pi}{\sqrt{1 - \dfrac{\kappa^{2}}{c^{2} L^{2}}}}.
\end{equation}
See also Remark~\ref{rem-relativistic}.
\hfill$\lhd$
\end{remark}

\section{The main result}\label{section-3}

In this section, we state and prove our main result for system
\begin{equation}\label{eq-main}
\ddot x = V'(|x|) \frac{x}{|x|} + \varepsilon \,\nabla_x U(t,x), \quad x \in \mathbb{R}^{2} \setminus \{0\},
\end{equation}
where $\varepsilon\in\mathbb{R}$, $V\colon (0,+\infty) \to \mathbb{R}$ is $\mathcal{C}^{2}$ and $U\colon \mathbb{R} \times (\mathbb{R}^{2} \setminus \{0\}) \to \mathbb{R}$ is continuous, $\tau$-periodic in the first variable, and differentiable with respect to the second variable with $\nabla_{x} U$ continuous in $(t,x)$.

Using the notation of Section~\ref{section-2}, for a energy-angular momentum pair $(H,L)\in\Lambda$ we set
\begin{equation*}
\mathcal{D}(H,L) = \partial_{H} T(H,L) \cdot \partial_{L} \Theta(H,L) - \partial_{L} T(H,L) \cdot \partial_{H} \Theta(H,L).
\end{equation*}
Notice that the functions $T$ and $\Theta$ are of class $\mathcal{C}^{1}$ (see Remark~\ref{rem-regTTheta}) and so $\mathcal{D}$ is a well-defined continuous function.

With the above notation and recalling the definition of the invariant torus $\mathcal{T}_{(H,L)}^{0}$ given in \eqref{def-t0}, the following theorem can be stated.

\begin{theorem}\label{th-main}
Let us assume that, for some $(H^{*},L^{*})\in\Lambda$, the torus $\mathcal{T}_{(H^{*},L^{*})}^{0}$ is filled by periodic solutions of minimal period $\tau$.
If, moreover
\begin{equation}\label{cond-d}
\mathcal{D}(H^{*},L^{*}) \neq 0,
\end{equation}
then there exists $\varepsilon^{*} > 0$ such that, for every $\varepsilon\neq 0$ with $| \varepsilon | < \varepsilon^{*}$, system \eqref{eq-main} has at least three $\tau$-periodic solutions bifurcating from the torus $\mathcal{T}_{(H^{*},L^{*})}^{0}$.
\end{theorem}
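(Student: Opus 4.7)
The plan is to pass to the action-angle coordinates $(I_{1},I_{2},\varphi_{1},\varphi_{2})$ constructed in Section~\ref{section-2.2}, read \eqref{eq-main} as a $\mathcal{C}^{1}$-small perturbation of an integrable Hamiltonian system, and apply the higher-dimensional Poincar\'{e}--Birkhoff theorem of Fonda--Ure\~{n}a \cite{FoUr-17} to the corresponding time-$\tau$ Poincar\'{e} map. Under the symplectic diffeomorphism $\Sigma$, system \eqref{eq-main} becomes the Hamiltonian system generated by $\mathcal{K}(I) + \varepsilon\,\widetilde{U}(t,I,\varphi)$, where $\widetilde{U}$ is $\mathcal{C}^{1}$, $\tau$-periodic in $t$ and $2\pi$-periodic in each $\varphi_{j}$. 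The assumption that $\mathcal{T}^{0}_{(H^{*},L^{*})}$ is filled with $\tau$-periodic solutions of minimal period $\tau$ means the existence of coprime positive integers $n,k$ such that $T(H^{*},L^{*}) = \tau/n$ and $\Theta(H^{*},L^{*}) = 2\pi k/n$, which in action-angle language reads $\tau\,\nabla \mathcal{K}(I^{*}) \in 2\pi\mathbb{Z}^{2}$, so that the unperturbed time-$\tau$ map fixes the whole torus $\{I = I^{*}\}$ pointwise.

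The crucial algebraic step is to translate hypothesis \eqref{cond-d} into the KAM-type non-degeneracy $\det \nabla^{2}\mathcal{K}(I^{*}) \neq 0$. Differentiating under the integral sign in the formula $\mathcal{A}(H,L) = \sqrt{2}\int_{r_{-}}^{r_{+}}\sqrt{2(H-W(r;L))}\,\mathrm{d}r$ (boundary terms drop out since $H = W(r_{\pm};L)$) yields the identities $\partial_{H}\mathcal{A} = T$ and $\partial_{L}\mathcal{A} = -\Theta$. Via \eqref{def-actions}, the Jacobian $\partial(I_{1},I_{2})/\partial(H,L)$ is upper-triangular with determinant $T/(2\pi) \neq 0$, and the identity $\mathcal{K}(I_{1},I_{2}) = H$ combined with the inverse Jacobian gives both components of $\nabla \mathcal{K}$ as explicit rational expressions in $T$ and $\Theta$. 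A direct chain-rule computation then produces
\[
\det \nabla^{2} \mathcal{K}(I^{*}) = -\frac{4\pi^{2}}{T(H^{*},L^{*})^{4}}\, \mathcal{D}(H^{*},L^{*}),
\]
so that \eqref{cond-d} is precisely the non-degeneracy $\det \nabla^{2}\mathcal{K}(I^{*}) \neq 0$.

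With non-degeneracy available, I would set up a thin solid torus $A_{\delta} = \{|I - I^{*}| \leq \delta\}\times \mathbb{T}^{2}$ on which the unperturbed time-$\tau$ flow exhibits genuine twist: by the implicit function theorem $I \mapsto \nabla \mathcal{K}(I)$ is a local diffeomorphism at $I^{*}$, so distinct values of $I$ produce distinct rotation vectors, and the twist/linking condition required by \cite{FoUr-17} is satisfied on the two boundary tori of $A_{\delta}$. For $|\varepsilon|$ sufficiently small, the Poincar\'{e} map of \eqref{eq-main} is a $\mathcal{C}^{1}$-small exact-symplectic perturbation of this twist map on $A_{\delta}$, and the Fonda--Ure\~{n}a theorem then produces at least $\mathrm{cat}(\mathbb{T}^{2}) = 3$ fixed points; pulling back by $\Sigma^{-1}$ yields three $\tau$-periodic solutions of \eqref{eq-main} bifurcating from $\mathcal{T}^{0}_{(H^{*},L^{*})}$. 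The main obstacle is the careful calibration of $\delta = \delta(\varepsilon)$ so that the boundary twist survives the perturbation while the orbits remain in the domain on which $\Sigma$ is a diffeomorphism (in particular, away from circular solutions and from the collision set $x = 0$); this should be handled by standard a priori energy estimates together with the local invertibility of the frequency map, in the spirit of \cite{FoGaGi-16, BoDaFe-pp}.
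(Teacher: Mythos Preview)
Your proposal is correct and follows the same overall strategy as the paper: pass to action-angle variables, check the resonance condition $\tau\,\nabla\mathcal{K}(I^{*})\in 2\pi\mathbb{Z}^{2}$ and the non-degeneracy $\det\nabla^{2}\mathcal{K}(I^{*})\neq 0$, then invoke the higher-dimensional Poincar\'{e}--Birkhoff result (the paper cites the packaged version \cite[Theorem~3.1]{BoDaFe-pp} rather than setting up the twist directly, but this is only cosmetic).

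The one genuine difference is in how the key algebraic identity linking $\det\nabla^{2}\mathcal{K}$ and $\mathcal{D}$ is obtained. The paper reads off $\tau\,\nabla\mathcal{K}$ dynamically, by computing the increments $\varphi_{i}(\tau)-\varphi_{i}(0)$ from the explicit angle coordinates \eqref{def-angles}, and then differentiates the resulting map $\Phi(H,L)=(2\pi\tau/T,\,(\Theta-2\pi)\tau/T)$. You instead use the classical generating-function identities $\partial_{H}\mathcal{A}=T$ and $\partial_{L}\mathcal{A}=-\Theta$, invert the action map $\Psi$ directly, and compute the Hessian by the chain rule; this gives the same formula $\det\nabla^{2}\mathcal{K}=-4\pi^{2}\mathcal{D}/T^{4}$ (the paper leaves this constant implicit). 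Your route is arguably cleaner since it bypasses the explicit form of the angle variables altogether; the paper's route, on the other hand, avoids having to justify differentiating the singular area integral in $L$ (only $\partial_{H}\mathcal{A}=T$ is cited from \cite{Ar-89}). One minor slip: your displayed formula for $\mathcal{A}$ has a coefficient $\sqrt{2}$ where it should be $2$ (the enclosed area is $2\int_{r_{-}}^{r_{+}}\sqrt{2(H-W)}\,\mathrm{d}r$), but since the stated conclusions $\partial_{H}\mathcal{A}=T$, $\partial_{L}\mathcal{A}=-\Theta$ are correct this does not affect the argument.
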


A couple of remarks about the statement are in order. 
First of all, we recall that, as discussed in Section~\ref{section-2}, a necessary and sufficient condition for the torus $\mathcal{T}_{(H^{*},L^{*})}^{0}$ to be filled by periodic solutions is
\begin{equation}\label{s-star}
\Theta(H^{*},L^{*}) = 2\pi \frac{k}{n} 
\end{equation}
for some coprime positive integers $n$ and $k$; in such a case, we called the corresponding solutions of type $(n,k)$, meaning that their winding number around the 
origin is $k$ and the winding number of $(r,\dot r)$ around the point $(r_{0}(L^{*}),0)$ is $n$. Hence, in Theorem~\ref{th-main} we are implicitly requiring that $\tau = n T(H^{*},L^{*})$.

Second, we comment on the expression \emph{bifurcating from the torus} $\mathcal{T}_{(H^{*},L^{*})}^{0}$. By this, we mean that each of the above solutions $x_\varepsilon$ remains arbitrarily close, for $\varepsilon \to 0$, to a solution $x_{0}$ of the unperturbed system with
$(x_{0}(0),\dot x_{0}(0)) \in \mathcal{T}_{(H^{*},L^{*})}^{0}$. In particular, for $\varepsilon$ small enough, the solutions $x_\varepsilon$ have winding number around the origin equal to $k$. We also remark that, by changing the sign of $L$, other three solutions (with negative winding number $-k$) can be provided.

\begin{proof}
The proof relies on a perturbation theorem for completely integrable Hamiltonian systems proved in \cite[Section~2]{FoGaGi-16}; more precisely, we are going to apply a simplified version of this result as discussed in \cite[Section~3.1]{BoDaFe-pp}. 
To this end, we first write system \eqref{eq-main} in Hamiltonian form as
\begin{equation}\label{eq-he}
\dot x = \nabla_{p}\mathcal{H}_\varepsilon(x,p), \qquad \dot p = -\nabla_{x}\mathcal{H}_\varepsilon(x,p),
\end{equation}
where
\begin{equation*}
\mathcal{H}_\varepsilon(x,p) = \frac{1}{2}|p|^{2} - V(|x|) - \varepsilon \,U(t,x),
\end{equation*}
and we then pass to action-angle variables $(I_1,I_2,\varphi_1,\varphi_2) = \Sigma(x,p)$ in a neighborhood of the torus $\mathcal{T}_{(H^{*},L^{*})}^{0}$, as described in Section~\ref{section-2.2}.
System \eqref{eq-he} thus takes the form
\begin{equation}\label{hamilt-pert}
\begin{cases}
\, \dot I_1 = -\varepsilon \, \partial_{\varphi_1}\mathcal{R}(t,I_1,I_2,\varphi_1,\varphi_2), \\
\, \dot I_2 = -\varepsilon \, \partial_{\varphi_2}\mathcal{R}(t,I_1,I_2,\varphi_1,\varphi_2), \\
\, \dot \varphi_1 = \partial_{I_1}\mathcal{K}(I_1,I_2) + \varepsilon \, \partial_{I_1}\mathcal{R}(t,I_1,I_2,\varphi_1,\varphi_2), \\
\, \dot \varphi_2 = \partial_{I_2}\mathcal{K}(I_1,I_2) + \varepsilon \, \partial_{I_2}\mathcal{R}(t,I_1,I_2,\varphi_1,\varphi_2),
\end{cases}
\end{equation}
where $\mathcal{K}$ is defined in \eqref{def-k} and 
\begin{equation*}
\mathcal{R}(t,I_1,I_2,\varphi_1,\varphi_2) = U(t,x(I_1,I_2,\varphi_1,\varphi_2)),
\end{equation*}
with $x(I_1,I_2,\varphi_1,\varphi_2)$ the two-dimensional vector made by the first two components of the four-dimensional vector $\Sigma^{-1}(I_1,I_2,\varphi_1,\varphi_2)$. 
System \eqref{hamilt-pert} looks like a perturbation, for $\varepsilon \neq 0$ and small, of system \eqref{hamilt-act-ang}. According to \cite[Theorem~3.1]{BoDaFe-pp}, to prove our result we thus need to check that
conditions
\begin{equation}\label{cond-1}
\tau \nabla \mathcal{K}(I_1^{*},I_2^{*}) \in 2\pi\mathbb{Z}^{2}
\end{equation}
and 
\begin{equation}\label{cond-2}
\mathrm{det\,} \nabla^{2} \mathcal{K}(I_1^{*},I_2^{*}) \neq 0
\end{equation}
are satisfied, where $(I_1^{*},I_2^{*})$ are the actions corresponding to $(H^{*},L^{*})$ via \eqref{def-actions}.

In order to simplify the computations, from now on system \eqref{hamilt-pert} is meant on the covering space, that is, the angles $\varphi_1, \varphi_2$ are allowed take values in the whole real line. With this in mind, recalling \eqref{hamilt-act-ang}, and in particular $\dot \varphi_i = \partial_{I_i} \mathcal{K}(I_{1},I_{2})$ for $i=1,2$, we deduce that
\begin{equation}\label{gradiente-angolo}
 (\varphi_{1}(\tau)-\varphi_{1}(0), \varphi_{2}(\tau)-\varphi_{2}(0)) = \tau \nabla \mathcal{K}(I_{1},I_{2}).
\end{equation}
On the other hand, from the explicit expression of the angle variables
in \eqref{def-angles} we have that
\begin{align*}
& \varphi_1(\tau) - \varphi_1(0) = 2\pi \dfrac{\tau}{T(H,L)},
\\
& \varphi_2(\tau) - \varphi_2(0) = ( \Theta(H,L) - 2\pi ) \dfrac{\tau}{T(H,L)}. 
\end{align*}

According to \eqref{s-star}, solutions on $\mathcal{T}_{(H^{*},L^{*})}^{0}$ are periodic of type $(n,k)$ with minimal period $\tau = n T(H^{*},L^{*})$. 
For such solutions, we thus have
\begin{equation*}
(\varphi_{1}(\tau)-\varphi_{1}(0), \varphi_{2}(\tau)-\varphi_{2}(0))= 2\pi (n, k-n).
\end{equation*}
Recalling \eqref{gradiente-angolo}, we infer
\begin{equation*}
\tau \nabla \mathcal{K}(I_{1}^{*},I_{2}^{*}) = 2\pi (n, k-n),
\end{equation*}
so that \eqref{cond-1} is verified.

As for condition \eqref{cond-2}, we first introduce the notation
\begin{equation*}
\Phi(H,L) = \biggl{(} 2\pi \dfrac{\tau}{T(H,L)}, ( \Theta(H,L)-2\pi )\dfrac{\tau}{T(H,L)} \biggr{)}
\end{equation*}
and
\begin{equation*}
\Psi (H,L) = \biggl{(} \dfrac{1}{2\pi} \mathcal{A}(H,L) + L, L \biggr{)}.
\end{equation*}
The map $\Phi$ is of class $\mathcal{C}^{1}$ since $T$ and $\Theta$ are so. Moreover, the regularity of $\Sigma^{-1}$ implies that map $\Psi$ is of class $\mathcal{C}^{1}$ as well. As proved in \cite[p.~282]{Ar-89}, $\partial_{H} \mathcal{A}(H,L)=T(H,L)$ and so
\begin{equation*}
\mathrm{det\,} \mathrm{D}\Psi(H,L) = 
\mathrm{det} 
\begin{pmatrix}
\dfrac{\partial_{H} \mathcal{A}(H,L)}{2\pi} & \dfrac{\partial_{L} \mathcal{A}(H,L)}{2\pi} + 1 \vspace{6pt} \\
0 & 1
\end{pmatrix} 
 \neq 0.
\end{equation*}
Since $\Psi$ is a one-to-one map, we have $\mathrm{det\,} \mathrm{D}\Psi^{-1} \neq 0$ as well.

According to this notation and the previous computations, we deduce
\begin{equation*}
\Phi(\Psi^{-1}(I_{1},I_{2})) = \tau \nabla \mathcal{K}(I_{1},I_{2}).
\end{equation*}
By differentiating, we find
\begin{equation*}
\nabla^{2} K(I_{1},I_{2}) 
= \dfrac{1}{\tau} \; \mathrm{D}(\Phi(\Psi^{-1}(I_{1},I_{2})))
= \dfrac{1}{\tau} \; \mathrm{D} \Phi\Big{\vert}_{(H,L) = \Psi^{-1}(I_1,I_2)} \; \mathrm{D} \Psi^{-1}(I_{1},I_{2})
\end{equation*}
and, consequently,
\begin{equation*}
\mathrm{det\,} \nabla^{2} K(I_{1},I_{2}) =
\dfrac{1}{\tau^{2}} \,
\mathrm{det\,} \mathrm{D} \Phi\Big{\vert}_{(H,L) = \Psi^{-1}(I_1,I_2)} \cdot 
\mathrm{det\,} \mathrm{D}\Psi^{-1}(I_{1},I_{2}).
\end{equation*}
Since $\mathrm{D}\Psi^{-1}(I_{1},I_{2})$ is invertible, to prove \eqref{cond-2} we just need to verify that 
\begin{equation*}
\mathrm{det\,} \mathrm{D} \Phi(H^{*},L^{*}) \neq 0.
\end{equation*}
By direct computations (and omitting the dependence on $(H,L)$ to simplify the notation), we have
\begin{equation*}
\mathrm{D} \Phi(H,L) = \dfrac{\tau}{T}
\begin{pmatrix}
-2\pi\dfrac{\partial_{H}T}{T} 
& -2\pi\dfrac{\partial_{L}T}{T}
\vspace{2pt}\\ 
\partial_{H} \Theta - (\Theta-2\pi )\dfrac{\partial_{H}T}{T} 
& \partial_{L} \Theta - (\Theta-2\pi )\dfrac{\partial_{L}T}{T}
\end{pmatrix},
\end{equation*}
so that
\begin{align*}
&\mathrm{det\,} \mathrm{D}\Phi(H,L) =
\\
&= -2\pi \dfrac{\tau^{2}}{(T(H,L))^{3}} 
\bigl{[} \partial_{H}T(H,L)\cdot\partial_{L}\Theta(H,L) - \partial_{L}T(H,L)\cdot\partial_{H}\Theta(H,L)\bigr{]} 		
\\
&= -2\pi \dfrac{\tau^{2}}{(T(H,L))^{3}} \mathcal{D}(H,L).
\end{align*}
Using assumption \eqref{cond-d}, the conclusion follows.
\end{proof}

\begin{remark}\label{rem-3.1}
We observe that, with no changes in the proof, the result is still valid if the perturbation term $U$ depends also on $\varepsilon$, that is 
$U = U(t,x,\varepsilon)$, provided that $U$ and $\nabla_x U$ remain bounded as $\varepsilon \to 0$. Moreover, by the local nature of the result, we could assume that $U$ (as well as $V$ itself) is defined only on a neighborhood of the planar annulus 
$\{ x \in \mathbb{R}^{2} \colon r_{-}(H^{*},L^{*}) \leq |x| \leq r_{+}(H^{*},L^{*}) \}$.
Finally, we mention that the fact that the periodic solutions filling the torus $\mathcal{T}_{(H^{*},L^{*})}^{0}$ have \emph{minimal} period equal to $\tau$ is unnecessary. In a similar way, one could bifurcate from tori made by solutions with smaller minimal period, namely $\tau = \ell n T(H^{*},L^{*})$ for some $\ell \geq 2$. 
All these observations will be crucial in Section~\ref{section-5}.
\hfill$\lhd$
\end{remark}

\begin{remark}\label{rem-relativistic}
A version of Theorem~\ref{th-main} for perturbation of relativistic central force fields, namely
\begin{equation*}
\frac{\mathrm{d}}{\mathrm{d}t}\left( \frac{\dot x}{\sqrt{1 - |\dot x|^{2}/c^{2}}}\right) = V'(|x|) \frac{x}{|x|} + \varepsilon \,\nabla_x U(t,x),
\end{equation*}
can be provided, as well. Indeed, as explained in Remark~\ref{rem-2.2} the construction of the action-angle variables for the unperturbed problem is exactly the same; accordingly, periodic solutions bifurcating from an invariant torus of energy $H^{*}$ and angular momentum $L^{*}$ can be found whenever condition \eqref{cond-d} is satisfied, with radial period $T_{\mathrm{rel}}$ and apsidal angle $\Theta_{\mathrm{rel}}$
as in Remark~\ref{rem-2.2}. 

In particular, for the Kepler potential $V(|x|) = \kappa/|x|$ (with $\kappa > 0$), from the explicit formulas \eqref{formule-keprel} one immediately obtains $\mathcal{D}(H,L) \neq 0$. Thus, the relativistic variant of Theorem~\ref{th-main} applies, in agreement with the analysis performed in \cite{BoDaFe-pp}, requiring instead the explicit computations of the Hamiltonian in action-angle coordinates.
\hfill$\lhd$
\end{remark}

\section{Some examples}\label{section-4}

In this section, we present some applications of Theorem~\ref{th-main} to various central force problems of physical interest.
More precisely, in Section~\ref{section-4.1} we deal with the (perturbed) Levi-Civita equation
\begin{equation}\label{eq-lc}
\ddot x = -\kappa \dfrac{x}{|x|^{3}} - 2\lambda \dfrac{x}{|x|^4} + \varepsilon \,\nabla_x U(t,x), \quad \kappa,\lambda > 0,
\end{equation}
introduced in \cite{LeCi-28} as a relativistic correction of the Kepler problem (see also \cite{AmBe-90,FoGa-17} and the references therein).
In Section~\ref{section-4.2} we analyze the (perturbed) homogeneous central force problem 
\begin{equation}\label{eq-hom}
\ddot x = -\kappa \dfrac{x}{|x|^{\alpha + 2}} + \varepsilon \,\nabla_x U(t,x), \quad \kappa > 0,\, \alpha < 2, \,\alpha \notin \{-2,1\},
\end{equation}
modeling sublinear/superlinear oscillators when $\alpha \leq -1$ (cf.~\cite{BaBe-84,MaWi-89}) and generalizing the Kepler problem when $\alpha \in (-1,2)$ (cf.~\cite{AmCo-89,McG-81}). 
Finally, in Section~\ref{section-4.3} we consider the (perturbed) Lennard-Jones equation
\begin{equation}\label{eq-lj}
\ddot x = -24 \ee \sigma^6 \frac{x}{|x|^8} + 48 \ee \sigma^{12} \frac{x}{|x|^{14}} + \varepsilon \,\nabla_x U(t,x), 
\quad \ee,\sigma > 0,
\end{equation}
introduced in \cite{LJ-31} to describe intermolecolar interactions (see also \cite{Bi-02,CLPC-04} and the references therein).

For all the above problems, we are going to show that Theorem~\ref{th-main} can be applied by checking that condition \eqref{cond-d}
holds true for the unperturbed problem. In Section~\ref{section-4.1} this is done by explicit computations for the functions $T$ and $\Theta$; in Section~\ref{section-4.2}, instead, explicit formula cannot be provided and we take advantage of the homogeneity of the problem to prove that the sign of $\mathcal{D}(H,L)$ is the same as the one of $\partial_{H} \Theta(H,L)$, which is studied in \cite{Ca-15,Ro-18}.
The application in Section~\ref{section-4.3} is more involved: in this case, we are able to prove the result only for values of the energy near the minimum, by developing careful asymptotic expansions for the maps $T$ and $\Theta$ and their derivatives (see Appendix~\ref{appendix-A}).

We stress that, in the following, we are just going to prove that condition~\eqref{cond-d} is satisfied, without caring about the minimal period of the invariant torus from which the bifurcation occurs. Thus, our next applications have to be meant in the following way: fixed an invariant torus filled by periodic solutions, with minimal period $\tau$, of the unperturbed problem, at least six $\tau$-periodic solutions (three with positive angular momentum and three with negative angular momentum) exist if a sufficiently small perturbation term, periodic in time with the same period $\tau$, is added.
The fact that tori filled by periodic solutions always exists is guaranteed since $\Theta$ is not constant.

Of course, one could also try to study, for a period $\tau$ fixed in advance, if $\tau$-periodic solutions of the unperturbed problem actually exist (and how many) and then bifurcate from the associated invariant torus. This requires however a study of the range of the functions $T$ and $\Theta$ which we will provide only for the homogeneous potential, see Section~\ref{section-5.1} and Section~\ref{section-5.2}.

\subsection{Levi-Civita potential}\label{section-4.1}

Let us consider the potential
\begin{equation*}
V(r)=\dfrac{\kappa}{r}+\dfrac{\lambda}{r^{2}},
\end{equation*}
where $\kappa,\lambda>0$, giving rise to \eqref{eq-lc}.

Recalling the notation introduced in Section~\ref{section-2}, for every $L > 0$ we have
\begin{align*}
&W(r;L) = \dfrac{L^{2}}{2r^{2}} - \dfrac{\kappa}{r} - \dfrac{\lambda}{r^{2}}
= \dfrac{1}{2r^{2}} \bigl{(} - 2 \kappa r + L^{2} - 2 \lambda \bigr{)},
\\
&W'(r;L) = - \dfrac{L^{2}}{r^{3}} + \dfrac{\kappa}{r^{2}} + \dfrac{2 \lambda}{r^{3}}
= \dfrac{1}{r^{3}} \bigl{(} \kappa r + 2 \lambda - L^{2} \bigr{)}.
\end{align*}
Therefore, $W'(\cdot;L)$ changes sign (once) in $(0,+\infty)$ if and only if
\begin{equation*}
L^{2} > 2 \lambda.
\end{equation*}
In this case $W(\cdot;L)$ has a unique strict global minimum at
\begin{equation*}
r_{0}(L) = \dfrac{L^{2}-2\lambda}{\kappa},
\end{equation*}
with
\begin{equation*}
w_{0}(L) = -W(r_{0}(L);L) = \dfrac{\kappa^{2}}{2(L^{2}-2\lambda)} > 0.
\end{equation*}
Moreover, $W'(\cdot;L)<0$ in $(0,r_{0}(L))$, and $W'(\cdot;L)>0$ in $(r_{0}(L),+\infty)$.
Therefore condition $(h_{W})$ is satisfied. Observing, moreover, that 
\begin{equation*}
\lim_{r\to0^{+}} W(r;L) = +\infty, \qquad \lim_{r\to+\infty} W(r;L) = 0,
\end{equation*}
we infer that the equation $W(r;L) = H$ has exactly two solutions $r_{\pm}(H,L)$ if and only if $-w_{0}(L) < H < 0$. Thus, recalling \eqref{dom-HL}, we have
\begin{equation*}
\Lambda = \biggl{\{}(H,L)\in\mathbb{R}^{2} \colon L\in (2\lambda,+\infty), \; H\in \left(-\dfrac{\kappa^{2}}{2(L^{2}-2\lambda)}, 0\right) \biggr{\}}.
\end{equation*}
For further convenience, we also observe that the two solutions $r_{\pm}(H,L)$ satisfy
\begin{equation}\label{eq-formular}
(r-r_{+}(H,L))(r-r_{-}(H,L)) = r^{2}+\dfrac{\kappa}{H} r + \dfrac{2\lambda-L^{2}}{2H} = 0.
\end{equation}

We are going to compute explicitly $T$ and $\Theta$ and, in consequence, their derivatives.
As for $T$, using \eqref{eq-formular}, we find
\begin{align*}
T(H,L) 
&= \sqrt{2} \int_{r_{-}(H,L)}^{r_{+}(H,L)} \dfrac{\mathrm{d}r}{\sqrt{H-W(r;L)}}
\\
&= \dfrac{\sqrt{2}}{\sqrt{-H}} \int_{r_{-}(H,L)}^{r_{+}(H,L)} \dfrac{r \,\mathrm{d}r}{\sqrt{(r_{+}(H,L)-r)(r-r_{-}(H,L))}}.
\end{align*}
By performing the change of variable
\begin{equation}\label{change-var}
r= \dfrac{r_{+}(H,L)+r_{-}(H,L) \, u^{2}}{u^{2}+1}, 
\qquad
u=\sqrt{\dfrac{r_{+}(H,L)-r}{r-r_{-}(H,L)}},
\end{equation}
we thus obtain
\begin{align*}
T(H,L) 
&= \dfrac{2\sqrt{2}}{\sqrt{-H}} \int_{0}^{+\infty} \dfrac{r_{+}(H,L)+r_{-}(H,L) u^{2}}{(u^{2}+1)^{2}} \,\mathrm{d}u
\\
&= \dfrac{2\sqrt{2}}{\sqrt{-H}} \biggl{[} \dfrac{r_{+}(H,L)-r_{-}(H,L)}{2}\dfrac{u}{u^{2}+1}
+ \dfrac{r_{+}(H,L)+r_{-}(H,L)}{2}\arctan(u) \biggr{]}_{0}^{+\infty}
\\
&= \dfrac{\pi \kappa}{\sqrt{2} (-H)^{\frac{3}{2}}}.
\end{align*}
Therefore, 
\begin{equation}\label{LC-T}
\partial_{H} T(H,L) = \dfrac{3\pi \kappa}{2\sqrt{2} (-H)^{\frac{5}{2}}} > 0,
\qquad
\partial_{L} T(H,L) = 0.
\end{equation}
Notice that $T$ is independent on $L$.

As for $\Theta$, by exploiting again the change of variable \eqref{change-var}, we get
\begin{align*}
\Theta(H,L) 
&= \sqrt{2} L \int_{r_{-}(H,L)}^{r_{+}(H,L)} \dfrac{\mathrm{d}r}{r^{2} \sqrt{H-W(r;L)}}
\\
&= \dfrac{\sqrt{2} L }{\sqrt{-H}} \int_{r_{-}(H,L)}^{r_{+}(H,L)} \dfrac{\mathrm{d}r}{r\sqrt{(r_{+}(H,L)-r)(r-r_{-}(H,L))}}
\\
&= \dfrac{2\sqrt{2} L}{\sqrt{-H}} \int_{0}^{+\infty} \dfrac{\mathrm{d}u}{r_{+}(H,L)+r_{-}(H,L) \, u^{2}}
\\
&= \dfrac{2\sqrt{2} L}{\sqrt{-H}} \Biggl{[}\dfrac{1}{\sqrt{r_{+}(H,L) \cdot r_{-}(H,L)} } \; \arctan \Biggl{(}\sqrt{\dfrac{r_{-}(H,L)}{r_{+}(H,L)}} \, u \Biggr{)}\, \Biggr{]}_{0}^{+\infty}
\\
&= \dfrac{2\pi L}{\sqrt{L^{2}-2 \lambda}}.
\end{align*}
This implies that
\begin{equation}\label{LC-Theta}
\partial_{H} \Theta(H,L) = 0,
\qquad
\partial_{L} \Theta(H,L) = -\dfrac{4 \pi \lambda}{(L^{2}-2 \lambda)^{\frac{3}{2}}} <0.
\end{equation}
Notice that $\Theta$ is independent on $H$.

Summing up, from \eqref{LC-T} and \eqref{LC-Theta}, we can compute
\begin{equation*}
\mathcal{D}(H,L) = \partial_{H} T(H,L) \cdot \partial_{L} \Theta (H,L)
= -\dfrac{3 \sqrt{2} \pi^{2} \kappa \lambda}{(-H)^{\frac{5}{2}} (L^{2}-2 \lambda)^{\frac{3}{2}}} <0,
\end{equation*}
so that condition \eqref{cond-d} holds true and Theorem~\ref{th-main} can be applied.

\begin{remark}\label{rem-4.1}
Notice that the previous computations for $T$ and $\Theta$ are valid for every $\lambda \in \mathbb{R}$. In particular, 
it is easily checked that condition \eqref{cond-d} is satisfied also when $\lambda < 0$. 
On the other hand, for $\lambda=0$ (corresponding to the Kepler potential $V(r) = \kappa/r$) we find $\mathcal{D}(H,L) = 0$ for all $(H,L)\in\Lambda$, and so Theorem~\ref{th-main} cannot be applied. Incidentally, notice that in this case $\Theta \equiv 2\pi$, as already observed in Remark~\ref{rem-2.1}.
\hfill$\lhd$
\end{remark}

\subsection{Homogeneous potentials}\label{section-4.2}

For $\alpha\in(-\infty,0)\cup(0,2)$ and $\kappa > 0$, let us consider the potential
\begin{equation}\label{hom-pot}
V(r)=\dfrac{\kappa}{\alpha r^{\alpha}}, 
\end{equation}
giving rise to \eqref{eq-hom}.

Recalling the notation introduced in Section~\ref{section-2}, for all $L > 0$ we have
\begin{align}
&W(r;L) = \dfrac{L^{2}}{2r^{2}}-\dfrac{\kappa}{\alpha r^{\alpha}} 
= -\dfrac{1}{2 \alpha r^{2}} \bigl{(} 2 \kappa r^{2-\alpha} - \alpha L^{2} \bigr{)},
\label{def-WL-hom}
\\
&W'(r;L) = - \dfrac{L^{2}}{r^{3}} + \dfrac{\kappa}{r^{\alpha+1}} 
= \dfrac{1}{r^{3}} \bigl{(} \kappa r^{2-\alpha} - L^{2} \bigr{)}.
\label{def-WLprime-hom}
\end{align}
Therefore, $W'(\cdot;L)$ changes sign (once) in $(0,+\infty)$. Hence, $W(\cdot;L)$ has a unique strict global minimum at 
\begin{equation}\label{def-r0L-hom}
r_{0}(L) = \biggl(\dfrac{L^{2}}{\kappa}\biggr)^{\!\frac{1}{2-\alpha}},
\end{equation}
with
\begin{equation*}
w_{0}(L) = -W(r_{0}(L);L) = \dfrac{2-\alpha}{2\alpha}\kappa^{\frac{2}{2-\alpha}} L^{-\frac{2\alpha}{2-\alpha}} > 0. 
\end{equation*}
Moreover, $W'(\cdot;L)<0$ in $(0,r_{0}(L))$, and $W'(\cdot;L)>0$ in $(r_{0}(L),+\infty)$, so that condition $(h_{W})$ holds.
Observing furthermore that
\begin{equation*}
\lim_{r\to0^{+}} W(r;L) = +\infty, \qquad \lim_{r\to+\infty} W(r;L) = 0,
\end{equation*}
we deduce that the equation $W(r;L) = H$ has exactly two solutions $r_{\pm}(H,L)$ if and only if $-w_{0}(L) < H < 0$.
Thus, 
\begin{equation}\label{def-lambda}
\Lambda = \biggl{\{}(H,L)\in\mathbb{R}^{2} \colon L\in (0,+\infty), \; H\in \left(-\dfrac{2-\alpha}{2\alpha}\kappa^{\frac{2}{2-\alpha}} L^{-\frac{2\alpha}{2-\alpha}}, 0\right) \biggr{\}}.
\end{equation}

We now present two lemmas, showing that the homogeneity of the problem allows us to reduce the computations
of $T$ and $\Theta$ to the case $L =1$.

\begin{lemma}\label{hom-THL}
For all $(H,L)\in\Lambda$, it holds that
\begin{equation}\label{eq-T_HL-riscal}
T(H,L) = L^{\frac{2+\alpha}{2-\alpha}} T(HL^{\frac{2\alpha}{2-\alpha}},1).
\end{equation}
\end{lemma}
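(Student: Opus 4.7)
The plan is to prove \eqref{eq-T_HL-riscal} by a direct scaling argument, exploiting the fact that the effective potential $W(r;L)$ is built from two homogeneous pieces whose $L$-dependence can be absorbed into a rescaling of $r$. Specifically, I would introduce the change of variable
\begin{equation*}
r = L^{\frac{2}{2-\alpha}} s,
\end{equation*}
which is natural because, from \eqref{def-r0L-hom}, the equilibrium radius $r_{0}(L)$ scales precisely as $L^{2/(2-\alpha)}$.

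First, I would verify the pointwise scaling identity
\begin{equation*}
W\bigl(L^{\frac{2}{2-\alpha}} s;\, L\bigr) = L^{-\frac{2\alpha}{2-\alpha}} W(s;\, 1),
\end{equation*}
which follows by substituting $r = L^{2/(2-\alpha)} s$ into \eqref{def-WL-hom}: the kinetic term $L^2/(2r^2)$ becomes $L^{2-4/(2-\alpha)}/(2s^2) = L^{-2\alpha/(2-\alpha)}/(2s^2)$, and the potential term $-\kappa/(\alpha r^{\alpha})$ becomes $-L^{-2\alpha/(2-\alpha)}\kappa/(\alpha s^{\alpha})$, so both terms carry the same factor $L^{-2\alpha/(2-\alpha)}$.

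Second, from this identity and the characterization of $r_{\pm}$ as the two roots of $W(r;L)=H$, I would deduce the scaling of the turning points:
\begin{equation*}
r_{\pm}(H,L) = L^{\frac{2}{2-\alpha}}\, r_{\pm}\bigl(HL^{\frac{2\alpha}{2-\alpha}},\, 1\bigr).
\end{equation*}
Indeed, $W(L^{2/(2-\alpha)} s; L) = H$ is equivalent to $W(s;1) = HL^{2\alpha/(2-\alpha)}$. Notice also that $(H,L) \in \Lambda$ iff $(HL^{2\alpha/(2-\alpha)},1) \in \Lambda$, thanks to the scaling of $w_0$ in \eqref{def-lambda}.

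Third, I would plug the change of variable into \eqref{def-T-HL}: the differential gives a factor $L^{2/(2-\alpha)}$, while the denominator, using $H - W(r;L) = L^{-2\alpha/(2-\alpha)}\bigl(HL^{2\alpha/(2-\alpha)} - W(s;1)\bigr)$, contributes a factor $L^{\alpha/(2-\alpha)}$. Combining, the overall prefactor is $L^{2/(2-\alpha) + \alpha/(2-\alpha)} = L^{(2+\alpha)/(2-\alpha)}$, and the remaining integral is exactly $T(HL^{2\alpha/(2-\alpha)}, 1)$, proving \eqref{eq-T_HL-riscal}. There is no real obstacle here: the proof is pure bookkeeping of exponents and the only thing to double-check is the sign and value of the exponent $2\alpha/(2-\alpha)$, which matches the one already appearing in the admissible range of $H$ in \eqref{def-lambda}.
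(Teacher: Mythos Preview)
Your proof is correct. Both your argument and the paper's exploit the same homogeneity scaling, but through slightly different implementations: the paper rescales the \emph{solution} of the radial ODE, setting $\rho(t)=L^{-\frac{2}{2-\alpha}} r(L^{\frac{2+\alpha}{2-\alpha}}t)$ and reading off the relation between minimal periods, whereas you perform the change of variable $r=L^{\frac{2}{2-\alpha}} s$ directly in the integral formula \eqref{def-T-HL}. Your route is arguably more self-contained, since it stays at the level of the time-map formula and avoids invoking the ODE; the paper's route, on the other hand, makes the dynamical origin of the exponent $\frac{2+\alpha}{2-\alpha}$ (as a time rescaling) more transparent, and the same trick is reused verbatim for Lemma~\ref{hom-thetaHL}. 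Either way the bookkeeping is identical and there is no gap.
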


\begin{proof}
We first observe (and we omit the straightforward proof) that, if $r$ is a periodic function satisfying the energy relation 
\begin{equation*}
\dfrac{1}{2}\dot{r}(t)^{2} + W(r(t);L) = H,
\end{equation*} 
then the function $\rho$ defined as 
\begin{equation*}
\rho(t)=L^{-\frac{2}{2-\alpha}} r(L^{\frac{2+\alpha}{2-\alpha}}t)
\end{equation*}
satisfies
\begin{equation*}
\dfrac{1}{2}\dot{\rho}(t)^{2} + W(\rho(t);1) = H L^{\frac{2\alpha}{2-\alpha}}.
\end{equation*} 
Denoting by $T_r$ and $T_\rho$ the minimal periods of $r$ and $\rho$, respectively, it obviously holds that $T_r = T(H,L)$ and $T_\rho = T(HL^{\frac{2\alpha}{2-\alpha}},1)$. Since $T_r$ and $T_\rho$ are related by
\begin{equation*}
T_r = L^{\frac{2+\alpha}{2-\alpha}} T_\rho,
\end{equation*}
the thesis follows.
\end{proof}

Recalling the definition of the function $P$ given in \eqref{def-P-HL}, in a similar way we have the following.

\begin{lemma}\label{hom-thetaHL}
For all $(H,L)\in\Lambda$, it holds that
\begin{equation}\label{eq-P_HL-riscal}
P(H,L) = L^{-1} P(HL^{\frac{2\alpha}{2-\alpha}},1).
\end{equation}
\end{lemma}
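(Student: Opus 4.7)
The approach parallels exactly the one used in Lemma \ref{hom-THL}: recall that by \eqref{def-P-HL} the quantity $P(H,L)$ is the minimal period of the one-degree-of-freedom oscillator
\begin{equation*}
\tfrac{1}{2}\dot u^{2} + W\!\left(\tfrac{1}{u};L\right) = H,
\end{equation*}
where, using the explicit form \eqref{hom-pot} of $V$, one has
\begin{equation*}
W\!\left(\tfrac{1}{u};L\right) = \tfrac{L^{2}}{2}u^{2} - \tfrac{\kappa}{\alpha}u^{\alpha}.
\end{equation*}
The idea is to exhibit a time-and-space rescaling that turns a solution of this oscillator at $(H,L)$ into a solution at $(HL^{\frac{2\alpha}{2-\alpha}},1)$.

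Concretely, given $u(\cdot)$ periodic and satisfying the above energy relation, I set
\begin{equation*}
\tilde u(s) = L^{\frac{2}{2-\alpha}}\, u(L^{-1}s),
\end{equation*}
where the exponents are chosen so that both terms of $W(1/\tilde u;1)$ pick up the same factor $L^{\frac{2\alpha}{2-\alpha}}$ as the kinetic term $\tfrac{1}{2}\dot{\tilde u}^{2}$. A one-line computation using the homogeneity of $u^{2}$ and $u^{\alpha}$ shows that
\begin{equation*}
\tfrac{1}{2}\dot{\tilde u}(s)^{2} + W\!\left(\tfrac{1}{\tilde u(s)};1\right) = HL^{\frac{2\alpha}{2-\alpha}},
\end{equation*}
so $\tilde u$ is a periodic solution of the oscillator at $(HL^{\frac{2\alpha}{2-\alpha}},1)$.

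Finally, since the time scaling $t\mapsto s=Lt$ multiplies the minimal period by $L$, the minimal periods of $u$ and $\tilde u$ are related by $L\cdot P(H,L) = P(HL^{\frac{2\alpha}{2-\alpha}},1)$, which is the claimed identity \eqref{eq-P_HL-riscal}. There is no real obstacle here: the exponents $a=\tfrac{2}{2-\alpha}$ and $b=-1$ in the ansatz $\tilde u(s)=L^{a}u(L^{b}s)$ are uniquely forced by consistency between the centrifugal and potential terms, exactly as the spatial exponent $\tfrac{2}{2-\alpha}$ in Lemma \ref{hom-THL}; the only small check is that the kinetic term then scales correctly, which it does precisely because $2a+2b = \tfrac{2\alpha}{2-\alpha}$. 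Alternatively, one may prove the lemma by applying the change of variable $v = L^{-\frac{2}{2-\alpha}}u$ directly in the integral \eqref{def-P-HL}, using that $\tfrac{1}{r_{\pm}(H,L)} = L^{-\frac{2}{2-\alpha}}\tfrac{1}{r_{\pm}(HL^{\frac{2\alpha}{2-\alpha}},1)}$ (itself a consequence of the scaling of $W$).
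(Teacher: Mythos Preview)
Your proof is correct and follows essentially the same approach as the paper: the paper defines the rescaled function $\eta(t)=L^{\frac{2}{2-\alpha}}u(L^{-1}t)$ (your $\tilde u$), checks that it satisfies the energy relation at level $(HL^{\frac{2\alpha}{2-\alpha}},1)$, and reads off the period relation $P_u=L^{-1}P_\eta$. Your exposition is slightly more detailed in justifying the choice of exponents and in mentioning the alternative direct change of variable in the integral, but the core argument is the same.
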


\begin{proof}
The proof is similar to the one of Lemma~\ref{hom-THL}. Indeed, if $u$ is a periodic function satisfying
\begin{equation*}
\frac{1}{2} \dot{u}^{2} + W\biggl{(}\frac{1}{u};L\biggr{)}=H,
\end{equation*}
then the function 
\begin{equation*}
\eta(t) =L^{\frac{2}{2-\alpha}} u(L^{-1} t)
\end{equation*}
satisfies
\begin{equation*}
\dfrac{1}{2}\dot{\eta}^{2} + W\biggl{(}\frac{1}{\eta};1\biggr{)} = H L^{\frac{2\alpha}{2-\alpha}}.
\end{equation*} 
Denoting by $P_u$ and $P_\eta$ the minimal periods of $u$ and $\eta$, respectively, it holds that $P_u = P(H,L)$ and $P_\eta = P(HL^{\frac{2\alpha}{2-\alpha}},1)$. Since $P_u$ and $P_\eta$ are related by
\begin{equation*}
P_u = L^{-1} P_\eta,
\end{equation*}
the thesis follows.
\end{proof}

From the above lemmas, we can obtain the following equalities involving $T$, $\Theta$, and their derivatives.

\begin{lemma}\label{lem-sigma-2}
For all $(H,L)\in\Lambda$, it holds that
\begin{align}
\partial_{L} T(H,L) &= \dfrac{L^{-1}}{2-\alpha}\bigl{(} (2+\alpha) T(H,L) + 2 \alpha H \partial_{H} T(H,L) \bigr{)},
\label{eq-pdeT}
\\
\partial_{L} \Theta(H,L) 
&= \dfrac{2\alpha}{2-\alpha} \dfrac{H}{L} \partial_{H} \Theta(H,L).
\label{eq-pdeTheha}
\end{align}
\end{lemma}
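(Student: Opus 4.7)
The plan is to obtain both identities by straightforward differentiation of the scaling formulas \eqref{eq-T_HL-riscal} and \eqref{eq-P_HL-riscal}, combined with the relation $\Theta(H,L) = L P(H,L)$ from \eqref{def-P}. No new auxiliary results are needed; both identities are purely algebraic consequences of the homogeneity scaling, so I do not foresee any genuine obstacle.

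For the first identity, I would set $\mu = \frac{2\alpha}{2-\alpha}$ and $\nu = \frac{2+\alpha}{2-\alpha}$, so that Lemma~\ref{hom-THL} reads $T(H,L) = L^{\nu} T(HL^{\mu},1)$. Differentiating in $H$ gives $\partial_{H} T(H,L) = L^{\nu+\mu} \partial_{H} T(HL^{\mu},1)$. Differentiating in $L$ and applying the product and chain rules yields
\begin{equation*}
\partial_{L} T(H,L) = \nu L^{\nu-1} T(HL^{\mu},1) + \mu H L^{\nu+\mu-1} \partial_{H} T(HL^{\mu},1),
\end{equation*}
and replacing $L^{\nu} T(HL^{\mu},1)$ and $L^{\nu+\mu} \partial_{H} T(HL^{\mu},1)$ by $T(H,L)$ and $\partial_{H} T(H,L)$ respectively gives $\partial_{L} T(H,L) = L^{-1} \bigl( \nu T(H,L) + \mu H \partial_{H} T(H,L) \bigr)$, which is exactly \eqref{eq-pdeT} after substituting the values of $\mu$ and $\nu$.

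For the second identity, I would first observe that, by \eqref{def-P} and Lemma~\ref{hom-thetaHL},
\begin{equation*}
\Theta(H,L) = L P(H,L) = L \cdot L^{-1} P(HL^{\mu},1) = P(HL^{\mu},1),
\end{equation*}
so $\Theta(H,L)$ depends on $(H,L)$ only through the combination $HL^{\mu}$. Differentiating, one finds $\partial_{H} \Theta(H,L) = L^{\mu} \partial_{H} P(HL^{\mu},1)$ and $\partial_{L} \Theta(H,L) = \mu H L^{\mu-1} \partial_{H} P(HL^{\mu},1)$, whence
\begin{equation*}
\partial_{L} \Theta(H,L) = \mu \frac{H}{L} \partial_{H} \Theta(H,L),
\end{equation*}
which is \eqref{eq-pdeTheha}.

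The only point that needs a modicum of care is ensuring that the scaling formulas \eqref{eq-T_HL-riscal} and \eqref{eq-P_HL-riscal} are valid in a neighbourhood of $(H,L) \in \Lambda$, so that differentiation under the equality sign is legitimate; this is immediate from the explicit form of $\Lambda$ in \eqref{def-lambda}, which is itself scaling-invariant under the transformation $(H,L) \mapsto (HL^{\mu},1)$. Everything else is routine calculus.
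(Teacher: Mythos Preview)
Your proposal is correct and follows essentially the same approach as the paper: both identities are obtained by differentiating the scaling formulas \eqref{eq-T_HL-riscal} and \eqref{eq-P_HL-riscal}. Your treatment of \eqref{eq-pdeTheha} is in fact slightly more streamlined than the paper's, since by observing directly that $\Theta(H,L)=P(HL^{\mu},1)$ you bypass the paper's intermediate computation of $\partial_{L}P$ via \eqref{eq-AAAL}; this is a cosmetic shortcut rather than a different argument.
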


\begin{proof}
As for \eqref{eq-pdeT}, we differentiate \eqref{eq-T_HL-riscal} with respect to $H$ and $L$ to obtain
\begin{align*}
\partial_{H} T(H,L) &= L^{\frac{2+\alpha}{2-\alpha}+ \frac{2\alpha}{2-\alpha}} \partial_{H} T(HL^{\frac{2\alpha}{2-\alpha}},1),
\\
\partial_{L} T(H,L) 
&= \frac{2+\alpha}{2-\alpha} L^{\frac{2+\alpha}{2-\alpha}-1} T(HL^{\frac{2\alpha}{2-\alpha}},1)
\\
&\quad + L^{\frac{2+\alpha}{2-\alpha}} \partial_{H} T(HL^{\frac{2\alpha}{2-\alpha}},1) H \frac{2\alpha}{2-\alpha} L^{\frac{2\alpha}{2-\alpha}-1}
\\
&= \frac{2+\alpha}{2-\alpha} L^{-1} T(H,L) + \frac{2\alpha}{2-\alpha} H L^{-1} \partial_{H} T(H,L). 
\end{align*}
Hence, \eqref{eq-pdeT} follows.

As for \eqref{eq-pdeTheha}, we first differentiate \eqref{def-P} with respect to $L$ to obtain
\begin{equation}\label{eq-AAAL}
\partial_{L} \Theta(H,L) = P(H,L) + L \, \partial_{L} P(H,L).
\end{equation}
Then, exploiting \eqref{eq-P_HL-riscal}, we have
\begin{align*}
\partial_{H} P(H,L) &= L^{-1+\frac{2\alpha}{2-\alpha}} \partial_{H} P(HL^{\frac{2\alpha}{2-\alpha}},1),
\\
\partial_{L} P(H,L) &= -L^{-2} \partial_{L} P(HL^{\frac{2\alpha}{2-\alpha}},1)
+ L^{-1} \partial_{H} P(HL^{\frac{2\alpha}{2-\alpha}},1) H \dfrac{2\alpha}{2-\alpha}L^{\frac{2\alpha}{2-\alpha}-1}
\\
&= -L^{-1} P(H,L) + \dfrac{2\alpha}{2-\alpha} L^{-1} H \partial_{H} P(H,L).
\end{align*}
Thus, using \eqref{eq-AAAL} we get
\begin{equation*}
\partial_{L} \Theta(H,L) = \dfrac{2\alpha}{2-\alpha} H \partial_{H} P(H,L).
\end{equation*}
Since $\partial_{H} \Theta(H,L) = L \partial_{H} P(H,L)$, \eqref{eq-pdeTheha} follows.
\end{proof}

Using this lemma, we can finally obtain the following formula for $\mathcal{D}(H,L)$.

\begin{lemma}\label{lem-sigma-5}
For all $(H,L)\in\Lambda$, it holds that
\begin{equation*}\label{eq-dhom}
\mathcal{D}(H,L) = - \dfrac{2+\alpha}{2-\alpha} \dfrac{1}{L} T(H,L) \partial_{H} \Theta(H,L).
\end{equation*}
\end{lemma}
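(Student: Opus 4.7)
The plan is a direct algebraic substitution: the two identities proved in Lemma~\ref{lem-sigma-2} express both $\partial_{L} T$ and $\partial_{L} \Theta$ in terms of the ``$H$-derivatives'' $\partial_{H} T$ and $\partial_{H} \Theta$ (plus $T$ itself). Plugging these into the definition
\begin{equation*}
\mathcal{D}(H,L) = \partial_{H} T(H,L) \cdot \partial_{L} \Theta(H,L) - \partial_{L} T(H,L) \cdot \partial_{H} \Theta(H,L)
\end{equation*}
should make the terms containing $\partial_{H} T \cdot \partial_{H} \Theta$ cancel, leaving only a multiple of $T \cdot \partial_{H} \Theta$.

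In more detail, I would first use \eqref{eq-pdeTheha} to rewrite the first summand of $\mathcal{D}(H,L)$ as
\begin{equation*}
\partial_{H} T(H,L)\cdot \partial_{L}\Theta(H,L) = \frac{2\alpha}{2-\alpha}\,\frac{H}{L}\, \partial_{H} T(H,L)\cdot \partial_{H}\Theta(H,L),
\end{equation*}
and then use \eqref{eq-pdeT} to rewrite the second summand as
\begin{equation*}
\partial_{L} T(H,L)\cdot \partial_{H}\Theta(H,L) = \frac{1}{(2-\alpha)L}\bigl{[}(2+\alpha)T(H,L) + 2\alpha H\,\partial_{H}T(H,L)\bigr{]}\cdot \partial_{H}\Theta(H,L).
\end{equation*}

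Subtracting these two expressions, the contribution $\dfrac{2\alpha H}{(2-\alpha)L}\,\partial_{H}T \cdot \partial_{H}\Theta$ appears with opposite signs and thus drops out, yielding
\begin{equation*}
\mathcal{D}(H,L) = -\frac{2+\alpha}{2-\alpha}\,\frac{1}{L}\,T(H,L)\cdot\partial_{H}\Theta(H,L),
\end{equation*}
which is the stated identity. There is no real obstacle here: the lemma is essentially a bookkeeping consequence of Lemma~\ref{lem-sigma-2}, and the only thing to watch is that the $\partial_{H}T \cdot \partial_{H}\Theta$ terms match exactly so that they cancel — which they do, because the factor $\frac{2\alpha H}{(2-\alpha)L}$ appears with the same coefficient on both sides of the subtraction.
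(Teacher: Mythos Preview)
Your proof is correct and follows essentially the same approach as the paper: both substitute the identities \eqref{eq-pdeT} and \eqref{eq-pdeTheha} from Lemma~\ref{lem-sigma-2} into the definition of $\mathcal{D}(H,L)$ and observe that the $\partial_{H}T\cdot\partial_{H}\Theta$ terms cancel.
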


\begin{proof}
Using Lemma~\ref{lem-sigma-2} we have
\begin{align*}
\mathcal{D}(H,L)
&= \partial_{H} T(H,L) \cdot \partial_{L} \Theta (H,L) - \partial_{L} T(H,L) \cdot \partial_{H} \Theta (H,L) = 
\\
&= \partial_{H} T(H,L) \cdot \dfrac{2\alpha}{2-\alpha} \dfrac{H}{L} \partial_{H} \Theta(H,L)
\\
& \quad - \dfrac{L^{-1}}{2-\alpha}\bigl{(} (2+\alpha) T(H,L) + 2 \alpha H \partial_{H} T(H,L) \bigr{)}
\cdot \partial_{H} \Theta (H,L)
\\
&= - \dfrac{2+\alpha}{2-\alpha} \dfrac{1}{L} T(H,L) \partial_{H} \Theta(H,L)
\end{align*}
and the thesis follows.
\end{proof}

For $\alpha = -2$ (corresponding to the harmonic oscillator), we have thus found $\mathcal{D}(H,L) = 0$ for all $(H,L)\in\Lambda$ and so Theorem~\ref{th-main} does not apply. For $\alpha = 1$ (corresponding to the Kepler problem) we again have $\mathcal{D}(H,L) = 0$ since $\partial_{H} \Theta(H,L) = 0$ as shown in Remark~\ref{rem-4.1}. In the other cases (namely, $\alpha < 2$ and $\alpha \notin \{-2,0,1\}$), we know from \cite{Ca-15,Ro-18} that
\begin{equation}\label{sign-cast-rojas}
\partial_{H} \Theta(H,L)
\begin{cases}
>0, & \text{if $\alpha\in(-\infty,-2)\cup(1,2)$,}
\\
<0, & \text{if $\alpha\in(-2,1)\setminus \{0\}$,}
\end{cases}
\end{equation}
for all $(H,L)\in\Lambda$.
Accordingly
\begin{equation*}
\mathcal{D}(H,L)
\begin{cases}
>0, & \text{if $\alpha\in(-\infty,1)\setminus \{-2,0\}$,}
\\
<0, & \text{if $\alpha\in(1,2)$,}
\end{cases}
\end{equation*}
for all $(H,L)\in\Lambda$.
Therefore, condition \eqref{cond-d} holds true and Theorem~\ref{th-main} can be applied.

\begin{remark}[Monotonicity of $\Theta$ with respect to $L$]\label{rem-4.2}
We notice that, using \eqref{eq-pdeTheha}, we can compute the sign of $\partial_{L} \Theta(H,L)$ thanks to \eqref{sign-cast-rojas}. Indeed, recalling that $H<0$, we obtain
\begin{equation*}
\partial_{L} \Theta(H,L)
\begin{cases}
>0, & \text{if $\alpha\in(-\infty,-2)\cup(0,1)$,}
\\
<0, & \text{if $\alpha\in(-2,0)\cup(1,2)$,}
\end{cases}
\end{equation*}
for all $(H,L)\in\Lambda$. Notice that this was explicitly raised as an open problem in \cite[p.~25]{Ca-phd}. 
\hfill$\lhd$
\end{remark}

\begin{remark}[Logarithmic potential]\label{rem-4.3}
For $\alpha = 0$, the definition of the potential $V$ in \eqref{hom-pot} is clearly meaningless. However, the central force problem \eqref{eq-hom} still makes sense, coming from the logarithmic potential
\begin{equation*}
V(r) = -\kappa \log r.
\end{equation*}
with $\kappa>0$.
In this case, in spite of the missing homogeneity of $V$, one can still argue in a similar way as before. 
More precisely,
exploiting the scaling $\rho(t) = L^{-1}r(Lt)$ it is easily proved that
\begin{equation*}
T(H,L) = L T(H-\log L,1),
\end{equation*}
while, using the scaling $\eta(t) = L u(L^{-1}t)$, one can deduce that
\begin{equation*}
P(H,L) = L^{-1} P(H - \log L ,1).
\end{equation*}
Differentiating these formulas, we reach
\begin{equation*}
\partial_{L} T(H,L) = L^{-1}\bigl{(} T(H,L) - \partial_{H} T(H,L) \bigr{)}
\end{equation*}
and
\begin{equation*}
\partial_{L} \Theta(H,L) = -L^{-1}\partial_{H} \Theta(H,L).
\end{equation*}
Therefore
\begin{equation*}
\mathcal{D}(H,L) = - L^{-1} T(H,L) \partial_{H} \Theta(H,L).
\end{equation*}
Incidentally, notice that the above formula corresponds exactly to formula \eqref{eq-dhom} for $\alpha = 0$.
Now, recalling \cite{Ca-15}, we known that $\partial_{H} \Theta(H,L) < 0$ and so 
$\mathcal{D}(H,L) > 0$. Hence, also in the case of the logarithmic potential condition \eqref{cond-d} is satisfied. We omit the details for briefness.
\hfill$\lhd$
\end{remark}

\subsection{Lennard-Jones potentials}\label{section-4.3}

Let us consider the potential
\begin{equation*}
V(r) = 4\ee \dfrac{\sigma^{6}}{r^{6}} - 4\ee \dfrac{\sigma^{12}}{r^{12}},
\end{equation*}
where $\ee,\sigma>0$, giving rise to \eqref{eq-lj}.

Recalling the notation introduced in Section~\ref{section-2}, for every $L>0$, we have
\begin{equation*}
W(r;L) = \dfrac{L^{2}}{2r^{2}} - 4\ee \dfrac{\sigma^{6}}{r^{6}} + 4\ee \dfrac{\sigma^{12}}{r^{12}},
\end{equation*}
which satisfies
\begin{equation*}
\lim_{r\to0^{+}} W(r;L) = +\infty, \qquad \lim_{r\to+\infty} W(r;L) = 0.
\end{equation*}

We analyze the monotonicity of $W(\cdot;L)$. Accordingly, we compute
\begin{equation*}
W'(r;L) = -\dfrac{L^{2}}{r^{3}} +24 \ee \dfrac{\sigma^{6}}{r^{7}} - 48 \ee \dfrac{\sigma^{12}}{r^{13}}
= \dfrac{1}{r^{13}} \bigl{(} - L^{2}r^{10} + 24 \ee \sigma^{6} r^{6} - 48 \ee \sigma^{12} \bigr{)}.
\end{equation*}
Let us set
\begin{equation*}
n(r):= - L^{2}r^{10} + 24 \ee \sigma^{6} r^{6} - 48 \ee \sigma^{12}.
\end{equation*}
Then,
\begin{equation*}
n'(r) = -10 L^{2}r^{9} +144 \ee \sigma^{6} r^{5} 
= 2 r^{5} \bigl{(} -5 L^{2} r^{4} + 72 \ee \sigma^{6} \bigr{)}.
\end{equation*}
Thus, $n$ is strictly increasing in $(0,\bar{r}(L))$ and strictly decreasing in $(\bar{r}(L),+\infty)$, where
\begin{equation*}
\bar{r}(L) = \biggl{(} \dfrac{72}{5} \dfrac{\ee \sigma^{6}}{L^{2}} \biggr{)}^{\!\frac{1}{4}},
\qquad
n(\bar{r}(L)) = 48 \ee \sigma^{12} \biggl{[} \dfrac{1}{5} \biggl{(}\dfrac{72}{5}\biggr{)}^{\!\frac{3}{2}} \dfrac{\ee^{\frac{3}{2}} \sigma^{3}}{L^{3}} - 1\biggr{]},
\end{equation*}
are the maximum point and the maximum of $n$, respectively. 
Since
\begin{equation*}
\lim_{r\to 0^{+}} n(r) = - 48 \ee \sigma^{12}<0,
\qquad \lim_{r\to +\infty} n(r) = - \infty,
\end{equation*}
a necessary condition to have a zero of $ W'(r;L)$ is $n(\bar{r}(L))>0$, that is
\begin{equation}\label{LJ-nec}
L^{2} < \biggl{(}\dfrac{1}{5}\biggr{)}^{\!\frac{2}{3}} \dfrac{72}{5} \ee \sigma^{2}.
\end{equation}
If \eqref{LJ-nec} holds, then $n$ has exactly two zeros $r_{1}(L)$ and $r_{2}(L)$ with $0<r_{1}(L)<r_{2}(L)$. 
From this analysis, we deduce that
\begin{equation*}
W'(\cdot;L)<0 \quad \text{in $(0,r_{1}(L))\cup(r_{2}(L),+\infty)$,} 
\qquad
W'(\cdot;L)>0 \quad \text{in $(r_{1}(L),r_{2}(L))$.} 
\end{equation*}
Hence, condition $(h_{W})$ holds for $r_{0}(L)=r_{1}(L)$. See the representation of the graph of $W(\cdot;L)$ in Figure~\ref{fig-03}.

\begin{figure}[htb]
\begin{tikzpicture}
\begin{axis}[
  scaled ticks=false, 
  tick label style={font=\scriptsize},
  axis y line=left, axis x line=middle,
  xtick={0},
  ytick={-0.033,0},
  xticklabels={},
  yticklabels={$-w_{0}(L)$,$0$},
  xlabel={\small $r$},
  ylabel={\small $W(r;L)$},
every axis x label/.style={
    at={(ticklabel* cs:1.0)},
    anchor=west,
},
every axis y label/.style={
    at={(ticklabel* cs:1.0)},
    anchor=south,
},
  width=6cm,
  height=6cm,
  xmin=0,
  xmax=10,
  ymin=-0.04,
  ymax=0.07]
\addplot [color=black,line width=0.9pt,smooth] coordinates {(0.793, 0.0987088) (0.8425, 0.0369734) (0.892, 0.00106148) (0.9415, -0.0188788) (0.991, -0.0289165) (1.0405, -0.0328424) (1.09, -0.033048) (1.1395, -0.0310527) (1.189, -0.0278241) (1.2385, -0.0239774) (1.288, -0.0199005) (1.3375, -0.015834) (1.387, -0.011923) (1.4365, -0.00824977) (1.486, -0.00485672) (1.5355, -0.00176018) (1.585, 0.00103981) (1.6345, 0.00355313) (1.684, 0.00579552) (1.7335, 0.0077857) (1.783, 0.00954371) (1.8325, 0.0110897) (1.882, 0.0124431) (1.9315, 0.0136227) (1.981, 0.0146456) (2.0305, 0.0155278) (2.08, 0.016284) (2.1295, 0.0169275) (2.179, 0.0174704) (2.2285, 0.0179235) (2.278, 0.0182966) (2.3275, 0.0185985) (2.377, 0.0188371) (2.4265, 0.0190195) (2.476, 0.019152) (2.5255, 0.0192403) (2.575, 0.0192894) (2.6245, 0.0193039) (2.674, 0.0192878) (2.7235, 0.0192448) (2.773, 0.019178) (2.8225, 0.0190904) (2.872, 0.0189846) (2.9215, 0.0188628) (2.971, 0.0187271) (3.0205, 0.0185795) (3.07, 0.0184214) (3.1195, 0.0182544) (3.169, 0.0180799) (3.2185, 0.0178989) (3.268, 0.0177126) (3.3175, 0.0175218) (3.367, 0.0173275) (3.4165, 0.0171303) (3.466, 0.0169309) (3.5155, 0.0167299) (3.565, 0.0165279) (3.6145, 0.0163253) (3.664, 0.0161225) (3.7135, 0.0159199) (3.763, 0.0157179) (3.8125, 0.0155166) (3.862, 0.0153164) (3.9115, 0.0151175) (3.961, 0.01492) (4.0105, 0.0147242) (4.06, 0.0145302) (4.1095, 0.0143382) (4.159, 0.0141482) (4.2085, 0.0139603) (4.258, 0.0137746) (4.3075, 0.0135912) (4.357, 0.0134101) (4.4065, 0.0132314) (4.456, 0.0130551) (4.5055, 0.0128813) (4.555, 0.0127099) (4.6045, 0.012541) (4.654, 0.0123746) (4.7035, 0.0122106) (4.753, 0.0120491) (4.8025, 0.0118901) (4.852, 0.0117336) (4.9015, 0.0115795) (4.951, 0.0114278) (5.0005, 0.0112785) (5.05, 0.0111316) (5.0995, 0.0109871) (5.149, 0.0108449) (5.1985, 0.010705) (5.248, 0.0105674) (5.2975, 0.010432) (5.347, 0.0102988) (5.3965, 0.0101679) (5.446, 0.0100391) (5.4955, 0.00991238) (5.545, 0.00978778) (5.5945, 0.00966523) (5.644, 0.00954471) (5.6935, 0.00942618) (5.743, 0.00930961) (5.7925, 0.00919496) (5.842, 0.0090822) (5.8915, 0.0089713) (5.941, 0.00886223) (5.9905, 0.00875495) (6.04, 0.00864944) (6.0895, 0.00854565) (6.139, 0.00844357) (6.1885, 0.00834315) (6.238, 0.00824437) (6.2875, 0.0081472) (6.337, 0.00805161) (6.3865, 0.00795757) (6.436, 0.00786505) (6.4855, 0.00777402) (6.535, 0.00768445) (6.5845, 0.00759632) (6.634, 0.0075096) (6.6835, 0.00742426) (6.733, 0.00734028) (6.7825, 0.00725763) (6.832, 0.00717628) (6.8815, 0.00709622) (6.931, 0.00701742) (6.9805, 0.00693985) (7.03, 0.00686349) (7.0795, 0.00678831) (7.129, 0.0067143) (7.1785, 0.00664144) (7.228, 0.0065697) (7.2775, 0.00649905) (7.327, 0.00642949) (7.3765, 0.00636099) (7.426, 0.00629353) (7.4755, 0.00622708) (7.525, 0.00616164) (7.5745, 0.00609718) (7.624, 0.00603369) (7.6735, 0.00597114) (7.723, 0.00590952) (7.7725, 0.00584881) (7.822, 0.005789) (7.8715, 0.00573006) (7.921, 0.00567199) (7.9705, 0.00561476) (8.02, 0.00555837) (8.0695, 0.00550279) (8.119, 0.00544801) (8.1685, 0.00539402) (8.218, 0.0053408) (8.2675, 0.00528833) (8.317, 0.00523662) (8.3665, 0.00518563) (8.416, 0.00513536) (8.4655, 0.0050858) (8.515, 0.00503693) (8.5645, 0.00498874) (8.614, 0.00494121) (8.6635, 0.00489435) (8.713, 0.00484813) (8.7625, 0.00480254) (8.812, 0.00475758) (8.8615, 0.00471322) (8.911, 0.00466947) (8.9605, 0.00462631) (9.01, 0.00458373) (9.0595, 0.00454172) (9.109, 0.00450027) (9.1585, 0.00445937) (9.208, 0.00441902) (9.2575, 0.00437919) (9.307, 0.00433989) (9.3565, 0.00430111) (9.406, 0.00426283) (9.4555, 0.00422504) (9.505, 0.00418775) (9.5545, 0.00415094) (9.604, 0.0041146) (9.6535, 0.00407872) (9.703, 0.00404331) (9.7525, 0.00400834) (9.802, 0.00397382) (9.8515, 0.00393973) (9.901, 0.00390607) (9.9505, 0.00387283) (10., 0.00384)};
\draw [color=gray, dashed, line width=0.3pt] (axis cs: 1.0678,0)--(axis cs:1.0678,-0.033);
\draw [color=gray, dashed, line width=0.3pt] (axis cs: 0,-0.0333)--(axis cs:1.0678,-0.033);
\draw [color=gray, dashed, line width=0.3pt] (axis cs: 2.622,-0.002)--(axis cs:2.622,0.0193);
\draw [color=gray, dashed, line width=0.3pt] (axis cs: 0,0.0193)--(axis cs:2.622,0.0193);
\node at (axis cs: 7,0.04) {\scriptsize{$r_{1}(L)$}};
\node at (axis cs: 2.622,-0.008) {\scriptsize{$r_{2}(L)$}};
\draw [color=gray, line width=0.3pt] (axis cs: 1.0678,0)--(axis cs:6,0.04);
\fill (axis cs: 1.0678,0) circle (0.8pt);
\end{axis}
\end{tikzpicture}
\captionof{figure}{Qualitative graph of $W(\cdot;L)$ for the Lennard-Jones potential with angular momentum $L>0$ satisfying \eqref{LJ-nec}.}
\label{fig-03}
\end{figure}
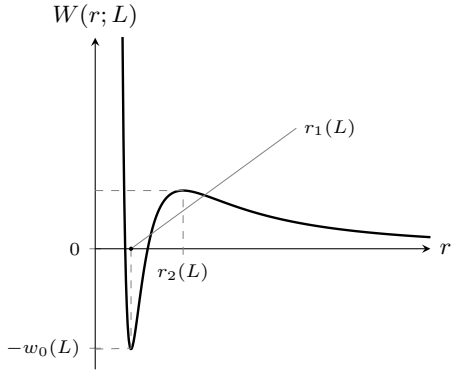

Since $r_{0}(L)$ solves $n(r)=0$, we have
\begin{equation}\label{eq-r0}
L^{2} r_{0}(L)^{10} = 24 \ee \sigma^{6} r_{0}(L)^{6} - 48 \ee \sigma^{12}
\end{equation}
and thus
\begin{equation*}
w_{0}(L) = -W(r_{0}(L);L) = 
- \dfrac{4\ee\sigma^{6}}{r_{0}(L)^{2}} 
\bigl{(} 2 r_{0}(L)^{6} - 5 \sigma^{6} \bigr{)}.
\end{equation*}

We consider the functions $T$ and $\Theta$ defined in the set
\begin{equation*}
\Lambda = \biggl{\{}(H,L)\in\mathbb{R}^{2} \colon L>0, \; L^{2} < \biggl{(}\dfrac{1}{5}\biggr{)}^{\!\frac{2}{3}} \dfrac{72}{5} \ee \sigma^{2}, \; H\in (-w_{0}(L),W(r_{2}(L))) \biggr{\}}.
\end{equation*}
A global study of the sign of $\mathcal{D}$ on the set $\Lambda$ seems to be a difficult task. Therefore, from now on our analysis will be local, in the sense that we are going to consider values of $H$ slightly above $-w_{0}(L)$.
More precisely, taking advantage of the results in Appendix~\ref{appendix-A}, we compute the limits as $H\to(-w_{0}(L))^{+}$ of $\partial_{H} T$, $\partial_{L} T$, $\partial_{H} \Theta$, $\partial_{L} \Theta$, in order to show that
\begin{equation}\label{eq-LJ-D_HL}
\lim_{H\to(-w_{0}(L))^{+}} \mathcal{D}(H,L) > 0,
\end{equation}
for every admissible $L$ sufficiently small.
Thus, for every admissible $L$ small enough, there exists a suitable $\tilde{H}(L)>-w_{0}(L)$ such that $\mathcal{D}(H,L)>0$ for every $H\in(-w_{0}(L), \tilde{H}(L))$. Hence, $\mathcal{D}(H,L)\neq0$ at least on a non-empty open subset of $\Lambda$.

First, we deal with the function $T$. To enter the framework of Appendix~\ref{appendix-A}, we set
$\mathcal{V}(r)=V(r)$ and $k=-1$. Hence,
\begin{equation*}
\Omega(r;L) = W(r;L) + w_{0}(L) = \dfrac{1}{2r^{12}} \bigl{(} L^{2} r^{10} - 8 \ee \sigma^{6} r^{6} + 8 \ee \sigma^{12} + 2 w_{0}(L) r^{12} \bigr{)}.
\end{equation*}
The higher order derivatives are the following
\begin{align*}
&W''(r;L) = \dfrac{3}{r^{14}} \bigl{(} L^{2} r^{10} - 56 \ee \sigma^{6} r^{6} + 208 \ee \sigma^{12} \bigr{)},
\\
&W'''(r;L) = -\dfrac{12}{r^{15}} \bigl{(} L^{2} r^{10} - 112 \ee \sigma^{6} r^{6} + 728 \ee \sigma^{12} \bigr{)},
\\
&W^{(4)}(r;L) = \dfrac{12}{r^{16}} \bigl{(} 5 L^{2} r^{10} - 1008 \ee \sigma^{6} r^{6} + 10920 \ee \sigma^{12} \bigr{)}.
\end{align*}
Then, recalling \eqref{eq-r0} and the definition of $\Omega_{0}^{(i)}(L)$ given in \eqref{eq-deromegazero}, we find
\begin{align*}
\Omega_{0}^{(2)}(L) 
&= \dfrac{3}{r_{0}(L)^{14}} \bigl{(} (24 - 56)  \ee \sigma^{6} r_{0}(L)^{6} + (-48+208) \ee \sigma^{12} \bigr{)} 
\\
&= \dfrac{2^{5}\cdot 3 \, \ee \sigma^{6}}{r_{0}(L)^{14}} \bigl{(} - r_{0}(L)^{6} + 5 \sigma^{6} \bigr{)},
\\
\Omega_{0}^{(3)} (L) 
&= \dfrac{2^{5}\cdot 3 \, \ee \sigma^{6}}{r_{0}(L)^{15}} \bigl{(} 11 r_{0}(L)^{6} - 85 \sigma^{6} \bigr{)},
\\
\Omega_{0}^{(4)}(L) 
&= \dfrac{2^{5}\cdot 3^{2} \, \ee \sigma^{6}}{r_{0}(L)^{16}} \bigl{(} -37 r_{0}(L)^{6} + 445 \sigma^{6} \bigr{)}.
\end{align*}
Using the fact that $n'(r_{0}(L))>0$ together with \eqref{eq-r0}, we easily deduce that
\begin{equation}\label{positive-omega-2}
\Omega_{0}^{(2)}(L)>0.
\end{equation}
Moreover, we compute
\begin{equation*}
\biggl{(} \dfrac{\mathrm{d}}{\mathrm{d}L} \Omega^{(2)}_{0} \biggr{)}(L) = - \dfrac{2^{5}\cdot 3 \, \ee \sigma^{6}}{r_{0}(L)^{15}} r_{0}'(L) \Bigl{(} 14 \bigl{(} - r_{0}(L)^{6} + 5 \sigma^{6} \bigr{)} + 6 r_{0}(L)^{6} \Bigr{)} < 0,
\end{equation*}
where the last inequality follows from \eqref{positive-omega-2} and the fact that $r_{0}'(L)>0$ by \eqref{eq-dim11} (with $k=-1$).
Now, omitting the dependence on $L$, according to definition \eqref{def-sigma-zero} of $\Sigma_{0}$, we thus have
\begin{align*}
\Sigma_{0} &= 5 (\Omega_{0}^{(3)})^{2} - 3 \Omega_{0}^{(2)} \Omega_{0}^{(4)} 
\\
&= \dfrac{2^{10}\cdot 3^{2} \, \ee^{2} \sigma^{12}}{r_{0}^{30}} 
\Bigl{[} 5 \bigl{(} 11 r_{0}^{6} - 85 \sigma^{6} \bigr{)}^{2} 
- 3^{2} \bigl{(} - r_{0}^{6} + 5 \sigma^{6} \bigr{)} \bigl{(} -37 r_{0}^{6} + 445 \sigma^{6} \bigr{)} \Bigr{]}
\\
&= \dfrac{2^{12}\cdot 3^{2} \, \ee^{2} \sigma^{12}}{r_{0}^{30}} \bigr{(} 68 r_{0}^{12} - 920 \sigma^{6} r_{0}^{6} + 4025 \sigma^{12} \bigl{)} >0.
\end{align*}
Then, by Proposition~\ref{prop-stime-tempo}, for every admissible $L$ it holds that 
\begin{equation}\label{lim-LJ-1}
\lim_{H\to(-w_{0}(L))^{+}} \partial_{H} T(H,L) >0.
\end{equation}
Moreover, we have
\begin{align*}
&r_{0}^{2} \Sigma_{0}+24 \Omega_{0}^{(2)}\Omega_{0}^{(3)} r_{0}+72 (\Omega_{0}^{(2)})^{2}=
\\
&= \dfrac{2^{12}\cdot 3^{2} \, \ee^{2} \sigma^{12}}{r_{0}^{28}} 
\Bigl{[} \bigl{(} 68 r_{0}^{12} - 920 \sigma^{6} r_{0}^{6} + 4025 \sigma^{12} \bigr{)}
+ 6 \bigl{(} - r_{0}^{6} + 5 \sigma^{6} \bigr{)}\bigl{(} 11 r_{0}^{6} - 85 \sigma^{6} \bigr{)}\\
& \hspace{75pt} + 18 \bigl{(} - r_{0}^{6} + 5 \sigma^{6} \bigr{)}^{2}
\Bigl{]}
\\
&= \dfrac{2^{12}\cdot 3^{2}\cdot 5 \, \ee^{2} \sigma^{12}}{r_{0}^{28}} 
\bigl{(} 4 r_{0}^{12} - 52 \sigma^{6} r_{0}^{6} + 385 \sigma^{12}\bigr{)}>0.
\end{align*}
Then, by Proposition~\ref{prop-stime-tempo} (with $k=-1$ and $s_{0}(L)=r_{0}(L)$), for every admissible $L$ it holds that
\begin{equation}\label{lim-LJ-2}
\lim_{H\to(-w_{0}(L))^{+}} \partial_{L} T(H,L) <0.
\end{equation}

Second, we deal with the function $\Theta$. To enter the framework of Appendix~\ref{appendix-A}, we set
$\mathcal{V}(u)=V(1/u)$ and $k=1$. Hence,
\begin{equation*}
\Omega(u;L) = W\left(\dfrac{1}{u};L\right) + W\left(\dfrac{1}{r_{0}(L)};L\right) = \dfrac{1}{2} L^{2} u^{2} - 4 \ee \sigma^{6} u^{6} + 4 \ee \sigma^{12} u^{12} + W\left(\dfrac{1}{r_{0}(L)};L\right).
\end{equation*}
Notice that the minimum point of $\Omega(\cdot;L)$ is $u_{0}(L)=1/ r_{0}(L)$, with $r_{0}(L)$ defined as above, so that hypothesis $(h_{\mathcal{W}})$ is satisfied.
The derivatives of $\Omega$ are the following
\begin{align*}
&\Omega'(u;L) = L^{2} u - 24 \ee \sigma^{6} u^{5}+48 \ee \sigma^{12} u^{11},
\\
&\Omega''(u;L) = L^{2} - 120 \ee \sigma^{6} u^{4}+528 \ee \sigma^{12} u^{10},
\\
&\Omega'''(u;L) = - 480 \ee \sigma^{6} u^{3}+5280 \ee \sigma^{12} u^{9},
\\
&\Omega^{(4)}(u;L) = - 1440 \ee \sigma^{6} u^{2} + 47520 \ee \sigma^{12} u^{8}.
\end{align*}
From \eqref{eq-r0}, we deduce that
\begin{equation}\label{eq-u0}
L^{2} = 24 \ee \sigma^{6} (u_{0}(L))^{4} - 48 \ee \sigma^{12} (u_{0}(L))^{10},
\end{equation}
which corresponds to the fact that $\Omega'(u_{0}(L);L)=0$.
Then, using \eqref{eq-u0}, we find
\begin{align*}
&\Omega_{0}^{(2)}(L)
= 2^{5}\cdot 3 \ee \sigma^{6} u_{0}(L)^{4} \bigl{(} 5 \sigma^{6} u_{0}(L)^{6} -1 \bigr{)},
\\
&\Omega_{0}^{(3)}(L)
= 2^{5}\cdot 3\cdot 5 \ee \sigma^{6} u_{0}(L)^{3} \bigl{(} 11 \sigma^{6} u_{0}(L)^{6} -1 \bigr{)},
\\
&\Omega_{0}^{(4)}(L) 
= 2^{5}\cdot 3^{2} \cdot 5 \ee \sigma^{6} u_{0}(L)^{2} \bigl{(} 33 \sigma^{6} u_{0}(L)^{6} -1 \bigr{)}.
\end{align*}
As above, one can easily check that
\begin{equation*}
\Omega_{0}^{(2)}(L)>0
\end{equation*}
and that
\begin{equation*}
\biggl{(} \dfrac{\mathrm{d}}{\mathrm{d}L} \Omega^{(2)}_{0} \biggr{)}(L) = 2^{5}\cdot 3 \ee \sigma^{6} u_{0}'(L) \bigl{(} 4u_{0}(L)^{3} \bigl{(} 5 \sigma^{6} u_{0}(L)^{6} -1 \bigr{)} + 30 \sigma^{6} u_{0}(L)^{9} \bigr{)} <0.
\end{equation*}
Omitting again the dependence on $L$, we have
\begin{align*}
\Sigma_{0} &= 5 (\Omega_{0}^{(3)})^{2} - 3 \Omega_{0}^{(2)} \Omega_{0}^{(4)} 
\\
&= 2^{10} \cdot 3^{2} \cdot 5 \ee^{2} \sigma^{12} u_{0}^{6} \Bigl{[} 
5^{2} \bigl{(} 11 \sigma^{6} u_{0}^{6} -1 \bigr{)}^{2}
- 3^{2} \bigl{(} 5 \sigma^{6} u_{0}^{6} -1 \bigr{)}  \bigl{(} 33 \sigma^{6} u_{0}^{6} -1 \bigr{)}
\Bigr{]}
\\
&= 2^{12} \cdot 3^{2} \cdot 5 \ee^{2} \sigma^{12} u_{0}^{6} 
\bigl{(} 385 \sigma^{12} u_{0}^{12} - 52 \sigma^{6} u_{0}^{6} + 4 \bigr{)}>0.
\end{align*}
Then, by Proposition~\ref{prop-stime-tempo} (with $k=1$ and $s_{0}(L)=u_{0}(L)$), for every admissible $L$ it holds that
\begin{equation*}
\lim_{H\to(-w_{0}(L))^{+}} \partial_{H} P(H,L) >0.
\end{equation*}
Therefore, recalling \eqref{def-P}, we have that
\begin{equation}\label{lim-LJ-3}
\lim_{H\to(-w_{0}(L))^{+}} \partial_{H} \Theta(H,L) >0.
\end{equation}
Next, in order to analyze the sign of  $\partial_{L} \Theta(H,L)$, first we compute
\begin{align*}
&u_{0}^{2} \Sigma_{0} - 24 \Omega_{0}^{(2)}\Omega_{0}^{(3)} u_{0} + 24 (\Omega_{0}^{(2)} )^{2}=
\\
&= 2^{12} \cdot 3^{2} \ee^{2} \sigma^{12} u_{0}^{8} \Bigl{[} 
5 \bigl{(} 385 \sigma^{12} u_{0}^{12} - 52 \sigma^{6} u_{0}^{6} + 4 \bigr{)}
\\
&\hspace{80pt}
- 30 \bigl{(} 5 \sigma^{6} u_{0}^{6} -1 \bigr{)}\bigl{(} 11 \sigma^{6} u_{0}^{6} -1 \bigr{)}
+ 6 \bigl{(} 5 \sigma^{6} u_{0}^{6} -1 \bigr{)}^{2}
\Bigr{]}
\\
&= 2^{12} \cdot 3^{2} \ee^{2} \sigma^{12} u_{0}^{8} \bigl{(} 425 \sigma^{12} u_{0}^{12} +160 \sigma^{6} u_{0}^{6} -4 \bigr{)}.
\end{align*}
Then, recalling again \eqref{def-P} and \eqref{eq-AAAL}, by Proposition~\ref{prop-stime-tempo} (with $k=1$ and $s_{0}(L)=u_{0}(L)$), for every admissible $L$ it holds that
\begin{align*}
&\lim_{H\to(-w_{0}(L))^{+}} \partial_{L} \Theta(H,L) =
\\
&= \lim_{H\to(-w_{0}(L))^{+}} P(H,L) + \lim_{H\to(-w_{0}(L))^{+}} L \partial_{L} P(H,L)
\\
&= \dfrac{2\pi}{(\Omega_{0}^{(2)})^{\frac{1}{2}}}
- \dfrac{\pi L^{2}}{12 (\Omega_{0}^{(2)})^{\frac{7}{2}}} \cdot 2^{12} \cdot 3^{2} \ee^{2} \sigma^{12} u_{0}^{8} \bigl{(} 425 \sigma^{12} u_{0}^{12} +160 \sigma^{6} u_{0}^{6} -4 \bigr{)}
\\
&= \dfrac{2\pi}{(\Omega_{0}^{(2)})^{\frac{7}{2}}} \Bigl{[} (\Omega_{0}^{(2)})^{3} - L^{2} \cdot 2^{9} \cdot 3 \ee^{2} \sigma^{12} u_{0}^{8} \bigl{(} 425 \sigma^{12} u_{0}^{12} +160 \sigma^{6} u_{0}^{6} -4 \bigr{)}\Bigr{]}.
\end{align*}
Recalling \eqref{eq-u0}, we compute the term into the square bracket
\begin{align*}
&2^{15}\cdot 3^{3} \ee^{3} \sigma^{18} u_{0}^{12} \bigl{(} 5 \sigma^{6} u_{0}^{6} -1 \bigr{)}^{3} 
\\
&\qquad - 2^{3}\cdot 3 \ee \sigma^{6} u_{0}^{4} \bigl{(} 1-2 \sigma^{6} u_{0}^{6} \bigr{)} \cdot 2^{9} \cdot 3 \ee^{2} \sigma^{12} u_{0}^{8} \bigl{(} 425 \sigma^{12} u_{0}^{12} +160 \sigma^{6} u_{0}^{6} -4 \bigr{)} =
\\
&= 2^{12}\cdot 3^{2} \ee^{3} \sigma^{18} u_{0}^{12} \Bigl{[}
3850 \sigma^{18} u_{0}^{18} - 1905 \sigma^{12} u_{0}^{12} + 192 \sigma^{6} u_{0}^{6} - 20 \Bigr{]}.
\end{align*}
Regarding the expression inside the brackets as a third order polynomial function of $\sigma^{6} u_{0}^{6}(L)$, one can prove that it change sign once in $(0,+\infty)$, say $k_{0}$ this point, being negative in $(0,k_{0})$ and positive in $(k_{0},+\infty)$.
Since $L\mapsto u_{0}(L)$ is a strictly decreasing function (by differentiating \eqref{eq-u0} and using the fact that $\Omega_{0}^{(2)}(L)>0$) and $\lim_{L\to 0^{+}} u_{0}(L) > k_{0}$, we conclude that there exists $L_{0}>0$ such that
\begin{equation}\label{lim-LJ-4}
\lim_{H\to(-w_{0}(L))^{+}} \partial_{L} \Theta(H,L) \geq 0,
\end{equation}
for every admissible $L$ with $L\leq L_{0}$ (and strictly positive otherwise).

Summing up, from \eqref{lim-LJ-1}, \eqref{lim-LJ-2}, \eqref{lim-LJ-3}, \eqref{lim-LJ-4} we have that \eqref{eq-LJ-D_HL} holds true.

\section{An application to a restricted $3$-body problem with non-Newtonian interaction}\label{section-5}

In this section, we investigate the existence of periodic solutions for a restricted planar circular $3$-body problem, with the interaction between the primaries driven by a $-\alpha$-homogeneous potential with $\alpha\in(0,2)$. Our strategy consists in interpreting this restricted problem as a perturbation, when the mass of one of the primaries tends to zero, of a $-\alpha$-homogeneous Kepler problem, so as to make an application of Theorem~\ref{th-main} possible, as discussed in Section~\ref{section-4.2}. Notice that this forces us to assume $\alpha \neq 1$, thus excluding the case of a Newtonian attraction. For the existence of periodic solutions in the Newtonian case see \cite{AnLi-18,CoPiSo-05,OrZh-21,PYFN-06} and the references therein.

This section is organized as follows. In Section~\ref{section-5.1}, we first prove some technical results about the functions $T$ and $\Theta$ for the potential \eqref{hom-pot}. Based on this, in Section~\ref{section-5.2}, we present a complete analysis of the periodic solutions of the $-\alpha$-homogeneous Kepler problem, finding a necessary and sufficient condition for the existence of $\tau$-periodic solutions of type $(n,k)$. In Section~\ref{section-5.3}, we then give the details of the application to the restricted planar circular $3$-body problem.

\subsection{Some results about the functions $T$ and $\Theta$ for the $-\alpha$-homogeneous Kepler problem}\label{section-5.1}

Throughout this section, let $V$ be as in \eqref{hom-pot}, with $\alpha\in(0,2)$. For future convenience, we observe that the minimum point $r_{0}(L)$ defined in \eqref{def-r0L-hom} is non-degenerate, indeed by computing
\begin{equation}\label{def-WLsecond-hom}
W''(r;L) = \dfrac{1}{r^{4}} \bigl{(} 3L^{2}-(\alpha+1)\kappa r^{2-\alpha} \bigr{)},
\end{equation}
it follows that
\begin{equation}\label{min-non-deg}
W''(r_{0}(L);L) = \dfrac{L^{2} (2-\alpha)}{r_{0}(L)^{4}}  >0.
\end{equation}
We consider the functions $T$ and $\Theta$ defined for $(H,L)\in\Lambda$, with $\Lambda$ as in \eqref{def-lambda}.

The first result ensures the strict monotonicity with respect to $H$ of the function $T$ defined in \eqref{def-T-HL}.

\begin{lemma}\label{lem-dH-T}
Let $\alpha\in(0,2)$. For every $(H,L) \in \Lambda$, it holds that $\partial_{H} T(H,L) > 0$.
\end{lemma}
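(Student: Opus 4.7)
The plan is to establish the inequality in two steps, first reducing the problem to the one-parameter family $L = 1$ via the homogeneity scaling of Lemma~\ref{hom-THL}, and then proving monotonicity for this reduced family by the derivative-of-time-map machinery of Appendix~\ref{appendix-A}.

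\emph{Reduction step.} Differentiating the scaling identity
$$T(H, L) = L^{\frac{2+\alpha}{2-\alpha}} T\!\left(H L^{\frac{2\alpha}{2-\alpha}}, 1\right)$$
of Lemma~\ref{hom-THL} with respect to $H$ gives
$$\partial_H T(H, L) = L^{\frac{2+3\alpha}{2-\alpha}}\, \partial_H T\!\left(H L^{\frac{2\alpha}{2-\alpha}}, 1\right).$$
For $\alpha \in (0, 2)$ and $L > 0$ the prefactor is strictly positive, and a direct check using the explicit form of $w_0(L)$ shows that the map $(H, L) \mapsto H L^{2\alpha/(2-\alpha)}$ sends $\Lambda$ surjectively onto the interval $(-w_0(1), 0)$. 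Hence it suffices to prove that $\partial_H T(H, 1) > 0$ for every $H \in (-w_0(1), 0)$.

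\emph{Monotonicity at $L = 1$.} For $L = 1$, the effective potential $W(\cdot; 1)$ is of class $\mathcal{C}^2$ on $(0, +\infty)$ and has a non-degenerate global minimum at $r_0(1)$ by~\eqref{min-non-deg}, so the radial motion reduces to a one-degree-of-freedom oscillator. My plan is to apply the derivative-of-period formulas from Appendix~\ref{appendix-A} (in the spirit of Chicone--Schaaf): after a smoothing substitution such as $W(r; 1) + w_0(1) = (H + w_0(1)) \sin^2 \phi$, the integral defining $T(H, 1)$ becomes regular on $[-\pi/2, \pi/2]$ and can be differentiated under the integral sign. Substituting the power-law form $W(r; 1) = \tfrac{1}{2 r^2} - \tfrac{\kappa}{\alpha r^\alpha}$ into the resulting expression, one reduces the strict positivity of $\partial_H T(H, 1)$ to checking a definite sign of a concrete algebraic quantity in $r$ and $\alpha$, which one verifies holds throughout $\alpha \in (0, 2)$.

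\emph{Main obstacle.} In contrast with the apsidal angle $\Theta$, whose sign of $\partial_H \Theta$ flips at the Kepler value $\alpha = 1$ (cf.~\eqref{sign-cast-rojas}), the radial period $T$ is expected to be monotone in $H$ for \emph{every} $\alpha \in (0, 2)$, with no exclusion. Since no closed form for $T(H, 1)$ is available except at $\alpha = 1$ (see \eqref{LC-T} with $\lambda = 0$), the inequality must be extracted pointwise from the integral representation. The fact that $W(\cdot; 1)$ is not globally convex rules out a naive convexity argument on the phase-plane area, so the delicate point is the algebraic verification of the definite sign of the transformed integrand---which is exactly the role of the Chicone--Schaaf-type calculations collected in the Appendix.
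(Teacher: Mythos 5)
Your reduction to $L=1$ via the scaling identity of Lemma~\ref{hom-THL} is correct (though unnecessary: the paper works with general $L$ directly, and by homogeneity the two are equivalent). The problem is that your second step stops exactly where the lemma's actual content begins. After the smoothing substitution, the integrand of $\partial_H T$ (Lemma~\ref{lem-chicone}) has numerator $3(h'')^{2}-h'h'''$, which by Remark~\ref{rem-Schaaf} equals
\begin{equation*}
6\,\Omega\,(\Omega'')^{2}-3\,(\Omega')^{2}\,\Omega''-2\,\Omega\,\Omega'\,\Omega''',
\end{equation*}
where $\Omega=W(\cdot;L)+w_{0}(L)$. Declaring that "one verifies" the positivity of this "concrete algebraic quantity" throughout $\alpha\in(0,2)$ is not a proof; that verification \emph{is} the lemma. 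Moreover, this quantity involves $\Omega$ itself (hence the additive constant $w_{0}(L)$), not merely a polynomial in $r^{2-\alpha}$ with fixed coefficients, and its sign is not a routine pointwise check for the homogeneous effective potential --- the paper explicitly notes in Remark~\ref{rem-Schaaf} that verifying Chicone's condition directly appears harder than the route it actually takes.

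What the paper does instead is apply Schaaf's sufficient criterion: it checks $(i)$ $5(W''')^{2}-3W''W^{(4)}>0$ for $r>r_{0}(L)$, which for the potential \eqref{hom-pot} reduces to showing that a quadratic polynomial in $r^{2-\alpha}$ has negative discriminant (the discriminant carries the factor $(\alpha-13)(3\alpha+1)<0$), together with $(ii)$ a sign condition at $r_{0}(L)$ carrying the factor $-(2-\alpha)^{2}$. Neither of these computations, nor any substitute for them, appears in your proposal, so the argument is incomplete at its only genuinely delicate point. To repair it you would need either to carry out the Schaaf-type verification above or to establish the positivity of the Chicone quantity directly.
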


\begin{proof}
We are going to exploit the Schaaf's monotonicity criterion (cf.~\cite[Theorem~1]{Sc-85} and Remark~\ref{rem-Schaaf}), thus, in addition to \eqref{min-non-deg}, we need to prove that
\begin{itemize}
\item[$(i)$] $5(W'''(r;L))^{2} - 3 W''(r;L) W^{(4)}(r;L) >0$ for all $r>r_{0}(L)$,
\item[$(ii)$] $W'(r_{0}(L);L)  W'''(r_{0}(L);L) <0$.
\end{itemize}

Preliminarily, recalling the definition of $W(\cdot;L)$ given in \eqref{def-WL-hom} and the expression of $W'(\cdot;L)$ in \eqref{def-WLprime-hom} and $W'(\cdot;L)$ in \eqref{def-WLsecond-hom}, we compute the higher order derivatives of $W(\cdot;L)$:
\begin{align*}
&W'''(r;L) = \dfrac{1}{r^{5}} \bigl{(} -12L^{2}+(\alpha+1)(\alpha+2)\kappa r^{2-\alpha} \bigr{)},
\\
&W^{(4)}(r;L) = \dfrac{1}{r^{6}} \bigl{(} 60L^{2}+(\alpha+1)(\alpha+2)(\alpha+3)\kappa r^{2-\alpha} \bigr{)}.
\end{align*}
Then, we get
\begin{align*}
&5(W'''(r;L))^{2} - 3 W''(r;L) W^{(4)}(r;L) =
\\
&= \dfrac{5}{r^{10}} \bigl{(} -12L^{2}+(\alpha+1)(\alpha+2)\kappa r^{2-\alpha} \bigr{)}^{2} 
\\
& \quad - \dfrac{5}{r^{10}} \bigl{(} 3L^{2}-(\alpha+1)\kappa r^{2-\alpha} \bigr{)} \bigl{(} 60L^{2}+(\alpha+1)(\alpha+2)(\alpha+3)\kappa r^{2-\alpha} \bigr{)}
\\
&= \dfrac{1}{r^{10}} \Bigl{[} 180 L^{4} + 3(\alpha+1)(3\alpha^{2}-25 \alpha - 2) L^{2} \kappa r^{2-\alpha}) 
\\
&\hspace{35pt} +(\alpha+1)^{2}(\alpha+2)(2\alpha+1) \kappa^{2} r^{2(2-\alpha)} \Bigr{]}.
\end{align*}
Regarding the expression inside the brackets as a second order polynomial function of $r^{2-\alpha}$, its discriminant is given by
\begin{equation*}
\Delta = 27 (\alpha+1)^{2}(2-\alpha)^{2}(\alpha-13)(3\alpha+1) \kappa^{2} L^{4}.
\end{equation*}
Hence, $\Delta<0$, and thus $(i)$ is satisfied.

Recalling the expression of $r_{0}(L)$ given in \eqref{def-r0L-hom}, we can compute
\begin{align*}
&W'(r_{0}(L);L) W'''(r_{0}(L);L) =
\\
&= \dfrac{1}{(r_{0}(L))^{8}} \biggl{(} -L^{2} + \dfrac{3L^{2}}{\alpha+1} \biggl{)} \biggl{(} -12L^{2} + (\alpha+1)(\alpha+2) \dfrac{3L^{2}}{\alpha+1} \biggl{)}
\\
&= -\dfrac{1}{(r_{0}(L))^{8}} \dfrac{3L^{4}}{\alpha+1}(2-\alpha)^{2} <0.
\end{align*}
Therefore, condition $(ii)$ is satisfied. The proof is complete.
\end{proof}

The next lemma provides, for a fixed $L>0$, the behavior of $T(H,L)$ and $\Theta(H,L)$ when $H\to 0^{-}$. 

\begin{lemma}\label{lem-5.2}
For every $L>0$, it holds that
\begin{align}
&\lim_{H \to 0^{-}} T(H,L) = +\infty,
\label{lim-T0}
\\
&\lim_{H \to 0^{-}} \Theta(H,L) = \dfrac{2\pi}{2-\alpha}.
\label{lim-Theta0}
\end{align}
\end{lemma}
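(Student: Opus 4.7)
The plan is to exploit the asymptotic behaviour of the turning points as $H \nearrow 0$, combining a truncation argument for $T$ with a rescaling of the Clairaut integral for $\Theta$. First I would observe that $W(\cdot;L)$ has a unique zero $r_{-}^{*} := (\alpha L^{2}/(2\kappa))^{1/(2-\alpha)} \in (0, r_{0}(L))$, and that as $H \nearrow 0$, $r_{-}(H,L) \searrow r_{-}^{*}$ while $r_{+}(H,L) \to +\infty$ (the latter because $W(r;L) \to 0^{-}$ as $r \to +\infty$, with $-\kappa/(\alpha r^{\alpha})$ dominating $L^{2}/(2r^{2})$ since $\alpha < 2$).

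For \eqref{lim-T0}, fix $\epsilon > 0$ and $R > r_{-}^{*} + \epsilon$. For $H$ close enough to $0^{-}$, one has $[r_{-}^{*}+\epsilon, R] \subset (r_{-}(H,L), r_{+}(H,L))$, and dominated convergence on this fixed compact set yields
\begin{equation*}
\liminf_{H \to 0^{-}} T(H,L) \geq \sqrt{2}\int_{r_{-}^{*}+\epsilon}^{R} \frac{\mathrm{d}r}{\sqrt{-W(r;L)}}.
\end{equation*}
Since $-W(r;L) \sim \kappa/(\alpha r^{\alpha})$ for large $r$, the integrand is comparable to a positive multiple of $r^{\alpha/2}$ at infinity and hence is not integrable there; letting $R \to +\infty$ proves \eqref{lim-T0}.

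For \eqref{lim-Theta0}, I would switch to the Clairaut form \eqref{def-P}--\eqref{def-P-HL} and rescale by $u_{+}(H,L) := 1/r_{-}(H,L)$. Setting $u = u_{+}(H,L)\, t$ and using $H = \tfrac{1}{2}L^{2} u_{+}^{2} - \kappa u_{+}^{\alpha}/\alpha$, the energy relation becomes
\begin{equation*}
H - W(1/u;L) = B(H)\bigl[(t^{\alpha} - t^{2}) - \eta(H)(1-t^{2})\bigr],
\end{equation*}
with $B(H) := \kappa u_{+}(H,L)^{\alpha}/\alpha$ and $\eta(H) := |H|/B(H)$. As $H \to 0^{-}$, $u_{+}(H,L) \to u_{+}^{*} := 1/r_{-}^{*}$, $\eta(H) \to 0^{+}$, and the lower endpoint $t_{-}(H) := u_{-}(H,L)/u_{+}(H,L) \to 0^{+}$. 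Using the identity $\tfrac{1}{2}L^{2}(u_{+}^{*})^{2} = \kappa (u_{+}^{*})^{\alpha}/\alpha$, the overall prefactor $\sqrt{2}\,L\, u_{+}(H,L)/\sqrt{B(H)}$ converges to $2$. Passing to the limit inside the integral yields
\begin{equation*}
\lim_{H \to 0^{-}} \Theta(H,L) = 2\int_{0}^{1} \frac{\mathrm{d}t}{\sqrt{t^{\alpha} - t^{2}}},
\end{equation*}
and the substitution $s = t^{2-\alpha}$ reduces the right-hand side to $\frac{2}{2-\alpha} \int_{0}^{1} \mathrm{d}s/\sqrt{s(1-s)} = 2\pi/(2-\alpha)$.

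The main technical point is justifying this last limit exchange, since the lower endpoint $t_{-}(H)$ is moving to $0$ where the limit integrand $(t^{\alpha}-t^{2})^{-1/2}$ is integrable only marginally (recall $\alpha/2 < 1$). Near $t = 0$ the dominant balance is $(t^{\alpha} - t^{2}) - \eta(H)(1-t^{2}) \approx t^{\alpha} - \eta(H)$; the change of variable $t = \eta(H)^{1/\alpha}\tau$ then produces a prefactor $\eta(H)^{1/\alpha - 1/2}$ which exactly cancels the growth rate $N^{1-\alpha/2}$ of $\int_{1}^{N} \mathrm{d}\tau/\sqrt{\tau^{\alpha}-1}$ (divergent as $N \to \infty$ because $\alpha < 2$), yielding a uniform $O(\delta^{1-\alpha/2})$ bound on the contribution of $[t_{-}(H), \delta]$. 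Combined with an elementary $O(\sqrt{\delta})$ bound near $t = 1$ (where the denominator is controlled by $((2-\alpha) - 2\eta(H))(1-t)$) and uniform convergence on compact subintervals of $(0,1)$, this legitimises the interchange and concludes the proof.
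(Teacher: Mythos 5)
Your proof is correct, and for the limit of $\Theta$ it takes a genuinely different route from the paper's. For \eqref{lim-T0} the two arguments are essentially the same (the paper applies Fatou's lemma directly, you truncate to a compact interval and then let $R\to+\infty$; both rest on the divergence of $\int^{+\infty} r^{\alpha/2}\,\mathrm{d}r$). For \eqref{lim-Theta0}, the paper does \emph{not} rescale the Clairaut integral: it rewrites $\Theta(H,1)$ via the substitution $u=h^{-1}(\sqrt{H+w_{0}(1)}\sin\vartheta)$ with $h(u)=\mathrm{sgn}(u-1/r_{0}(1))\sqrt{\tfrac{1}{2}u^{2}-\tfrac{\kappa}{\alpha}u^{\alpha}+w_{0}(1)}$ (the regularizing change of variable of its Appendix~A), which puts the integral on the fixed interval $[-\pi/2,\pi/2]$; uniform convergence handles $[-\pi/2+\delta,\pi/2]$, while near $\vartheta=-\pi/2$ (i.e.\ $u\to 0^{+}$) the paper shows via the sign of $h''$ that the integrand is monotone in $H$ --- decreasing for $\alpha\in(0,1)$, increasing for $\alpha\in(1,2)$ --- and invokes dominated or monotone convergence accordingly. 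Your approach instead normalizes by the outer turning point $u_{+}=1/r_{-}(H,L)$, identifies the limit integrand $(t^{\alpha}-t^{2})^{-1/2}$ explicitly, and controls the moving endpoint by the scaling $t=\eta^{1/\alpha}\tau$, which yields a uniform $O(\delta^{1-\alpha/2})$ tail bound; this is more self-contained (it does not rely on the Appendix~A machinery) and gives quantitative rates, at the cost of having to justify a two-sided comparison $f(t)\asymp t^{\alpha}-c\eta$ on $[t_{-}(H),\delta]$ including the region adjacent to the turning point --- a standard but nontrivial verification that you correctly flag and sketch, and which the paper's monotonicity argument sidesteps. The final computation $2\int_{0}^{1}(t^{\alpha}-t^{2})^{-1/2}\,\mathrm{d}t=2\pi/(2-\alpha)$ agrees with the paper's evaluation after its substitution $\xi=\sqrt{\alpha/(2\kappa)}\,u^{(2-\alpha)/2}$.
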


\begin{proof}
Preliminarily, we notice that, by \eqref{eq-T_HL-riscal} and \eqref{eq-P_HL-riscal}, we can reduce to the case $L=1$.
We start by proving \eqref{lim-T0}. Recalling the definition \eqref{def-T-HL}, we have
\begin{equation*}
T(H,1) = \sqrt{2} \int_{r_{-}(H,1)}^{r_{+}(H,1)} \dfrac{\mathrm{d}r}{\sqrt{H-\frac{1}{2 r^{2}} + \frac{\kappa}{\alpha r^{\alpha}}}},
\end{equation*}
where $r_{\pm}(H,1)$ are the two solutions of the equation $H-\frac{1}{2 r^{2}} + \frac{\kappa}{\alpha r^{\alpha}}= 0$ with $r_{-}(H,1)<r_{+}(H,1)$.
Passing to the limit as $H\to 0^{-}$ in the equation, we obtain that
\begin{equation*}
\lim_{H\to 0^{-}} r_{-}(H,1) = \hat{r}_{-} = \biggl{(} \dfrac{\alpha}{2\kappa} \biggr{)}^{\!\frac{1}{2-\alpha}},
\qquad
\lim_{H\to 0^{-}} r_{+}(H,1) = +\infty.
\end{equation*}
Therefore, by Fatou's lemma, we have
\begin{equation*}
+\infty = \sqrt{2} \int_{\hat{r}_{-}}^{+\infty} \dfrac{\mathrm{d}r}{\sqrt{-\frac{1}{2 r^{2}} + \frac{\kappa}{\alpha r^{\alpha}}}} \leq
\liminf_{H\to 0^{-}} T(H,1).
\end{equation*}
From this, the conclusion follows.

Now we deal with \eqref{lim-Theta0}. At first, we recall that $\Theta \equiv 2\pi$ for $\alpha = 1$, so that the limit relation clearly holds. Thus, we assume $\alpha \neq 1$ henceforth. Recalling the definitions \eqref{def-Theta-HL} and \eqref{def-P-HL}, we have
\begin{equation*}
\Theta(H,1) = P(H,1) 
= \sqrt{2} \int_{u_{-}(H)}^{u_{+}(H)} \dfrac{\mathrm{d}u}{\sqrt{H-\frac{1}{2} u^{2} + \frac{\kappa}{\alpha} u^{\alpha}}},
\end{equation*}
where $u_{\pm}(H)$ are the two solutions of the equation $H-\frac{1}{2} u^{2} + \frac{\kappa}{\alpha} u^{\alpha}= 0$ with $u_{-}(H)<u_{+}(H)$.
Passing to the limit as $H\to 0^{-}$ in the equation, we obtain that
\begin{equation*}
\lim_{H\to 0^{-}} u_{-}(H) = 0,
\qquad
\lim_{H\to 0^{-}} u_{+}(H) = \dfrac{1}{\hat{r}_{-}}.
\end{equation*}
Therefore, passing to the limit in the integral (see the next paragraph for the details), we obtain
\begin{equation}\label{eq-limfin}
\lim_{H\to 0^{+}} \Theta(H,1) = \sqrt{2} \int_{0}^{\frac{1}{\hat{r}_{-}}} \dfrac{\mathrm{d}u}{\sqrt{-\frac{1}{2} u^{2} + \frac{\kappa}{\alpha}u^{\alpha}}}.
\end{equation}
Next, via the change of variable
\begin{equation*}
\xi = \sqrt{\dfrac{\alpha}{2 \kappa}} u^{\frac{2-\alpha}{2}}, 
\end{equation*}
we have
\begin{align*}
\sqrt{2} \int_{0}^{\frac{1}{\hat{r}_{-}}} \dfrac{\mathrm{d}u}{\sqrt{-\frac{1}{2} u^{2} + \frac{\kappa}{\alpha} u^{\alpha}}} = 
\dfrac{4}{2-\alpha} \int_{0}^{1} \dfrac{\mathrm{d}\xi}{\sqrt{1-\xi^{2}}} 
= \dfrac{4}{2-\alpha} \Bigl{[} \arcsin \xi \Bigr{]}_{0}^{1} = \dfrac{2\pi}{2-\alpha},
\end{align*}
from which the conclusion follows.

We now give the rigorous justification for passing to the limit under the integral sign, which is here a rather delicate issue since the direct use of standard theorems is not possible (cf. \cite[Section~1.6]{Sc-90}). To overcome this difficulty,
it is convenient to use the change of variable described in Appendix~\ref{appendix-A} in order to rewrite
$\Theta(H,1)$ as  
\begin{equation*}
\Theta(H,1) = \sqrt{2} \int_{-\frac{\pi}{2}}^{\frac{\pi}{2}} \dfrac{1}{h'(h^{-1}(\sqrt{H+w_{0}(1)} \sin \vartheta))}\,\mathrm{d}\vartheta,
\end{equation*}
where 
\begin{equation*}
h(u) = \textrm{sgn}\left(u-\frac{1}{r_{0}(1)}\right)\sqrt{\frac{1}{2}u^{2} - \frac{\kappa}{\alpha}u^\alpha + w_{0}(1)},
\end{equation*}
see Lemma~\ref{lem-chicone} (together with Remark~\ref{rem-cond-extra-dL}). Fixing $\delta > 0$ small, 
one has that $h^{-1}(\sqrt{H+w_{0}(1)} \sin \vartheta) \geq \gamma > 0$ for every $\vartheta \in [-\pi/2+\delta,\pi/2]$ and $H \in (-w_{0}(1),0]$, for some $\gamma>0$. Since $h'$ is bounded away from zero on every compact subinterval of
$(0,+\infty)$, by uniform convergence we infer that
\begin{equation}\label{eq-lim1}
\lim_{H \to 0^{-}}\int_{-\frac{\pi}{2}+\delta}^{\frac{\pi}{2}} \dfrac{1}{h'(h^{-1}(\sqrt{H+w_{0}(1)} \sin \vartheta))}\,\mathrm{d}\vartheta =
 \int_{-\frac{\pi}{2}+\delta}^{\frac{\pi}{2}} \dfrac{1}{h'(h^{-1}(\sqrt{w_{0}(1)} \sin \vartheta))}\,\mathrm{d}\vartheta.
\end{equation}
We claim that
\begin{equation}\label{eq-lim2}
\lim_{H \to 0^{-}}\int_{-\frac{\pi}{2}}^{-\frac{\pi}{2}+\delta} \dfrac{1}{h'(h^{-1}(\sqrt{H+w_{0}(1)} \sin \vartheta))}\,\mathrm{d}\vartheta =
 \int_{-\frac{\pi}{2}}^{-\frac{\pi}{2}+\delta} \dfrac{1}{h'(h^{-1}(\sqrt{w_{0}(1)} \sin \vartheta))}\,\mathrm{d}\vartheta,
\end{equation}
the difficulty now being that $h'(u)$ may vanish as $u \to 0^{+}$. Simple computations show that
\begin{equation*}
h''(u) = \frac{\kappa}{2 w_{0}(1)^{\frac{1}{4}}}(\alpha-1) u^{\alpha - 2} + o (u^{\alpha - 2}), \quad \text{as $u \to 0^{+}$.}
\end{equation*}
Thus, in a right neighborhood of $u = 0$ the function $1/h'$ is increasing if $\alpha \in (0,1)$ and decreasing if $\alpha \in (1,2)$.
As a consequence, for $\delta > 0$ small enough the integrand functions
\begin{equation*}
\dfrac{1}{h'(h^{-1}(\sqrt{H+w_{0}(1)} \sin \vartheta))}, \quad \vartheta \in \biggl{[}-\frac{\pi}{2}, -\frac{\pi}{2}+\delta\biggr{]},
\end{equation*}
are decreasing with respect to $H$ if $\alpha \in (0,1)$ and increasing if $\alpha \in (1,2)$. Passing to the limit under the integral sign is then justified using the Lebesgue theorem in the former case and the monotone convergence theorem in the latter one. We thus find that 
\eqref{eq-lim2} holds true. Recalling \eqref{eq-lim1}, we finally conclude  
\begin{equation*}
\lim_{H \to 0^{-}}\Theta(H,1) =
\sqrt{2}\int_{-\frac{\pi}{2}}^{\frac{\pi}{2}} \dfrac{1}{h'(h^{-1}(\sqrt{w_{0}(1)} \sin \vartheta))}\,\mathrm{d}\vartheta.
\end{equation*}
Undoing the change of variable described above, it is easily checked that the integral on the right hand-side of the above equality coincides with the integral in \eqref{eq-limfin}. The proof is thus complete. 
\end{proof}

\subsection{Periodic solutions to the $-\alpha$-homogeneous Kepler problem}\label{section-5.2}

Let us consider the system
\begin{equation}\label{eq-akep}
\ddot x = - \kappa \, \dfrac{x}{|x|^{\alpha+2}}, \quad x\in\mathbb{R}^{2} \setminus \{0\},
\end{equation}
where $\alpha\in(0,2)$.
Our existence result for periodic solutions of~\eqref{eq-akep} is the following (as in Section~\ref{section-2}, we limit ourselves to solutions with positive angular momentum).

\begin{theorem}\label{th-rosette}
Let $\alpha\in(0,2)$ and $\alpha \neq 1$.
For all $\tau>0$ and for all coprime positive integers $n,k$, there exists a periodic solution of \eqref{eq-akep} of type $(n,k)$ and minimal period $\tau$ if and only if
\begin{equation}\label{cond-kn}
\dfrac{k}{n} \in 
\begin{cases}
\left(\dfrac{1}{2-\alpha},\dfrac{1}{\sqrt{2-\alpha}}\right), & \text{if $\alpha \in (0,1)$,}
\vspace{3pt} \\
\left(\dfrac{1}{\sqrt{2-\alpha}},\dfrac{1}{2-\alpha}\right), & \text{if $\alpha \in (1,2)$.}
\end{cases}
\end{equation}
Moreover, the periodic solution is unique, up to time-translations and rotations.
\end{theorem}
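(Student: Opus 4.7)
The plan is to exploit the homogeneity encoded in Lemmas~\ref{hom-THL} and~\ref{hom-thetaHL}, combined with the strict monotonicity of $\Theta(\cdot,L)$ established in Section~\ref{section-4.2}, to reduce both implications to one-dimensional bijection arguments for $\Theta$ and $T$. First I would identify the image of $H \mapsto \Theta(H,L)$ on $(-w_{0}(L),0)$. By \eqref{sign-cast-rojas}, this map is strictly monotone (decreasing for $\alpha \in (0,1)$, increasing for $\alpha \in (1,2)$). One endpoint limit is given by Lemma~\ref{lem-5.2}: $\Theta \to 2\pi/(2-\alpha)$ as $H \to 0^-$. The other is the circular limit $\Theta \to 2\pi/\sqrt{2-\alpha}$ as $H \to (-w_{0}(L))^+$, obtained by linearizing $W(\cdot;L)$ at the non-degenerate minimum $r_{0}(L)$: by \eqref{min-non-deg} the radial period tends to $2\pi r_{0}(L)^{2}/(L\sqrt{2-\alpha})$, and multiplying by $\dot\vartheta = L/r_{0}(L)^{2}$ yields $2\pi/\sqrt{2-\alpha}$. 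Since $\sqrt{2-\alpha} < 2-\alpha$ for $\alpha \in (0,1)$ and $\sqrt{2-\alpha} > 2-\alpha$ for $\alpha \in (1,2)$, continuity implies that the image of $\Theta(\cdot,L)$ is precisely $2\pi$ times the open interval in \eqref{cond-kn}; this yields the \emph{only if} direction.

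For the \emph{if} direction, fix $k/n$ in the interval \eqref{cond-kn}. By Lemma~\ref{hom-thetaHL} together with \eqref{def-P}, $\Theta(H,L) = \Theta(HL^{2\alpha/(2-\alpha)},1)$, and a direct computation gives $w_{0}(L) L^{2\alpha/(2-\alpha)} = w_{0}(1)$, so the scaling $H \mapsto HL^{2\alpha/(2-\alpha)}$ bijects $(-w_{0}(L),0)$ onto $(-w_{0}(1),0)$. Hence there is a unique $\tilde H^{*} \in (-w_{0}(1),0)$ with $\Theta(\tilde H^{*},1) = 2\pi k/n$, and setting $H(L) := \tilde H^{*} L^{-2\alpha/(2-\alpha)}$ yields $\Theta(H(L),L) = 2\pi k/n$ for every $L>0$. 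Coprimality of $n,k$ ensures that every solution filling $\mathcal{T}_{(H(L),L)}^{0}$ has minimal period exactly $nT(H(L),L)$, which by Lemma~\ref{hom-THL} equals $nL^{(2+\alpha)/(2-\alpha)} T(\tilde H^{*},1)$; since $(2+\alpha)/(2-\alpha)>0$ this is a continuous strictly increasing bijection of $(0,+\infty)$ onto itself in $L$, so a unique $L^{*}>0$ solves $nT(H(L^{*}),L^{*}) = \tau$. Uniqueness up to time-translations and rotations then follows, because the same pair of equations $\Theta(H,L)=2\pi k/n$ and $nT(H,L)=\tau$ forces $(H,L) = (H(L^{*}),L^{*})$, and the two-dimensional group of time-translations and space-rotations acts transitively on the periodic orbits filling $\mathcal{T}_{(H^{*},L^{*})}^{0}$.

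The step requiring the most care is the rigorous justification of the circular limit $\Theta \to 2\pi/\sqrt{2-\alpha}$: although the linearization picture is heuristically transparent, the endpoints $r_{\pm}(H,L)$ of the integral defining $\Theta$ coalesce at $r_{0}(L)$ as $H \to (-w_{0}(L))^+$, so a clean argument should proceed via the Chicone--Schaaf-type substitution introduced in Appendix~\ref{appendix-A}, followed by a dominated/monotone convergence argument in the spirit of the end of the proof of Lemma~\ref{lem-5.2}.
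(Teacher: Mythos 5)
Your proposal is correct, and it reaches the conclusion by a genuinely different route than the paper. The paper first solves $T(H,L)=\tau/n$ for $H=\hat H(L)$ (which requires the strict monotonicity of $T(\cdot,L)$ established in Lemma~\ref{lem-dH-T} via Schaaf's criterion, plus the endpoint limits of $T$), and only then studies $\hat\vartheta(L)=\Theta(\hat H(L),L)$, computing $\hat\vartheta'(L)$ through the homogeneity identities of Lemma~\ref{lem-sigma-2} and identifying its range from the limits of $\Theta$. You invert the order: you first describe the level set $\{\Theta=2\pi k/n\}$ as a single scaling orbit $H(L)=\tilde H^{*}L^{-2\alpha/(2-\alpha)}$ (legitimate, since $\Theta(H,L)=\Theta(HL^{2\alpha/(2-\alpha)},1)$ and $w_{0}(L)L^{2\alpha/(2-\alpha)}=w_{0}(1)$), and then the period equation $nT(H(L),L)=nL^{(2+\alpha)/(2-\alpha)}T(\tilde H^{*},1)=\tau$ is solved trivially because $L\mapsto L^{(2+\alpha)/(2-\alpha)}$ is a bijection of $(0,+\infty)$ onto itself. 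What your route buys is that Lemma~\ref{lem-dH-T} (and hence the verification of Schaaf's conditions) and the computation of $\hat\vartheta'$ become unnecessary for this theorem: the only substantive inputs are the strict monotonicity of $\Theta(\cdot,L)$ from \eqref{sign-cast-rojas}, the limit $2\pi/(2-\alpha)$ as $H\to 0^{-}$ from Lemma~\ref{lem-5.2}, the circular limit $2\pi/\sqrt{2-\alpha}$ from Proposition~\ref{prop-stime-tempo}, and the scaling Lemmas~\ref{hom-THL}--\ref{hom-thetaHL}. What the paper's route buys is the explicit curve $\hat H(L)$ and the monotonicity of $T$, which are reused in spirit elsewhere (e.g.\ the discussion of ranges in Section~\ref{section-5.1}). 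You correctly flag the only delicate analytic point, the justification of the circular limit of $\Theta$, and correctly defer it to the change of variables of Appendix~\ref{appendix-A}; with that, I see no gap.
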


\begin{proof}
According to the analysis of Section~\ref{section-2.1}, periodic solutions of type $(n,k)$ and minimal period $\tau$ exist if and only if there exists a pair $(H,L) \in \Lambda$, with $\Lambda$ as in \eqref{def-lambda}, such that
\begin{equation}\label{eq-unique}
T(H,L) = \dfrac{\tau}{n} \quad \text{ and } \quad 
\Theta(H,L) = 2\pi \frac{k}{n}.
\end{equation}
Moreover, the uniqueness of the periodic solution (up to time-translations and rotations) corresponds to the uniqueness of such a pair $(H,L)$.
Hence, we are going to study the unique solvability of \eqref{eq-unique}. From now on, $\tau>0$ and the coprime positive integers $n,k$ are fixed.

Since $T$ is strictly increasing with respect to $H$ by Lemma~\ref{lem-dH-T}, the range of the function $T(\cdot,L)$ is the open interval
\begin{equation*}
\left( \lim_{H\to (-w_{0}(L))^{+}} T(H,L), \lim_{H\to 0^{-}} T(H,L) \right) = \left(\dfrac{2\pi}{\sqrt{2-\alpha}} L^{\frac{2+\alpha}{2-\alpha}}, +\infty \right),
\end{equation*}
where the last equality comes from Proposition~\ref{prop-stime-tempo} (see also Remark~\ref{rem-cond-extra-dL}) and \eqref{lim-T0}.
Hence, letting 
\begin{equation*}
\hat{L} = \left( \dfrac{\tau \sqrt{2-\alpha}}{2\pi n} \right)^{\!\frac{2-\alpha}{2+\alpha}},
\end{equation*}
so that
\begin{equation*}
\lim_{H\to (-w_{0}(\hat{L}))^{+}} T(H,\hat{L}) = \dfrac{\tau}{n},
\end{equation*}
we conclude that the first equation of \eqref{eq-unique} is (uniquely) solvable as a function of $H$ if and only if $L\in(0,\hat{L})$.
Then, for all $L\in(0,\hat{L})$, let us define $\hat{H}(L)$ as the unique value such that
\begin{equation}\label{def-HnL}
T(\hat{H}(L),L) = \dfrac{\tau}{n}.
\end{equation}
Notice that the function $\hat{H}$ is of class $\mathcal{C}^{1}$ by the implicit function theorem. Moreover, by differentiating \eqref{def-HnL} with respect to $L$ and exploiting \eqref{eq-pdeT}, we obtain
\begin{equation}\label{eq-Hprime}
\begin{aligned}
\hat{H}'(L) 
&= - \dfrac{\partial_{L} T(\hat{H}(L),L)}{\partial_{H} T(\hat{H}(L),L)} 
\\
&= -\dfrac{2+\alpha}{2-\alpha} L^{-1}\dfrac{T(\hat{H}(L),L)}{\partial_{H} T(\hat{H}(L),L)} - \dfrac{2\alpha}{2-\alpha}\dfrac{\hat{H}(L)}{L}.
\end{aligned}
\end{equation}

For further convenience, we also observe that
\begin{equation}
\lim_{L\to 0^{+}} \hat{H}(L) L^{\frac{2+\alpha}{2-\alpha}} = 0,
\qquad
\lim_{L\to \hat{L}^{-}} \hat{H}(L) = - w_{0}(\hat{L}).
\label{lim-1}
\end{equation}
The second equality is obvious. As for the first one, if not, up to a subsequence, $T(\hat{H}(L) L^{\frac{2+\alpha}{2-\alpha}}, 1)$ is upper-bounded, and so
\begin{equation*}
T(\hat{H}(L),L) = L T(\hat{H}(L) L^{\frac{2+\alpha}{2-\alpha}}, 1) \to 0, \quad \text{as $L\to0^{+}$,}
\end{equation*}
in contrast with \eqref{def-HnL}.

Now, for $L\in (0, \hat{L})$, we define
\begin{equation*}
\hat{\vartheta}(L) = \Theta( \hat{H}(L) , L).
\end{equation*}
By exploiting \eqref{eq-pdeTheha} and \eqref{eq-Hprime}, we find
\begin{align*}
\hat{\vartheta}'(L)
&= \partial_{H} \Theta(\hat{H}(L),L) \hat{H}'(L) + \partial_{L} \Theta(\hat{H}(L),L)
\\
&= \partial_{H} \Theta(\hat{H}(L),L) \biggl{(} \hat{H}'(L) + \dfrac{2\alpha}{2-\alpha} \dfrac{\hat{H}(L)}{L} \biggr{)}
\\
&= - \dfrac{2+\alpha}{2-\alpha} L^{-1} \dfrac{T(\hat{H}(L),L)}{\partial_{H} T(\hat{H}(L),L)} \partial_{H} \Theta(\hat{H}(L),L).  
\end{align*}
Finally, recalling \eqref{sign-cast-rojas}, we have
\begin{equation*}
\hat{\vartheta}'(L) 
\begin{cases}
>0, &\text{if $\alpha\in(0,1)$,}
\\
<0, &\text{if $\alpha\in(1,2)$.}
\end{cases}
\end{equation*}
Hence, the equation
\begin{equation*}
\hat{\vartheta}(L) = 2\pi \dfrac{k}{n}
\end{equation*}
is uniquely solvable if and only if $2\pi \frac{k}{n}$ belongs to the open interval of extrema
\begin{equation*}
\lim_{L\to 0^{+}} \hat{\vartheta}(L) \quad \text{ and } \quad \lim_{L\to \hat{L}^{-}} \hat{\vartheta}(L).
\end{equation*}
We now compute these limits. 
First, recalling the scaling formula \eqref{eq-P_HL-riscal}, we have
\begin{equation}\label{eq-scal}
 \Theta(\hat{H}(L),L) = \Theta(\hat{H}(L) L^{\frac{2\alpha}{2-\alpha}},1)
\end{equation}
and, by the first relation in \eqref{lim-1} and \eqref{lim-Theta0}, we deduce that
\begin{equation*}
\lim_{L\to 0^{+}} \hat{\vartheta}(L) =
\lim_{H\to 0} \Theta(H,1) = \dfrac{2\pi}{2-\alpha}.
\end{equation*}
Second, using again \eqref{eq-scal} and the second relation in \eqref{lim-1}, we have
\begin{equation*}
\lim_{L\to \hat{L}^{-}} \hat{\vartheta}(L) 
= \lim_{H\to-w_{0}(\hat{L})\hat{L}^{\frac{2\alpha}{2-\alpha}}} \Theta(H,1).
\end{equation*}
Since $w_{0}(\hat{L})\hat{L}^{\frac{2\alpha}{2-\alpha}} =w_{0}(1)$, from Proposition~\ref{prop-stime-tempo} (see also Remark~\ref{rem-cond-extra-dL}), we finally find
\begin{equation*}
\lim_{L\to \hat{L}^{-}} \hat{\vartheta}(L)  = \lim_{H\to -w_{0}(1)} \Theta(H,1) = \dfrac{2\pi}{\sqrt{\Omega_{0}^{(2)}(1)}} = \dfrac{2\pi}{\sqrt{2-\alpha}},
\end{equation*}
where the last equality comes from the fact that, with the notation of Appendix~\ref{appendix-A} $\Omega(u;1)=\frac{1}{2}u^{2} - \frac{\kappa}{\alpha} u^{\alpha} + w_{0}(1)$. The proof is complete.
\end{proof}

Notice that condition \eqref{cond-kn} is independent on $\tau$: hence, if a pair $(n,k)$ satisfies it, then there exists a $\tau$-periodic solution of type $(n,k)$ for any $\tau > 0$. Actually, by the homogeneity of the problem, these solutions are of the form
\begin{equation*}
t \mapsto \tau^{\frac{2}{2+\alpha }}x\left( \dfrac{t}{\tau}\right),
\end{equation*}
where $x$ is the solution of type $(n,k)$ and minimal period equal to $1$. From this, it immediately follows the next proposition.

\begin{proposition}\label{prop-loc}
For every $\tau > 0$, for every coprime positive integers $n,k$ satisfying \eqref{cond-kn}, and for every integer $\ell \geq 1$, let $x_\ell$ be the unique periodic solution of \eqref{eq-akep} of type $(n,k)$ and minimal period $\tau/\ell$. Then
\begin{equation*}
\lim_{\ell \to +\infty} \| x_\ell \|_{\infty} = 0,
\end{equation*}
where $\| x_\ell \|_{\infty} = \max_{t \in \mathbb{R}} | x_\ell(t) |$.
\end{proposition}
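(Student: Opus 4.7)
The plan is to exploit the homogeneity-based scaling formula displayed in the paragraph immediately preceding the proposition: if $x$ denotes the unique (up to time-translation and rotation) periodic solution of \eqref{eq-akep} of type $(n,k)$ with minimal period equal to $1$, then for every $T > 0$ the map
\[
t \mapsto T^{\frac{2}{2+\alpha}} x\!\left(\frac{t}{T}\right)
\]
is a solution of \eqref{eq-akep} of type $(n,k)$ with minimal period $T$. By the uniqueness statement in Theorem~\ref{th-rosette}, this must coincide, up to time-translation and rotation, with the solution asserted by the proposition, applied with $T = \tau/\ell$.

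Hence, first I would invoke Theorem~\ref{th-rosette} to ensure existence and uniqueness of $x_\ell$ for every $\ell \geq 1$, noting that the condition \eqref{cond-kn} does not depend on the chosen period. Then I would write
\[
x_\ell(t) = \left(\frac{\tau}{\ell}\right)^{\!\frac{2}{2+\alpha}} x\!\left(\frac{\ell\,t}{\tau}\right),
\]
possibly after composition with a time-translation and a rotation, neither of which affects the sup norm. Taking the supremum in $t$ gives
\[
\|x_\ell\|_\infty = \left(\frac{\tau}{\ell}\right)^{\!\frac{2}{2+\alpha}} \|x\|_\infty.
\]
Since $\alpha \in (0,2)$, the exponent $\frac{2}{2+\alpha}$ is strictly positive, so letting $\ell \to +\infty$ yields $\|x_\ell\|_\infty \to 0$.

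In short, there is no genuine obstacle here: once one accepts the homogeneity scaling formula (which follows by a direct computation from $\ddot x = -\kappa x/|x|^{\alpha+2}$, exactly as in the proofs of Lemmas~\ref{hom-THL} and~\ref{hom-thetaHL}) and the uniqueness part of Theorem~\ref{th-rosette}, the conclusion is a one-line estimate. The only point requiring mild care is to justify that the solution produced by the scaling is indeed (equivalent to) $x_\ell$, which is why I would explicitly appeal to the uniqueness clause of Theorem~\ref{th-rosette} to identify the two.
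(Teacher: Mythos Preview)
Your proposal is correct and follows exactly the route the paper intends: the paper states the scaling formula $t \mapsto \tau^{\frac{2}{2+\alpha}} x(t/\tau)$ just before the proposition and then says the result ``immediately follows,'' leaving the one-line sup-norm computation and the appeal to uniqueness implicit; you have simply made these explicit.
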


\subsection{Periodic solutions galore to a restricted $3$-body problem}\label{section-5.3}

Let us consider the system
\begin{equation}\label{eq-res}
\ddot q = GM \dfrac{\xi_{M}(t) - q}{| \xi_{M}(t) - q |^{\alpha+2}} + Gm \dfrac{\xi_{m}(t) - q}{| \xi_{m}(t) - q |^{\alpha+2}}, \quad q \in \mathbb{R}^{2},
\end{equation}
where $\alpha \in (0,2)$, $G,M, m > 0$, and $\xi_{M}, \xi_{m}$ are given functions. The above equation can be meant as a model for the planar motion of a massless particle $q$ under the attraction of two heavy bodies (the so-called primaries) with positions $\xi_{M}$ and $\xi_{m}$ and masses $M$ and $m$, respectively. The classical case of a Newtonian attraction ($\alpha = 1$) gives rise to the usual restricted $3$-body problem; here, we deal with the non-Newtonian case $\alpha \neq 1$. For some results about the $-\alpha$-homogeneous $N$-body problem, see \cite{BaTeVe-14,McG-81} and the references therein.

From now on on, we assume that the primaries move along circular orbits of the $-\alpha$-homogeneous two-body problem
\begin{equation*}
\ddot \xi_M = Gm \dfrac{\xi_m - \xi_M}{\vert \xi_m - \xi_M \vert^{\alpha+2}}, \qquad \ddot \xi_m = GM \dfrac{\xi_M - \xi_m}{\vert \xi_M - \xi_m \vert^{\alpha+2}}.
\end{equation*}
Thus, $\xi_{M}$ and $\xi_{m}$ are periodic functions with a minimal period $\tau > 0$, depending on the radii of the circles on which the primaries move. By normalization, it is possible to set 
\begin{equation*}
\tau = 2\pi, \qquad  m + M = 1, \qquad \vert \xi_M - \xi_m \vert = 1,
\end{equation*}
implying $G = 1$. Finally, we suppose that the center of mass of the primaries is placed at the origin, namely
\begin{equation*}
m \xi_{m}(t) + M \xi_{M}(t) = 0, \quad \text{for every $t \in \mathbb{R}$}.   
\end{equation*}
According to these assumptions, 
\begin{equation*}
\xi_{M}(t) =m e^{ i t}, 
\qquad \xi_{m}(t) = - (1-m) e^{ i t},
\end{equation*}
and therefore from now we deal with
\begin{equation}\label{eq-resnorm}
\ddot q = (1-m) \dfrac{m e^{ i t} - q}{| m e^{ i t} - q |^{\alpha+2}} - m \dfrac{ (1-m) e^{ i t} + q}{|(1-m) e^{ i t} + q |^{\alpha+2}}, \quad q \in \mathbb{R}^{2}.
\end{equation}
In this setting, our result is the following.

\begin{theorem}\label{th-restricted}
Let $\alpha\in(0,2)$ and $\alpha \neq 1$. 
Then, for every $N \geq 1$ there exists $m_{*} > 0$ such that, for every $m \in (0,m_{*})$, there exist $N$ periodic solutions of~\eqref{eq-resnorm} having period $2\pi$.
\end{theorem}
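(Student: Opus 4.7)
The plan is to interpret \eqref{eq-resnorm} as a $2\pi$-periodic perturbation of the $-\alpha$-homogeneous Kepler problem \eqref{eq-akep} (with $\kappa = 1$) in the parameter $m$, and then to apply Theorem~\ref{th-main} at $N$ distinct invariant tori of the unperturbed problem filled by $2\pi$-periodic solutions. Setting $m = 0$ in \eqref{eq-resnorm} kills the second summand (which carries a prefactor $m$) and places $\xi_M$ at the origin, so the unperturbed system is exactly $\ddot q = -q/|q|^{\alpha+2}$. For $m > 0$, recognizing the right-hand side as the gradient of a $t$-periodic potential yields
\begin{equation*}
\ddot q = -\frac{q}{|q|^{\alpha+2}} + m\, \nabla_q U(t,q,m),
\end{equation*}
with $m\, U(t,q,m) = (1-m)/(\alpha|me^{it}-q|^\alpha) + m/(\alpha|(1-m)e^{it}+q|^\alpha) - 1/(\alpha|q|^\alpha)$. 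A Taylor expansion of $|me^{it}-q|^{-\alpha}$ around $m = 0$ shows that $U$ and $\nabla_q U$ stay uniformly bounded as $m \to 0^+$ on any compact subset of $\mathbb{R} \times (\mathbb{R}^2 \setminus \{0\})$ separated from the unit circle $\{q = -e^{it}\}$; this fits the hypotheses of Theorem~\ref{th-main} in the $\varepsilon$-dependent, local form recorded in Remark~\ref{rem-3.1}.

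Next, I select the tori. Fix a coprime pair $(n,k)$ satisfying \eqref{cond-kn}, which is possible because the interval there is non-empty. By Theorem~\ref{th-rosette}, for every integer $\ell \geq 1$ there exists (uniquely modulo time-translations and rotations) a solution $x_\ell$ of the unperturbed problem of type $(n,k)$ with minimal period $2\pi/\ell$, hence $2\pi$-periodic; denote by $(H_\ell^*,L_\ell^*)$ its action data. By the homogeneity scaling used in Proposition~\ref{prop-loc}, $r_\pm(H_\ell^*,L_\ell^*)$ behaves like $\ell^{-2/(2+\alpha)}$, so for $\ell$ sufficiently large the radial annulus $\{r_- \leq |q| \leq r_+\}$ associated with $\mathcal{T}_{(H_\ell^*,L_\ell^*)}^0$ is contained in $\{|q| < 1/2\}$ and is uniformly separated from $-e^{it}$. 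I then choose $N$ indices $\ell_1 < \cdots < \ell_N$ large enough for this and, in addition, spaced so that the corresponding annuli are pairwise disjoint (again possible by the $\ell^{-2/(2+\alpha)}$ scaling). By Section~\ref{section-4.2}, $\mathcal{D}(H_{\ell_j}^*,L_{\ell_j}^*) \neq 0$ for every $j$, since $\alpha \in (0,2) \setminus \{1\}$.

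For each $j$, the torus $\mathcal{T}_{(H_{\ell_j}^*,L_{\ell_j}^*)}^0$ is filled by solutions of minimal period $2\pi/\ell_j$, which divides $\tau = 2\pi$, so Remark~\ref{rem-3.1} permits the bifurcation; combined with condition \eqref{cond-d}, Theorem~\ref{th-main} yields $m^*_j > 0$ such that for $m \in (0,m^*_j)$ at least three $2\pi$-periodic solutions of \eqref{eq-resnorm} bifurcate from $\mathcal{T}_{(H_{\ell_j}^*,L_{\ell_j}^*)}^0$. Setting $m_* = \min_{j=1,\ldots,N} m^*_j > 0$, for every $m \in (0,m_*)$ we thus produce at least $3N$ periodic solutions. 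Distinctness across tori is automatic because each bifurcating solution lies arbitrarily close (as $m \to 0^+$) to a solution on its base torus, and the base tori are pairwise disjoint subsets of phase space by the choice of the $\ell_j$'s.

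I expect the main obstacle to be the uniformity of the hypotheses of Theorem~\ref{th-main} as $m \to 0^+$. The singular set of $\nabla_q U(t,q,m)$ at $q = me^{it}$ drifts toward the origin, threatening to intersect the inner boundary $r_-(H_{\ell_j}^*,L_{\ell_j}^*)$ of the $j$-th torus. Since each such $r_-$ is a fixed positive number depending only on $\ell_j$, requiring $m$ less than, say, $\tfrac{1}{2}r_-(H_{\ell_j}^*,L_{\ell_j}^*)$ keeps the perturbation of class $\mathcal{C}^{1}$ on a neighborhood of the torus and makes the bounds on $U$ and $\nabla_q U$ uniform, so each $m^*_j$ is strictly positive and hence so is $m_*$.
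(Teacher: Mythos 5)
Your proof is correct and follows the paper's overall strategy: select, via Theorem~\ref{th-rosette} and Proposition~\ref{prop-loc}, $N$ invariant tori of the $-\alpha$-homogeneous Kepler problem filled by $2\pi$-periodic solutions of small amplitude, check the non-degeneracy condition \eqref{cond-d} through the analysis of Section~\ref{section-4.2}, and bifurcate with Theorem~\ref{th-main} in the generalized form of Remark~\ref{rem-3.1}. The one place where you genuinely deviate is the reduction of \eqref{eq-resnorm} to perturbation form. The paper performs the change of variable $x=(1-m)^{-1/(\alpha+2)}(q-me^{it})$, which puts the heavier primary exactly at the origin and reproduces the unperturbed equation identically, so that the perturbation has a single singular set, near the unit circle (collision with the lighter primary). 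You instead stay in the inertial frame and absorb into $m\,\nabla_q U$ both the lighter primary's attraction and the $O(m)$ discrepancy caused by the heavy primary sitting at $me^{it}$ rather than at the origin; this obliges you to control a second singular set $\{q=me^{it}\}$ drifting toward the origin, which you correctly neutralize by requiring $m$ smaller than a fixed fraction of $r_-(H^*_{\ell_j},L^*_{\ell_j})$, and to verify (via the difference quotient in $m$) that $U$ and $\nabla_q U$ remain bounded as $m\to 0^+$ on a neighborhood of each annulus, as Remark~\ref{rem-3.1} demands. Both reductions are legitimate: the paper's buys a cleaner perturbation with one singular circle, yours avoids the change of variables at the cost of these two extra checks, which you do carry out. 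A minor remark: your requirement that the annuli of the selected tori be pairwise disjoint is sufficient but not necessary for distinctness of the bifurcating solutions, since distinct tori are already disjoint subsets of phase space (they carry different energies and angular momenta), which is all the paper uses when it takes the consecutive indices $\ell^{*}+1,\dots,\ell^{*}+N$.
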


As it will be clear from the proof, the above solutions are obtained, after some suitable change of variables, as perturbations of periodic solutions of a $-\alpha$-homogeneous Kepler problem of type $(n,k)$ and minimal period 
\begin{equation*}
\dfrac{2\pi}{\ell_{*}(n,k) + 1}, \dfrac{2\pi}{\ell_{*}(n,k) + 2}, \cdots, \dfrac{2\pi}{\ell_{*}(n,k) + N},
\end{equation*}
where $\ell_{*}(n,k)$ is a sufficiently large integer. The pair $(n,k)$ can be arbitrarily chosen, as long as $n,k$ are coprime positive integers satisfying condition \eqref{cond-kn}. Hence, by varying the pair $(n,k)$ other periodic solutions can be provided: all of them remain distinct for $m>0$ sufficiently small, since they bifurcate from distinct tori of the unperturbed problem. System \eqref{eq-res} thus possesses periodic solutions galore.

\begin{proof}
Let us perform the change of variable
\begin{equation*}
x(t) = (1-m)^{-\frac{1}{\alpha + 2}} (q(t) - m e^{ i t}),
\end{equation*}
so as to place the primary with larger mass ($\xi_M(t) = m e^{ i t}$) at the origin. Therefore, system \eqref{eq-resnorm} becomes
\begin{equation*}
\ddot x = - \dfrac{x}{\vert x \vert^{\alpha + 2}} -   
m \left( (1-m)^{-1}\dfrac{(1-m)^{-\frac{1}{\alpha + 2}} e^{i t} + x}{\vert (1-m)^{-\frac{1}{\alpha + 2}} e^{i t} + x \vert^{\alpha + 2}} - (1-m)^{-\frac{1}{\alpha + 2}} e^{ i t} \right),
\end{equation*}
which is of the form
\begin{equation}\label{eq-res3}
\ddot x = - \dfrac{x}{\vert x \vert^{\alpha + 2}} + m \nabla_x U(t,x,m),
\end{equation}
with
\begin{equation*}
U(t,x,m) =  \dfrac{(1-m)^{-1}}{\alpha \vert  (1-m)^{-\frac{1}{\alpha + 2}} e^{ i t} +x \vert^{\alpha} } + (1-m)^{-\frac{1}{\alpha + 2}} \langle e^{i t}, x \rangle .
\end{equation*}
Note that the perturbation term $U$ is singular on the circle of radius $(1-m)^{-\frac{1}{\alpha + 2}}$, due to possible collisions with the primary with smaller mass. This radius tends to $1$ as $m \to 0^{+}$.

We now regard $m$ as a parameter, so as to interpret \eqref{eq-res3} as a perturbation of the $-\alpha$-homogeneous Kepler problem \eqref{eq-akep} (with $\kappa = 1$). Fixing arbitrarily $n,k$ coprime positive integers satisfying condition \eqref{cond-kn}, by Theorem~\ref{th-rosette} the unperturbed problem ($m = 0$) admits a periodic solution $x_\ell$ of type $(n,k)$ and minimal period $2\pi/\ell$, for every integer $\ell \geq 1$. Moreover, by Proposition~\ref{prop-loc} there exists $\ell^{*}(n,k)$ such that $\Vert x_\ell \Vert_\infty < 1/2$ if $\ell > \ell^{*}(n,k)$. On this open ball, the perturbation term $U$ is well-defined for $m$ small.  Thus, fixed an integer $N \geq 1$, we can apply Theorem~\ref{th-main}, in the generalized version discussed in Remark~\ref{rem-3.1}, to bifurcate from the tori associated with $x_{\ell^{*}+i}$, for every $i=1,\ldots,N$, as long as $m$ is small enough. The proof is thus completed.
\end{proof}

\appendix 
\section{Some auxiliary results on time-maps and their derivatives}\label{appendix-A}

In this appendix, we provide some technical results for the time-maps $T$ and $P$ introduced in Section~\ref{section-2}. More precisely, we are going to provide explicit formulas for both the derivatives with respect to $H$ and $L$ (see Lemma~\ref{lem-chicone}) and, from this, we compute their limit values as $H$ goes to the minimal value allowed (see Proposition~\ref{prop-stime-tempo}).
The results on the dependence on the energy parameter $H$ are essentially known from \cite{Ch-87, Sc-85, Sc-90}, while the ones for the parameter $L$ seem to be new. 

As observed in Section~\ref{section-2}, the function $T$ and $P$ defined in \eqref{def-T-HL} and \eqref{def-P-HL} are the period maps of the nonlinear oscillators
\begin{equation*}
\dfrac{1}{2} \dot{r}^{2} +\dfrac{1}{2} \dfrac{L^{2}}{r^{2}}-V(r)=H
\end{equation*}
and
\begin{equation*}
\dfrac{1}{2} \dot{u}^{2} +\dfrac{1}{2} L^{2}\, u^{2} - V\biggl{(}\dfrac{1}{u} \biggr{)} =H,
\end{equation*}
respectively. We aim to introduce a unifying approach to these two oscillators.

Let $I,J\subseteq(0,+\infty)$ be open intervals and $\mathcal{V}\in\mathcal{C}^{2}(I,\mathbb{R})$. For $k\in\{-1,1\}$, we define $\mathcal{W} \colon I\times J \to \mathbb{R}$ as
\begin{equation*}
\mathcal{W}(s;L) = \dfrac{1}{2} L^{2} s^{2k} - \mathcal{V}(s),\quad (s,L)\in I\times J.
\end{equation*}
In the following, the derivatives with respect to $s$ are simply denoted as $\mathcal{W}^{(i)}$. Observe that $\mathcal{W}^{(i)}$, $i=0,1,2,$ is differentiable with respect to $L$ and
\begin{equation}\label{eq-derWL}
\begin{aligned}
\partial_{L} \mathcal{W}(s;L) &= L s^{2k},
\\
\partial_{L} \mathcal{W}'(s;L) &= 2kL s^{2k-1},
\\
\partial_{L} \mathcal{W}''(s;L) &= 2k(2k-1)L s^{2k-2},	
\end{aligned}
\end{equation}
for every $(s,L)\in I\times J$.

We assume that
\begin{itemize}
\item [$(h_{\mathcal{W}})$] for all $L\in J$ the function $\mathcal{W}(\cdot;L)$ has a (strict) local minimum at $s=s_{0}(L)$ with $\mathcal{W}'(\cdot;L)<0$ on $(s_{*}(L),s_{0}(L))$ and $\mathcal{W}'(\cdot;L)>0$ on $(s_{0}(L),s^{*}(L))$, for some $s_{*}(L) < s_{0}(L) < s^{*}(L)$.
\end{itemize}
We set 
\begin{equation*}
I_{L}=(s_{*}(L),s^{*}(L)), \qquad
\Gamma=\{(s,L)\in\mathbb{R}^{2} \colon L\in J, \; s\in I_{L}\}.
\end{equation*}
From now on, we implicitly assume that $(s,L)\in \Gamma$.

Let us define
\begin{equation}\label{def-omega0}
\omega_{0}(L) = - \mathcal{W}(s_{0}(L);L).
\end{equation}
The regularity of $\mathcal{W}$ implies that $\omega_{0}$ is differentiable and, from \eqref{eq-derWL} and \eqref{def-omega0}, we have that
\begin{equation}\label{eq-derw0}
\begin{aligned}
\omega_{0}'(L) &= - \mathcal{W}'(s_{0}(L);L) s_{0}'(L) - \partial_{L} \mathcal{W}(s_{0}(L);L) 
\\
&= - \partial_{L} \mathcal{W}(s_{0}(L);L)=-L s_{0}(L)^{2k},
\end{aligned}
\end{equation}
for every $L\in J$.

Let us observe that in general the minimum level $-\omega_{0}(L)$ of the potential $\mathcal{W}$ is not zero; in the next computations it will be useful to have a zero minimum, so we define
\begin{equation*}
\Omega(s;L) = \mathcal{W}(s;L)+\omega_{0}(L), 
\quad (s,L)\in \Gamma.
\end{equation*}
From this definition and from hypothesis $(h_{\mathcal{W}})$ we plainly deduce that
\begin{equation} \label{eq-nuovaomega}
\Omega(s_0(L);L)=\Omega'(s_0(L);L)=0, 
\end{equation}
for every $L\in J$, and that
\begin{equation} \label{eq-segno2}
(s-s_{0}(L))\Omega'(s;L)>0, 
\end{equation}
for every $L\in J$ and $s\in I_{L} \setminus\{ s_{0}(L)\}$.
As a consequence, for all $L\in J$ there exists $H_{0}(L)>-\omega_{0}(L)$ such that for all $H\in(-\omega_{0}(L), H_{0}(L))$ the equation $\Omega(s;L)=H+\omega_{0}(L)$ has two solutions $s_{\pm}(H,L)$ such that
\begin{equation*}
s_{-}(H,L)\in (s_{*}(L),s_{0}(L)), \qquad
s_{+}(H,L)\in (s_{0}(L), s^{*}(L)).
\end{equation*}
For simplicity, we assume that $H_{0}$ depends continuously on $L$, so that the set
\begin{equation*}
\Lambda=\bigl{\{}(H,L)\in I\times J \colon \omega_{0}(L)<H<H_{0}(L) \bigr{\}},
\end{equation*}
is open.
We now define
\begin{equation*}
T_{\Omega}(H,L) = \sqrt{2} \int_{s_{-}(H,L)}^{s_{+}(H,L)} \dfrac{\mathrm{d}s}{\sqrt{H+\omega_{0}(L)-\Omega(s;L)}},\quad (H,L)\in \Lambda.
\end{equation*}
The function $T_\Omega$ can obviously be written as 
\begin{equation*}
T_{\Omega}(H,L) = \sqrt{2} \int_{s_{-}(H,L)}^{s_{+}(H,L)} \dfrac{\mathrm{d}s}{\sqrt{H-\mathcal{W}(s;L)}}.
\end{equation*}
In the above formula we recognize the expressions of the time-maps $T$ and $P$ in \eqref{def-T-HL} and \eqref{def-P-HL}, corresponding to $k=-1$ and $\mathcal{V}(s)=V(s)$, and $k=1$ and $\mathcal{V}(s)=V(1/s)$, respectively, with $V$ defined as in Section~\ref{section-2}.

\begin{remark} \label{rem-regTTheta}
Via a standard implicit function argument, it is possible to prove that $T_\Omega\in \mathcal{C}^1(\Lambda)$. Indeed, let us denote by $s(\cdot; s_{0},L)$ the solution of the Cauchy problem
\begin{equation*}
\begin{cases}
\,\ddot{s}+\mathcal{W}'(s;L)=0,
\\
\, s(0)=s_{0},\quad \dot{s}(0)=0.
\end{cases}
\end{equation*}
Then, $T_\Omega$ is implicitly defined by
\begin{equation*}
\dot{s}(T_\Omega (H,L);s_{+}(H,L),L)=0,
\end{equation*}
for every $(H,L)\in \Lambda$.
Since the vector solution $(s,\dot{s})$ and $s_{+}$ depend in a $\mathcal{C}^{1}$-way from all their variables, the conclusion follows by hypothesis $(h_{\mathcal{W}})$.
\hfill$\lhd$
\end{remark}

In the following, we are going to prove asymptotic estimates on $T_\Omega$, $\partial_{H} T_\Omega$ and $\partial_{L} T_\Omega$ when the energy $H$ goes to the minimum value allowed $-\omega_{0}(L)$. 
To this aim, we assume that
\begin{equation}\label{cond-extra}
\mathcal{W}''(s_{0}(L);L)>0, \quad \text{for every $L\in J$,}
\end{equation}
and that $\mathcal{V}\in\mathcal{C}^{4}(I)$.
From \eqref{cond-extra} and the regularity of $\mathcal{W}$ it follows that $s_{0}$ is differentiable in $J$ and, from $\mathcal{W}'(s_{0}(L);L)=0$ and \eqref{eq-derWL} we have that
\begin{equation}\label{eq-ders0}
s'_{0}(L)=-\dfrac{2kLs_{0}(L)^{2k-1}}{\mathcal{W}''(s_{0}(L);L)},
\end{equation}
for every $L\in J$.

From the regularity of $\mathcal{V}$ and $\omega_{0}$, it is immediate to see that $\Omega (\cdot;L)\in \mathcal{C}^{4}(I_{L})$ and that $\partial_{L} \Omega$,  $\partial^{2}_{L} \Omega$, $\partial_{L} \Omega'$ and $\partial_{L} \Omega''$ exist; moreover, setting 
\begin{equation*}
\Omega_{0}^{(i)} (L) = \Omega^{(i)}(s_{0}(L);L), \quad i\in\{1,2,3,4\},
\end{equation*}
we have 
\begin{equation} \label{eq-deromegazero}
\Omega_{0}^{(i)} (L) = \mathcal{W}^{(i)}(s_{0}(L);L), \quad i\in\{1,2,3,4\},
\end{equation}
for every $L\in J$. In particular, condition \eqref{cond-extra} writes equivalently as
\begin{equation}\label{cond-extra-omega02}
\Omega_{0}^{(2)} (L)>0, \quad \text{for every $L\in J$.}
\end{equation}

It is now convenient to write $T_{\Omega}$ in a different way, by a suitable change of variable in the integral; hence, recalling and inspired by \cite{Ch-87}, we first define
\begin{equation} \label{eq-defh}
h(s;L) = \mathrm{sgn} (s-s_{0}(L)) \sqrt{\Omega(s;L)}, \quad (s,L)\in \Gamma.
\end{equation}
The regularity of $\Omega$ and $s_{0}$ directly implies that $h\in \mathcal{C}(\Gamma)$ and $h', h'', h''', \partial_{L} h, \partial_{L} h'\in \mathcal{C}(\Gamma\setminus \gamma)$, where $\gamma$ is the set
\begin{equation*}
\gamma=\bigl{\{} (s,L)\in \Gamma \colon s=s_{0}(L),\, L\in J \bigr{\}}.
\end{equation*}
Under the supplementary condition \eqref{cond-extra-dL} below, it is possible to show that the regularity properties of the above functions can be extended to whole domain $\Gamma$, as stated in the next preliminary lemma.

\begin{lemma}\label{lem-h}
Let us assume \eqref{cond-extra-omega02} and that 
\begin{equation}\label{cond-extra-dL}
\biggl{(} \dfrac{\mathrm{d}}{\mathrm{d}L} \Omega^{(2)}_{0} \biggr{)}(L) \neq 0, \quad \text{for every $L\in J$.} 
\end{equation}
Then, it holds that
\begin{itemize}
\item[$(i)$] $h', h'', h'''\in\mathcal{C}(\Gamma)$ and for every $L\in J$ we have 
\begin{equation}\label{eq-hsugamma1}
\begin{aligned}
h'(s_{0}(L);L) &= \dfrac{(\Omega^{(2)}_{0}(L))^{\frac{1}{2}}}{\sqrt{2}},
\\
h''(s_{0}(L);L) &= \dfrac{\Omega^{(3)}_{0}(L)}{3\sqrt{2} \, ( \Omega^{(2)}_{0}(L))^{\frac{1}{2}}},
\\
h'''(s_{0}(L);L) &= \displaystyle \dfrac{3\Omega^{(2)}_{0}(L)\, \Omega^{(4)}_{0}(L)-(\Omega^{(3)}_{0}(L))^{2}}{ 12\sqrt{2} \, (\Omega^{(2)}_{0}(L))^{\frac{3}{2}}}.
\end{aligned}
\end{equation}
\item[$(ii)$]  $\partial_{L} h,\partial_{L} h'\in\mathcal{C}(\Gamma)$ and for every $L\in J$ we have
\begin{equation}\label{eq-hsugamma2}
\begin{aligned}
\partial_{L} h (s_{0}(L);L) &= \dfrac{\sqrt{2} kLs_{0}(L)^{2k-1}}{(\Omega^{(2)}_{0}(L))^{\frac{1}{2}}},
\\
\partial_{L} h' (s_{0}(L);L) &= \dfrac{3 \Omega^{(2)}_{0}(L) \biggl{(}\dfrac{\mathrm{d}}{\mathrm{d}L} \Omega_{0}^{(2)} \biggr{)}(L) + 4 k L s_{0}(L)^{2k-1} \Omega^{(3)}_{0}(L)}{6\sqrt{2} \, (\Omega^{(2)}_{0}(L))^{\frac{3}{2}}}.
\end{aligned}
\end{equation}
\end{itemize}
\end{lemma}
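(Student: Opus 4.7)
My plan is to factor the zero of $h$ at $s=s_{0}(L)$ out explicitly and reduce the analysis to a positive regular factor. Since $\Omega(s_{0}(L);L)=\Omega'(s_{0}(L);L)=0$ by \eqref{eq-nuovaomega}, Taylor's theorem with integral remainder will supply the factorization
\begin{equation*}
\Omega(s;L)=(s-s_{0}(L))^{2}\,\tilde\phi(s;L),\qquad \tilde\phi(s;L):=\int_{0}^{1}(1-t)\,\Omega''\bigl(s_{0}(L)+t(s-s_{0}(L));L\bigr)\,\mathrm{d}t.
\end{equation*}
Under \eqref{cond-extra-omega02} one has $\tilde\phi(s_{0}(L);L)=\tfrac{1}{2}\Omega_{0}^{(2)}(L)>0$; since $\mathcal{W}$ is $\mathcal{C}^{4}$ in $s$ and polynomial in $L$, and $s_{0}(\cdot)$ is smooth by \eqref{eq-ders0}, the integral formula guarantees the continuous partial derivatives of $\tilde\phi$ up to the required order throughout $\Gamma$. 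Positivity transfers this regularity to $g(s;L):=\sqrt{\tilde\phi(s;L)}$, and I can rewrite
\begin{equation*}
h(s;L)=(s-s_{0}(L))\,g(s;L),
\end{equation*}
which absorbs the $\operatorname{sgn}$-factor of \eqref{eq-defh} into the linear pre-factor and immediately yields the extendibility claims.

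\textbf{Computing the $s$-derivatives.} Leibniz applied to the factorization gives $h^{(j)}(s;L)=j\,g^{(j-1)}(s;L)+(s-s_{0}(L))\,g^{(j)}(s;L)$ for $j=1,2,3$, so at $s=s_{0}(L)$ only $g^{(j-1)}(s_{0}(L);L)$ survives. From the Taylor expansion
\begin{equation*}
\tilde\phi(s;L)=\tfrac{1}{2}\Omega_{0}^{(2)}(L)+\tfrac{1}{6}\Omega_{0}^{(3)}(L)(s-s_{0}(L))+\tfrac{1}{24}\Omega_{0}^{(4)}(L)(s-s_{0}(L))^{2}+O\bigl((s-s_{0}(L))^{3}\bigr),
\end{equation*}
I differentiate $g^{2}=\tilde\phi$ once and twice, obtaining $2gg'=\tilde\phi'$ and $2(g')^{2}+2gg''=\tilde\phi''$; evaluating at $s=s_{0}(L)$ and combining with the values $\tilde\phi_{0}=\tfrac12\Omega_{0}^{(2)}$, $\tilde\phi'_{0}=\tfrac16\Omega_{0}^{(3)}$, $\tilde\phi''_{0}=\tfrac{1}{12}\Omega_{0}^{(4)}$ then yields the three identities in \eqref{eq-hsugamma1} by direct substitution.

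\textbf{Computing the $L$-derivatives.} Differentiating $h=(s-s_{0}(L))g$ gives
\begin{equation*}
\partial_{L} h(s;L)=-s_{0}'(L)\,g(s;L)+(s-s_{0}(L))\,\partial_{L} g(s;L),
\end{equation*}
which at $s=s_{0}(L)$ collapses to $-s_{0}'(L)\,g(s_{0}(L);L)$; inserting $s_{0}'(L)=-2kLs_{0}(L)^{2k-1}/\Omega_{0}^{(2)}(L)$ from \eqref{eq-ders0} and $g(s_{0}(L);L)=\sqrt{\Omega_{0}^{(2)}(L)/2}$ produces the first identity in \eqref{eq-hsugamma2}. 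Similarly, from $h'=g+(s-s_{0}(L))g'$ I get $\partial_{L} h'(s_{0}(L);L)=\partial_{L} g(s_{0}(L);L)-s_{0}'(L)\,g'(s_{0}(L);L)$; to obtain $\partial_{L} g(s_{0}(L);L)$ I apply the chain-rule trick to $G(L):=g(s_{0}(L);L)=\sqrt{\Omega_{0}^{(2)}(L)/2}$, which reads $G'(L)=s_{0}'(L)\,g'(s_{0}(L);L)+\partial_{L} g(s_{0}(L);L)$, so that $\partial_{L} h'(s_{0}(L);L)=G'(L)-2\,s_{0}'(L)\,g'(s_{0}(L);L)$. Substituting $G'(L)=(\Omega_{0}^{(2)})'(L)/\bigl(2\sqrt{2}\,(\Omega_{0}^{(2)}(L))^{1/2}\bigr)$, the already computed $g'(s_{0}(L);L)$, and $s_{0}'(L)$, and collecting terms over the common denominator $6\sqrt{2}\,(\Omega_{0}^{(2)})^{3/2}$ gives the second identity in \eqref{eq-hsugamma2}.

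\textbf{Main obstacle.} The only genuinely delicate point is proving that the derivatives computed away from $\gamma$ extend continuously across it, since $h$ is defined piecewise through the sign function. The factorization $h=(s-s_{0}(L))g$ settles this cleanly for part $(i)$, but for part $(ii)$ one also needs $\partial_{L} g$ to be continuous on the whole of $\Gamma$, which rests on the joint $\mathcal{C}^{1}$ character of $\tilde\phi$ in $(s,L)$; this is where assumption \eqref{cond-extra-dL} enters naturally as the regularity hypothesis coupling the $L$-variation of the Taylor coefficient $\Omega_{0}^{(2)}$ to the structure of $\tilde\phi$. Once the continuity of the extensions is established, the closed-form identities reduce to routine algebraic manipulation.
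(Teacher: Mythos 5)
Your route is genuinely different from the paper's. The paper works directly with $\tilde h(\xi;L)=\mathrm{sgn}(\xi)\sqrt{\widetilde\Omega(\xi;L)}$, writes the quotient formulas for $\tilde h',\tilde h'',\tilde h''',\partial_L\tilde h,\partial_L\tilde h'$ off the singular set $\gamma$, and computes their limits at $(0,\bar L)$ from two-variable Taylor expansions of $\widetilde\Omega$ and its derivatives. You instead factor the double zero first, $\Omega(s;L)=(s-s_{0}(L))^{2}\tilde\phi(s;L)$ with $\tilde\phi$ the integral remainder, and reduce everything to the regular positive factor $g=\sqrt{\tilde\phi}$. This is a legitimate and in several respects cleaner device: it disposes of the sign function once and for all, and all five boundary values you obtain agree with \eqref{eq-hsugamma1} and \eqref{eq-hsugamma2} — the algebra based on $\tilde\phi_{0}=\tfrac12\Omega_{0}^{(2)}$, $\tilde\phi_{0}'=\tfrac16\Omega_{0}^{(3)}$, $\tilde\phi_{0}''=\tfrac1{12}\Omega_{0}^{(4)}$ is correct, as is the chain-rule trick recovering $\partial_{L}g(s_{0}(L);L)$ from $G(L)=\sqrt{\Omega_{0}^{(2)}(L)/2}$. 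Part $(ii)$ in particular comes out quite smoothly this way.

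There is, however, one genuine gap: the claim that the factorization ``immediately yields the extendibility claims'' fails for $h'''$. Since $\mathcal V$ is only $\mathcal C^{4}$, the integral formula controls $\tilde\phi$ (hence $g$) only up to second order in $s$: indeed $\tilde\phi^{(j)}(s;L)=\int_{0}^{1}(1-t)\,t^{j}\,\Omega^{(j+2)}\bigl(s_{0}(L)+t(s-s_{0}(L));L\bigr)\,\mathrm{d}t$ requires $\Omega^{(j+2)}$, so only $j\le 2$ is available on all of $\Gamma$. Consequently $h'''=3g''+(s-s_{0}(L))\,g'''$ contains the term $(s-s_{0}(L))\,g'''$, which exists only off $\gamma$ and whose vanishing at $\gamma$ is \emph{not} a consequence of $\tilde\phi$ being $\mathcal C^{2}$; mere $\mathcal C^{2}$ regularity does not bound a third derivative near the boundary. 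You must supply an extra argument: for instance, with $\xi=s-s_{0}(L)$ one has off $\gamma$ the identity $\tilde\phi'''=\xi^{-5}\bigl(\xi^{3}\Omega'''-6\xi^{2}\Omega''+18\xi\,\Omega'-24\Omega\bigr)$, and the fourth-order Taylor expansion of $\Omega$ at $s_{0}(L)$ shows the bracket is $o(\xi^{4})$, whence $\xi\,\tilde\phi'''\to0$ and then $\xi\,g'''\to0$ — which is essentially the same Taylor computation the paper performs on the quotient formula for $\tilde h'''$, so the detour through $\tilde\phi$ buys nothing at this one step. Finally, your closing remark misreads the role of \eqref{cond-extra-dL}: it is a non-degeneracy condition on the value of $\frac{\mathrm{d}}{\mathrm{d}L}\Omega_{0}^{(2)}$, not a regularity hypothesis, and none of the continuity statements or formulas of the lemma actually use it; the $L$-regularity you need comes from $s_{0}\in\mathcal C^{1}(J)$ and the explicit polynomial dependence of $\mathcal W$ on $L$.
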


\begin{proof} 
Let us first define the set 
\begin{equation*}
\widetilde{\Gamma}=\bigl{\{} (\xi,L)\in \mathbb{R}\times J \colon (\xi+s_0(L),L)\in \Gamma \bigr{\}}
\end{equation*}
and observe that $(0,L)\in \widetilde{\Gamma}$, for every $L\in J$. Let us also introduce the functions $\widetilde{\Omega}, \tilde{h}\colon \widetilde{\Gamma}\to \mathbb{R}$ defined by
\begin{equation*}
\widetilde{\Omega}(\xi;L) = \Omega(\xi+s_{0}(L);L),\quad (\xi,L)\in \widetilde{\Gamma},
\end{equation*}
and
\begin{equation*}
\tilde{h}(\xi;L) = \mathrm{sgn}(\xi) \sqrt{\widetilde{\Omega}(\xi;L)},\quad (\xi,L)\in \widetilde{\Gamma},
\end{equation*}
respectively.

We will prove that $\tilde{h}',\tilde{h}'',\tilde{h}''',\partial_{L} \tilde{h}, \partial_{L} \tilde{h}'$ are continuous in $\widetilde{\Gamma}$. Since 
\begin{equation} \label{eq-legameh}
\tilde{h}(\xi;L) = h(\xi+s_{0}(L);L), \quad \text{for every $(\xi,L)\in \widetilde{\Gamma}$,}
\end{equation}
and $s_{0}\in\mathcal{C}^{1}(J)$, this implies the required regularity properties of $h$ and its derivatives.

According to \eqref{eq-nuovaomega} and \eqref{eq-segno2}, we plainly deduce that $\widetilde{\Omega}$ satisfies 
\begin{equation}\label{eq-der-omega}
\widetilde{\Omega}(0,L) = \widetilde{\Omega}'(0,L) = 0, \quad \text{for every $L\in J$,}
\end{equation}
and $\widetilde{\Omega}(\xi;L)>0$, for every $(\xi,L)\in \widetilde{\Gamma}$ with $\xi \neq 0$. As a consequence,
for every $(\xi,L)\in \widetilde{\Gamma}$ with $\xi\neq 0$ we have
\begin{equation}\label{eq-derhtilde}
\begin{aligned}
\tilde{h}'(\xi;L) &= \mathrm{sgn} (\xi) \, \dfrac{\widetilde{\Omega}'(\xi;L)}{2(\widetilde{\Omega}(\xi;L))^{\frac{1}{2}}}
\\
\tilde{h}''(\xi;L) &= \mathrm{sgn} (\xi)\, \dfrac{2 \widetilde{\Omega}(\xi;L)\widetilde{\Omega}''(\xi;L)-(\widetilde{\Omega}'(\xi;L))^{2}}{4 (\widetilde{\Omega}(\xi;L))^{\frac{3}{2}}}
\\
\tilde{h}'''(\xi;L) &= \mathrm{sgn} (\xi)\, \dfrac{4(\widetilde{\Omega}(\xi;L))^2\widetilde{\Omega}'''(\xi;L)-6 \widetilde{\Omega}(\xi;L)\widetilde{\Omega}'(\xi;L)\widetilde{\Omega}''(\xi;L)+3(\widetilde{\Omega}'(\xi;L))^{3}}{8 (\widetilde{\Omega}(\xi;L))^{\frac{5}{2}}}
\\
\partial_{L} \tilde{h}(\xi;L) &= \mathrm{sgn}\, (\xi) \dfrac{\partial_{L} \widetilde{\Omega}(\xi;L)}{2 (\widetilde{\Omega}(\xi;L))^{\frac{1}{2}}}
\\
\partial_{L} \tilde{h}'(\xi;L) &= \mathrm{sgn} (\xi) \, \dfrac{2 \widetilde{\Omega}(\xi;L) \partial_{L} \widetilde{\Omega}'(\xi;L) - \widetilde{\Omega}'(\xi;L) \partial_{L} \widetilde{\Omega}(\xi;L)}{4 (\widetilde{\Omega}(\xi;L))^{\frac{3}{2}}},
\end{aligned}
\end{equation}
showing the continuity of these functions in $\widetilde{\Gamma}\setminus \{(0,L)\colon L\in J\}$.

Now, let us focus on the continuity at $(0,\bar{L})$, with $\bar{L}\in J$. From \eqref{eq-der-omega} we deduce that
\begin{equation} 
\partial_{L}\widetilde{\Omega}(0,L) 
= \partial^{2}_{LL}\widetilde{\Omega}(0,L)
= \partial^{3}_{LLL}\widetilde{\Omega}(0,L)
= \partial^{4}_{LLLL}\widetilde{\Omega}(0,L)
=0
\end{equation}
and that
\begin{equation}
\partial_{L}\widetilde{\Omega}'(0,L) 
= \partial^{2}_{LL}\widetilde{\Omega}'(0,L)
= \partial^{3}_{LLL}\widetilde{\Omega}'(0,L)
=0,
\end{equation}
for every $L\in J$.
Observing that
\begin{equation*}
\widetilde{\Omega}^{(i)}(0,L) = \Omega_{0}^{(i)}(L),
\end{equation*}
for every $L\in J$, it is immediate to prove the validity of the following Taylor expansions in $(0,\bar{L})$ with $\bar{L}\in J$:
\begin{align*}
\widetilde{\Omega}(\xi,L) &=
\dfrac{1}{2} \Omega_{0}^{(2)}(\bar{L}) \xi^{2} + 
\dfrac{1}{6} \biggl{(} \Omega_{0}^{(3)}(\bar{L}) \xi^{3} 
+ 3 \biggl{(}\dfrac{\mathrm{d}}{\mathrm{d}L} \Omega_{0}^{(2)} \biggr{)}(\bar{L}) \xi^2 (L-\bar{L})
\biggr{)}
\\
&+ \dfrac{1}{24} \biggl{(} \Omega_{0}^{(4)}(\bar{L}) \xi^{4} 
+ 4 \biggl{(}\dfrac{\mathrm{d}}{\mathrm{d}L} \Omega_{0}^{(3)} \biggr{)}(\bar{L}) \xi^3 (L-\bar{L})
+ 6 \biggl{(}\dfrac{\mathrm{d}^{2}}{\mathrm{d}L^{2}} \Omega_{0}^{(2)} \biggr{)}(\bar{L}) \xi^2 (L-\bar{L})^2
\biggr{)}
\\
&+ O(|(\xi,L-\bar{L})|^{5}), 
\quad (\xi,L)\to(0,\bar{L}),
\\
\widetilde{\Omega}'(\xi,L) &=
\Omega_{0}^{(2)}(\bar{L}) \xi + 
\dfrac{1}{2} \biggl{(} \Omega_{0}^{(3)}(\bar{L}) \xi^{2} 
+ 2 \biggl{(}\dfrac{\mathrm{d}}{\mathrm{d}L} \Omega_{0}^{(2)} \biggr{)}(\bar{L}) \xi (L-\bar{L})
\biggr{)}
\\
&+ \dfrac{1}{6} \biggl{(} \Omega_{0}^{(4)}(\bar{L}) \xi^{3} 
+ 3 \biggl{(}\dfrac{\mathrm{d}}{\mathrm{d}L} \Omega_{0}^{(3)} \biggr{)}(\bar{L}) \xi^2 (L-\bar{L})
+ 3 \biggl{(}\dfrac{\mathrm{d}^{2}}{\mathrm{d}L^{2}} \Omega_{0}^{(2)} \biggr{)}(\bar{L}) \xi (L-\bar{L})^2
\biggr{)}
\\
&+ O(|(\xi,L-\bar{L})|^{4}), 
\quad (\xi,L)\to(0,\bar{L}),
\\
\widetilde{\Omega}''(\xi,L) &=
\Omega_{0}^{(2)}(\bar{L}) + 
 \Omega_{0}^{(3)}(\bar{L}) \xi
+ \biggl{(}\dfrac{\mathrm{d}}{\mathrm{d}L} \Omega_{0}^{(2)} \biggr{)}(\bar{L}) (L-\bar{L})
\\
&+ \dfrac{1}{2} \biggl{(} \Omega_{0}^{(4)}(\bar{L}) \xi^{2} 
+ 2 \biggl{(}\dfrac{\mathrm{d}}{\mathrm{d}L} \Omega_{0}^{(3)} \biggr{)}(\bar{L}) \xi (L-\bar{L})
+ \biggl{(}\dfrac{\mathrm{d}^{2}}{\mathrm{d}L^{2}} \Omega_{0}^{(2)} \biggr{)}(\bar{L}) (L-\bar{L})^2
\biggr{)}
\\
&+ O(|(\xi,L-\bar{L})|^{3}), 
\quad (\xi,L)\to(0,\bar{L}),
\\
\widetilde{\Omega}'''(\xi,L) &=
\Omega_{0}^{(3)}(\bar{L})
+ \Omega_{0}^{(4)}(\bar{L}) \xi 
+ \biggl{(}\dfrac{\mathrm{d}}{\mathrm{d}L} \Omega_{0}^{(3)} \biggr{)}(\bar{L}) (L-\bar{L})
\\
&+ O(|(\xi,L-\bar{L})|^{2}), 
\quad (\xi,L)\to(0,\bar{L}),
\\
\partial_{L} \widetilde{\Omega}(\xi,L) &=
\dfrac{1}{2} \biggl{(}\dfrac{\mathrm{d}}{\mathrm{d}L} \Omega_{0}^{(2)} \biggr{)}(\bar{L}) \xi^2
+ \dfrac{1}{6} \biggl{(}  \biggl{(}\dfrac{\mathrm{d}}{\mathrm{d}L} \Omega_{0}^{(3)} \biggr{)}(\bar{L}) \xi^3 
+ 3 \biggl{(}\dfrac{\mathrm{d}^{2}}{\mathrm{d}L^{2}} \Omega_{0}^{(2)} \biggr{)}(\bar{L}) \xi^2 (L-\bar{L})
\biggr{)}
\\
&+ O(|(\xi,L-\bar{L})|^{4}), 
\quad (\xi,L)\to(0,\bar{L}),
\\
\partial_{L} \widetilde{\Omega}'(\xi,L) &= 
\biggl{(}\dfrac{\mathrm{d}}{\mathrm{d}L} \Omega_{0}^{(2)} \biggr{)}(\bar{L}) \xi
+ \dfrac{1}{2} \biggl{(}\dfrac{\mathrm{d}}{\mathrm{d}L} \Omega_{0}^{(3)} \biggr{)}(\bar{L}) \xi^2
+ \biggl{(}\dfrac{\mathrm{d}^{2}}{\mathrm{d}L^{2}} \Omega_{0}^{(2)} \biggr{)}(\bar{L}) \xi (L-\bar{L})
\\
&+ O(|(\xi,L-\bar{L})|^{3}), 
\quad (\xi,L)\to(0,\bar{L}).
\end{align*}
Recalling \eqref{eq-derhtilde}, quite long but simple computations prove that
\begin{equation}\label{eq-limitihtilde}
\begin{aligned}
&\lim_{(\xi,L)\to (0,\bar{L})} \tilde{h}'(\xi;L)
=\dfrac{(\Omega^{(2)}_{0}(\bar{L}))^{\frac{1}{2}}}{\sqrt{2}},
\\
&\lim_{(\xi,L)\to (0,\bar{L})} \tilde{h}''(\xi;L) 
=
\dfrac{\Omega^{(3)}_{0}(\bar{L})}{3\sqrt{2} \, ( \Omega^{(2)}_{0}(\bar{L}))^{\frac{1}{2}}},
\\
&\lim_{(\xi,L)\to (0,\bar{L})} \tilde{h}'''(\xi;L)
=\dfrac{3\Omega^{(2)}_{0}(\bar{L})\, \Omega^{(4)}_{0}(\bar{L})-(\Omega^{(3)}_{0}(\bar{L}))^{2}}{ 12\sqrt{2} \, (\Omega^{(2)}_{0}(\bar{L}))^{\frac{3}{2}}},
\\
&\lim_{(\xi,L)\to (0,\bar{L})} \partial_{L} \tilde{h}(\xi;L) = 0,
\\
&\lim_{(\xi,L)\to (0,\bar{L})} \partial_{L} \tilde{h}'(\xi;L)
=\dfrac{\biggl{(}\dfrac{\mathrm{d}}{\mathrm{d}L} \Omega_{0}^{(2)} \biggr{)}(\bar{L})}{2\sqrt{2} (\Omega^{(2)}_{0}(\bar{L}))^{\frac{1}{2}}},
\end{aligned}
\end{equation}
thus showing that the functions $\tilde{h}',\tilde{h}'', \tilde{h}''', \partial_{L} \tilde{h}, \partial_{L} \tilde{h}'$ can be extended in a continuous way to the whole set $\widetilde{\Gamma}$.

In order to conclude, we observe that the validity of the relations in \eqref{eq-hsugamma1} and \eqref{eq-hsugamma2} is a consequence of \eqref{eq-ders0}, \eqref{eq-legameh} and \eqref{eq-limitihtilde}. Indeed, taking into account \eqref{eq-deromegazero}, let us first notice that \eqref{eq-ders0} can be written as
\begin{equation}\label{eq-dim11}
s'_0(L)=-\dfrac{2kLs_0(L)^{2k-1}}{\Omega^{(2)}_0(L)},
\end{equation}
for every $L\in J$.
Now, \eqref{eq-hsugamma1} follows from the first three relations in \eqref{eq-limitihtilde}, observing that
\begin{equation*}
h^{(j)}(s_0(L);L)=\tilde{h}^{(j)}(0;L), \quad j\in\{1,2,3\},
\end{equation*}
for every $L\in J$.
On the other hand, from \eqref{eq-legameh} we infer that
\begin{align*}
\partial_{L} h(s_0(L);L)
&=\partial_{L} \tilde{h}(0;L)-h'(s_0(L);L)s'_0(L),
\\
\partial_{L} h'(s_0(L);L)
&=\partial_{L} \tilde{h}'(0;L)-h''(s_0(L);L)s'_0(L),
\end{align*}
for every $L\in J$. The last relations in \eqref{eq-limitihtilde}, \eqref{eq-dim11} and \eqref{eq-hsugamma1} allow then to conclude  that \eqref{eq-hsugamma2} holds true. The proof is complete.
\end{proof}

From the above lemma, by means of straightforward computations, it is possible to prove the following result.

\begin{lemma}\label{lem-h1}
Let us assume \eqref{cond-extra-omega02} and \eqref{cond-extra-dL}. Then, it holds that
\begin{align*}
&3(h''(s_0(L);L))^{2} - h'(s_0(L);L)h'''(s_0(L);L) = 
\\
&= \dfrac{1}{24 \Omega_{0}^{(2)}(L)} \Bigl{(} 5 (\Omega_{0}^{(3)}(L))^{2} - 3 \Omega_{0}^{(2)}(L) \Omega_{0}^{(4)}(L) \Bigr{)},
\end{align*}
and
\begin{equation} \label{eq-h3}
h''(s_0(L);L) \partial_{L} h(s_0(L);L) - h'(s_0(L);L) \partial_{L} h'(s_0(L);L) =  
-\dfrac{1}{4} \dfrac{\mathrm{d}}{\mathrm{d}L} \Omega_{0}^{(2)}(L),
\end{equation}
for every $L\in J$.
\end{lemma}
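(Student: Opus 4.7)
The plan is to derive both identities as direct algebraic consequences of the explicit formulas for $h'$, $h''$, $h'''$, $\partial_L h$, and $\partial_L h'$ at $s = s_0(L)$ established in Lemma~\ref{lem-h}, equations \eqref{eq-hsugamma1} and \eqref{eq-hsugamma2}. No further analytic input should be needed; the statement is essentially a polynomial identity in the quantities $\Omega_0^{(2)}, \Omega_0^{(3)}, \Omega_0^{(4)}, kLs_0^{2k-1}$, and $\tfrac{\mathrm{d}}{\mathrm{d}L}\Omega_0^{(2)}$.

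For the first identity, I would substitute the expressions for $h''(s_0(L);L)$ and $h'(s_0(L);L) h'''(s_0(L);L)$ into $3(h'')^2 - h' h'''$. The square $(h'')^2$ produces a term proportional to $(\Omega_0^{(3)})^2/\Omega_0^{(2)}$, while $h' h'''$ produces $\bigl(3\Omega_0^{(2)}\Omega_0^{(4)} - (\Omega_0^{(3)})^2\bigr)/(24\,\Omega_0^{(2)})$ because the factors $(\Omega_0^{(2)})^{1/2}$ and $(\Omega_0^{(2)})^{-3/2}$ combine cleanly. Reducing both terms to the common denominator $24\,\Omega_0^{(2)}$ and collecting the $(\Omega_0^{(3)})^2$ contributions yields exactly $5(\Omega_0^{(3)})^2 - 3\Omega_0^{(2)}\Omega_0^{(4)}$ in the numerator.

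For the second identity, the computation is even more transparent. Substituting \eqref{eq-hsugamma1}--\eqref{eq-hsugamma2}, the products $h''\,\partial_L h$ and $h'\,\partial_L h'$ share a common factor involving $kLs_0^{2k-1}\Omega_0^{(3)}/\Omega_0^{(2)}$; these contributions are designed to cancel in the difference $h''\,\partial_L h - h'\,\partial_L h'$. What survives is the term carried only by $\partial_L h'$, namely the one proportional to $\Omega_0^{(2)}\,\tfrac{\mathrm{d}}{\mathrm{d}L}\Omega_0^{(2)}$, and after dividing by $\Omega_0^{(2)}$ and simplifying the constants one finds the prefactor $-1/4$ announced in \eqref{eq-h3}.

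The only thing one should be slightly careful about is bookkeeping the factors of $\sqrt{2}$, signs, and powers of $\Omega_0^{(2)}$ (which enter with half-integer exponents). Beyond that, there is no real obstacle: once the formulas of Lemma~\ref{lem-h} are available, both identities reduce to transparent algebraic simplifications, and the entire proof amounts to a straightforward substitute-and-collect argument.
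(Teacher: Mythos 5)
Your proposal is correct and is exactly the paper's approach: the paper states that Lemma~\ref{lem-h1} follows from Lemma~\ref{lem-h} ``by means of straightforward computations,'' and your substitute-and-collect argument via \eqref{eq-hsugamma1}--\eqref{eq-hsugamma2} is precisely that computation. The intermediate values you describe (e.g.\ $3(h'')^2 = (\Omega_0^{(3)})^2/(6\,\Omega_0^{(2)})$, $h'h''' = (3\Omega_0^{(2)}\Omega_0^{(4)}-(\Omega_0^{(3)})^2)/(24\,\Omega_0^{(2)})$, and the cancellation of the $kLs_0^{2k-1}\Omega_0^{(3)}$ terms in the second identity) all check out.
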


According to \cite{Ch-87}, the function $h$ given in \eqref{eq-defh} can be used to introduce a change of variable in the integral defining $T_{\Omega}$ which transforms it in an integral which is not improper. To this aim, let us first observe that
\begin{equation} \label{eq-derh}
h'(s;L) = \mbox{sgn} (s-s_{0}(L)) \dfrac{\Omega'(s;L)}{2\sqrt{\Omega(s;L)}},\quad (s,L)\in \Gamma\setminus \gamma,
\end{equation}
and that $h'(s;L)$ can be extended by continuity to $\gamma$, as in Lemma~\ref{lem-h}.

Moreover, from \eqref{eq-segno2}, \eqref{eq-derh} and the expression of $h'(s;L)$ when $(s,L)\in \gamma$ we infer that
\begin{equation}\label{eq-hprimonozero}
h'(s;L)>0, \quad \text{for every $(s,L)\in \Gamma$,}
\end{equation}
thus proving that $h(\cdot;L)$ in invertible in $I_L$, for every $L\in J$.

As a consequence, in the integral defining $T_\Omega$ we can introduce the change of variable
\begin{equation} \label{eq-cambiovar}
s=s(\theta;H,L):=h^{-1}(\sqrt{H+\omega_{0}(L)}\sin \theta),
\quad \theta \in \biggl{[}-\frac{\pi}{2},\frac{\pi}{2}\biggr{]}.
\end{equation}
Via this transformation, we can prove the following result.

\begin{lemma}\label{lem-chicone}
Let us assume \eqref{cond-extra-omega02} and \eqref{cond-extra-dL}. Then, for every $(H,L)\in \Lambda$ it holds that
\begin{align*}
T_{\Omega}(H,L) &= \sqrt{2} \int_{-\frac{\pi}{2}}^{\frac{\pi}{2}} 
\dfrac{1}{h'(s;L)}\Bigg{|}_{s=s(\vartheta;H,L)}\,\mathrm{d}\vartheta,
\\
\partial_{H} T_{\Omega}(H,L) &= \dfrac{1}{\sqrt{2}} \int_{-\frac{\pi}{2}}^{\frac{\pi}{2}} 
\dfrac{3 (h''(s;L))^{2} - h'(s;L) h'''(s;L)}{(h'(s;L))^{5}}  \Bigg{|}_{s=s(\vartheta;H,L)} \, \cos^{2} \vartheta  \, \mathrm{d}\vartheta,
\\
\partial_{L} T_{\Omega}(H,L) &= \omega_{0}'(L) \partial_{H} T_{\Omega}(H,L) 
\\
&\quad + \sqrt{2} \int_{-\frac{\pi}{2}}^{\frac{\pi}{2}} 
\dfrac{h''(s;L) \partial_{L}h (s;L) - h'(s;L)\partial_{L} h'(s;L)}{(h'(s;L))^{3}} \Bigg{|}_{s=s(\vartheta;H,L)}\, \mathrm{d}\vartheta,
\end{align*}
where $s=s(\theta;H,L)$ is given in \eqref{eq-cambiovar}.
\end{lemma}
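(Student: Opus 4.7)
The plan is to establish all three formulas by working with the change of variable $s = s(\vartheta;H,L) = h^{-1}(\sqrt{H+\omega_0(L)}\sin\vartheta)$ defined in \eqref{eq-cambiovar}. This change of variable is well-defined on $[-\pi/2,\pi/2]$ thanks to \eqref{eq-hprimonozero}, and the regularity statement in Lemma~\ref{lem-h} guarantees that $h$ and its relevant derivatives are continuous across the singular locus $\gamma$. The key identity behind the transformation is $h(s;L)^2 = \Omega(s;L)$ off $\gamma$, so by construction $\Omega(s(\vartheta;H,L);L) = (H+\omega_0(L))\sin^2\vartheta$ and therefore $\sqrt{H+\omega_0(L)-\Omega(s;L)} = \sqrt{H+\omega_0(L)}\,\cos\vartheta$ on $[-\pi/2,\pi/2]$. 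Differentiating the defining relation $h(s;L) = \sqrt{H+\omega_0(L)}\sin\vartheta$ with respect to $\vartheta$ yields $\partial_\vartheta s = \sqrt{H+\omega_0(L)}\cos\vartheta / h'(s;L)$. Substituting both expressions into $T_\Omega(H,L) = \sqrt{2}\int_{s_-}^{s_+} ds/\sqrt{H+\omega_0(L)-\Omega(s;L)}$, the factor $\sqrt{H+\omega_0(L)}\cos\vartheta$ cancels and produces the first formula.

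For $\partial_H T_\Omega$, I plan to differentiate under the integral sign; this is now legal because, by Lemma~\ref{lem-h}$(i)$ together with \eqref{cond-extra-omega02}, the integrand $1/h'(s(\vartheta;H,L);L)$ is smooth and bounded on $[-\pi/2,\pi/2]\times K$ for any compact neighborhood $K$ of $(H,L)$ in $\Lambda$. Differentiating $h(s;L) = \sqrt{H+\omega_0(L)}\sin\vartheta$ with respect to $H$ gives $\partial_H s = \sin\vartheta / (2\sqrt{H+\omega_0(L)}\,h'(s;L))$, and hence
\begin{equation*}
\partial_H T_\Omega(H,L) = -\frac{1}{\sqrt{2}\sqrt{H+\omega_0(L)}}\int_{-\pi/2}^{\pi/2}\frac{h''(s;L)\sin\vartheta}{(h'(s;L))^3}\Bigg|_{s=s(\vartheta;H,L)}\mathrm{d}\vartheta.
\end{equation*}
Now I integrate by parts, writing $\sin\vartheta\,\mathrm{d}\vartheta = -\mathrm{d}(\cos\vartheta)$. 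The boundary terms vanish since $\cos(\pm\pi/2)=0$ and the integrand remains bounded by Lemma~\ref{lem-h}. For the differentiated factor I compute
\begin{equation*}
\frac{\mathrm{d}}{\mathrm{d}\vartheta}\frac{h''(s;L)}{(h'(s;L))^3} = \frac{h'(s;L)h'''(s;L)-3(h''(s;L))^2}{(h'(s;L))^4}\,\partial_\vartheta s = \frac{h'h'''-3(h'')^2}{(h')^5}\sqrt{H+\omega_0(L)}\cos\vartheta,
\end{equation*}
and the resulting $\sqrt{H+\omega_0(L)}$ cancels the one in the prefactor, yielding exactly the second formula.

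For $\partial_L T_\Omega$, I differentiate under the integral sign again. The $L$-derivative of the integrand has two contributions: one from the explicit $L$-dependence of $h'$ and one from $\partial_L s$. Differentiating $h(s;L)=\sqrt{H+\omega_0(L)}\sin\vartheta$ with respect to $L$ gives
\begin{equation*}
\partial_L s = \frac{1}{h'(s;L)}\left(\frac{\omega_0'(L)\sin\vartheta}{2\sqrt{H+\omega_0(L)}} - \partial_L h(s;L)\right).
\end{equation*}
Substituting into $\partial_L\bigl(1/h'(s;L)\bigr) = -(h''\partial_L s + \partial_L h')/(h')^2$ and distributing the three resulting pieces, the term proportional to $\omega_0'(L)\sin\vartheta$ reconstructs exactly the integral representation of $\partial_H T_\Omega$ obtained above, up to the factor $\omega_0'(L)$; the remaining two pieces combine into $\sqrt{2}\int(h''\partial_L h - h'\partial_L h')/(h')^3\,\mathrm{d}\vartheta$, giving the third formula.

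The main technical point, and the only one I expect to require real care, is justifying differentiation under the integral sign near the endpoints $\vartheta=\pm\pi/2$. This hinges on the fact that, in the new coordinates, every integrand that appears is a continuous function of $(\vartheta,H,L)$ on $[-\pi/2,\pi/2]\times\Lambda$: Lemma~\ref{lem-h} shows that $h'$, $h''$, $h'''$, $\partial_L h$, and $\partial_L h'$ extend continuously across $\gamma$ under the standing hypotheses \eqref{cond-extra-omega02}--\eqref{cond-extra-dL}, and $h'$ is bounded below by a positive constant on any compact subset of $\Gamma$. Once this dominated-convergence argument is recorded, the three formulas follow by the routine computations sketched above, and the integration-by-parts step used for $\partial_H T_\Omega$ is unproblematic because $\cos\vartheta$ kills the boundary contributions.
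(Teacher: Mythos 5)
Your proposal is correct and follows essentially the same route as the paper: the change of variable \eqref{eq-cambiovar}, differentiation under the integral sign justified by the continuity statements of Lemma~\ref{lem-h}, the implicit differentiation of $h(s;L)=\sqrt{H+\omega_0(L)}\sin\vartheta$, and the integration by parts in $\vartheta$ that converts the $h''\sin\vartheta/(h')^3$ term into the stated form, with the $\omega_0'(L)$ contribution recognized as $\omega_0'(L)\,\partial_H T_\Omega$. The only difference is that the paper simply cites \cite[Section~2]{Ch-87} for the first two formulas and writes out in detail only the $\partial_L T_\Omega$ computation, whereas you rederive all three; your derivation of the first two is exactly Chicone's argument unpacked, and your computations check out.
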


\begin{proof}
The formulas for $T_\Omega$ and $\partial_{H} T_\Omega$ given in Lemma~\ref{lem-chicone} have been proved in~\cite[Section~2]{Ch-87} in the case of functions not depending on the parameter $L$; the proofs given in~\cite{Ch-87} are valid also in our situation.

As far as the formula for $\partial_{L} T_\Omega$ is concerned, let us first write $T_\Omega$ as 
\begin{equation*}
T_\Omega (H,L)=\sqrt{2} \int_{-\frac{\pi}{2}}^{\frac{\pi}{2}} \varphi(\vartheta;H,L)\,\mathrm{d}\vartheta,
\end{equation*}
for every $(H,L)\in \Lambda$,
where the function $\varphi$ is defined by 
\begin{equation*}
\varphi (\theta, H, L)=\dfrac{1}{h'(s(\vartheta;H,L);L)}, \quad (\vartheta,H,L)\in \left[-\dfrac{\pi}{2},\dfrac{\pi}{2}\right]\times \Lambda.
\end{equation*}
A simple computation shows that
\begin{align*}
\partial_{L} \varphi (\theta, H, L) &=
- \biggl( \dfrac{\omega'_0(L)}{2\sqrt{H+\omega_0(L)}}\sin \vartheta-\partial_{L} h (s(\vartheta;H,L);L)\biggr) \dfrac{h''(s(\vartheta;H,L);L)}{h'(s(\vartheta;H,L);L)^3}
\\
& \quad -\dfrac{\partial_{L} h'(s(\vartheta;H,L);L)}{h'(s(\vartheta;H,L);L)^2},
\end{align*}
for every $(\vartheta,H,L)\in [-\pi/2,\pi/2]\times \Lambda$. From Lemma~\ref{lem-h} and the regularity of $s$ and $\omega_0$ we deduce that $\partial_{L} \varphi$ is a continuous function in $[-\pi/2,\pi/2]\times \Lambda$. Hence, we can differentiate $T_\Omega$ with respect to $L$ and obtain
\begin{equation*}
\partial_{L} T_\Omega (H,L)=\sqrt{2} \int_{-\frac{\pi}{2}}^{\frac{\pi}{2}} 
\partial_{L} \varphi(\vartheta;H,L)\,\mathrm{d}\vartheta,
\end{equation*}
for every $(H,L)\in \Lambda$.
Integrating by parts and taking into account the expression of $\partial_H T_\Omega$, we obtain the thesis.	
\end{proof}

\begin{remark}\label{rem-Schaaf}
The well known Chicone's criterion \cite[Theorem A]{Ch-87} for the monotonicity of $T_{\Omega}$ with respect to $H$ follows directly from the integral formula for $\partial_H T_\Omega$ given in Lemma~\ref{lem-chicone}. Indeed, it is straightforward to check that
\begin{align*}
&3 (h''(s;L))^{2} - h'(s;L) h'''(s;L) = 
\\
&= 6 \Omega(s;L)(\Omega''(s;L))^2 - 3 (\Omega'(s;L))^2 \Omega''(s;L) 
- 2 \Omega(s;L)\Omega'(s;L)\Omega'''(s;L)
\end{align*}
and the right-hand side of the above equality is exactly the quantity appearing in the assumption of Chicone's result.

On the other hand, the Schaaf's criterion \cite[Theorem~1]{Sc-85} provides the monotonicity of $T_{\Omega}$ with respect to $H$ whenever the two conditions 
\begin{itemize}
\item[$(i)$] $5(\Omega'''(s;L))^{2} - 3 \Omega''(s;L) \Omega^{(4)}(s;L) >0$ for all $s>s_{0}(L)$,
\item[$(ii)$] $\Omega'(s_{0}(L);L)  \Omega'''(s_{0}(L);L) <0$,
\end{itemize}
are satisfied. Actually, as shown in the proof of \cite[Lemma~1]{Sc-85}, the above conditions 
imply that
\begin{equation}\label{chicone-cond}
6 \Omega(s;L)(\Omega''(s;L))^2 - 3 (\Omega'(s;L))^2 \Omega''(s;L) 
- 2 \Omega(s;L)\Omega'(s;L)\Omega'''(s;L) >0,
\end{equation}
so that Schaaf's criterion can be considered weaker than Chicone's one. In spite of this, however, in the proof of Lemma~\ref{lem-dH-T}, we have chosen to verify hypotheses $(i)$ and~$(ii)$ since this seems to be easier than proving directly Chicone's assumption~\eqref{chicone-cond}.
\hfill$\lhd$
\end{remark}

We are now ready to state the result on the asymptotic behaviour of $T_{\Omega}(H,L)$ and its derivatives as $H\to -\omega_{0}(L)$. To this aim, let us define
\begin{equation} \label{def-sigma-zero}
\Sigma_{0}(L) = 5 (\Omega_{0}^{(3)}(L))^{2}-3 \Omega_{0}^{(2)}(L) \Omega_{0}^{(4)}(L).
\end{equation}

\begin{proposition} \label{prop-stime-tempo}
Let us assume \eqref{cond-extra-omega02} and \eqref{cond-extra-dL}. Then, for every $L\in J$ the following estimates hold true:
\begin{align*}
\lim_{H\to (-\omega_{0}(L))^{+}} T_{\Omega}(H,L) &= \dfrac{2\pi}{ (\Omega_{0}^{(2)} (L))^{\frac{1}{2}}},
\\
\lim_{H\to (-\omega_{0}(L))^{+}} \partial_{H} T_{\Omega}(H,L) &= \dfrac{\pi}{12 (\Omega_{0}^{(2)} (L))^{\frac{7}{2}}}\, \Sigma_{0}(L),
\\
\lim_{H\to (-\omega_{0}(L))^{+}} \partial_{L} T_{\Omega}(H,L) &= 
-\dfrac{\pi L\, s_{0}(L)^{2k-2}}{12 (\Omega_{0}^{(2)} (L))^{\frac{7}{2}}} \Bigl(s_{0}(L)^{2} \Sigma_{0}(L)-24k \Omega_{0}^{(2)}(L)\Omega_{0}^{(3)}(L) s_{0}(L)
\\ &\hspace{140pt} +24k(2k-1) (\Omega_{0}^{(2)} (L))^{2}\Bigr).
\end{align*}
\end{proposition}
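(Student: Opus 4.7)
The plan is to apply the integral formulas from Lemma~\ref{lem-chicone} together with the explicit evaluations at $s_0(L)$ collected in Lemma~\ref{lem-h} and Lemma~\ref{lem-h1}, then pass to the limit $H \to (-\omega_0(L))^+$ by dominated convergence. The key observation is that the change of variable \eqref{eq-cambiovar} makes all three integrals in Lemma~\ref{lem-chicone} proper (no improper integrability at the endpoints), because $1/h'$, $1/(h')^3$, $1/(h')^5$ are all continuous on $\Gamma$ by \eqref{eq-hprimonozero} and the extended regularity in Lemma~\ref{lem-h}. Moreover, as $H \to (-\omega_0(L))^+$, we have $\sqrt{H+\omega_0(L)}\to 0$, hence
\begin{equation*}
s(\vartheta;H,L)=h^{-1}\bigl{(}\sqrt{H+\omega_0(L)}\sin\vartheta\bigr{)}\longrightarrow h^{-1}(0)=s_0(L)
\end{equation*}
uniformly in $\vartheta\in[-\pi/2,\pi/2]$. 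On a compact neighborhood of $s_0(L)$ the integrands are bounded, so the limits can be pushed under the integral sign.

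First I would compute $\lim T_\Omega$: the integrand tends to $1/h'(s_0(L);L) = \sqrt{2}/(\Omega_0^{(2)}(L))^{1/2}$ by \eqref{eq-hsugamma1}, and integration over $[-\pi/2,\pi/2]$ multiplied by $\sqrt{2}$ yields $2\pi/(\Omega_0^{(2)}(L))^{1/2}$. Next, for $\lim\partial_H T_\Omega$, the integrand tends to the value at $s_0(L)$, which by Lemma~\ref{lem-h1} equals
\begin{equation*}
\frac{3(h'')^2-h'h'''}{(h')^5}\bigg|_{s_0}=\frac{\Sigma_0(L)/\bigl{(}24\,\Omega_0^{(2)}\bigr{)}}{(\Omega_0^{(2)})^{5/2}/(4\sqrt{2})}=\frac{\sqrt{2}\,\Sigma_0(L)}{6\,(\Omega_0^{(2)})^{7/2}}.
\end{equation*}
Multiplying by $1/\sqrt{2}$ and integrating $\cos^2\vartheta$ on $[-\pi/2,\pi/2]$ (which yields $\pi/2$) produces the stated value $\pi\Sigma_0(L)/\bigl{(}12(\Omega_0^{(2)})^{7/2}\bigr{)}$.

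For the third limit I would combine the two pieces in the formula of Lemma~\ref{lem-chicone}. The integral part has pointwise limit of the integrand equal to $\bigl{(}h''\partial_L h - h'\partial_L h'\bigr{)}/(h')^3$ at $s_0(L)$, which by \eqref{eq-h3} and \eqref{eq-hsugamma1} equals $-\tfrac{1}{4}\bigl{(}\tfrac{\mathrm{d}}{\mathrm{d}L}\Omega_0^{(2)}\bigr{)}(L)\big/\bigl{(}(\Omega_0^{(2)})^{3/2}/(2\sqrt{2})\bigr{)}$, giving after multiplication by $\sqrt{2}$ and integration the quantity $-\pi\bigl{(}\tfrac{\mathrm{d}}{\mathrm{d}L}\Omega_0^{(2)}\bigr{)}/(\Omega_0^{(2)})^{3/2}$. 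For the first piece I would use $\omega_0'(L)=-L s_0(L)^{2k}$ from \eqref{eq-derw0}, combined with the value of $\lim\partial_H T_\Omega$ just obtained. The final step is to express $\tfrac{\mathrm{d}}{\mathrm{d}L}\Omega_0^{(2)}$ in terms of the fundamental quantities: differentiating $\Omega_0^{(2)}(L)=\mathcal{W}''(s_0(L);L)$ and using \eqref{eq-derWL} together with \eqref{eq-dim11} gives
\begin{equation*}
\frac{\mathrm{d}}{\mathrm{d}L}\Omega_0^{(2)}(L)=\Omega_0^{(3)}(L)\,s_0'(L)+2k(2k-1)L\,s_0(L)^{2k-2}=-\frac{2kL\,s_0^{2k-1}\Omega_0^{(3)}}{\Omega_0^{(2)}}+2k(2k-1)L\,s_0^{2k-2}.
\end{equation*}
Substituting this expression, reducing to common denominator $(\Omega_0^{(2)})^{7/2}$, and factoring out $-\pi L s_0(L)^{2k-2}/\bigl{(}12(\Omega_0^{(2)})^{7/2}\bigr{)}$ delivers the claimed formula.

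The main obstacle is really bookkeeping rather than conceptual: one must carry out the simplification correctly, tracking the interplay between the boundary term $\omega_0'(L)\partial_H T_\Omega$ and the integral term, both of which contain information about $\Omega_0^{(3)}$ through different channels, and ensure that the final expression collects into the compact form featuring $\Sigma_0(L)$. A minor but unavoidable technical point is verifying, uniformly in $\vartheta$, that the integrands (which depend on $s(\vartheta;H,L)$ through compositions with $h',h'',h''',\partial_L h,\partial_L h'$) admit a common dominating function on a compact energy neighborhood of $-\omega_0(L)$; this follows from the continuity statements in Lemma~\ref{lem-h} together with $h'>0$ on $\Gamma$.
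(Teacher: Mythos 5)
Your proposal is correct and follows essentially the same route as the paper's proof: uniform convergence of $s(\vartheta;H,L)$ to $s_{0}(L)$ justifies passing to the limit under the integral sign in the three formulas of Lemma~\ref{lem-chicone}, the pointwise limits of the integrands are evaluated via Lemma~\ref{lem-h} and Lemma~\ref{lem-h1}, and the explicit computation of $\tfrac{\mathrm{d}}{\mathrm{d}L}\Omega_{0}^{(2)}(L)$ through \eqref{eq-derWL} and \eqref{eq-dim11} yields the stated factorization. All your intermediate constants check out against the paper's computation.
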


\begin{proof} Recalling \eqref{eq-cambiovar}, let us first observe that 
\begin{equation} \label{eq-dim22}
\lim_{H\to (-\omega_{0}(L))^{+}} s(\theta; H,L)=s_0(L), \quad \text{uniformly in $\theta \in \left[-\dfrac{\pi}{2},\dfrac{\pi}{2}\right]$},
\end{equation}
for every $L\in J$.
Hence, a simple argument shows that for every $L\in J$ the limits of $T_{\Omega}(H,L)$, $\partial_H T_{\Omega}(H,L)$ and $\partial_{L} T_{\Omega}(H,L)$ for $H\to -\omega_{0}(L)$ can be computed passing to the limit under integral sign.

Now, taking into account Lemma~\ref{lem-h}, Lemma~\ref{lem-h1}, Lemma~\ref{lem-chicone}, and \eqref{eq-dim22}, we immediately obtain
\begin{align*}
\lim_{H\to (-\omega_{0}(L))^{+}} T(H,L) 
&= \sqrt{2} \dfrac{\sqrt{2}}{(\Omega_{0}^{(2)}(L) )^{\frac{1}{2}}} \int_{-\frac{\pi}{2}}^{\frac{\pi}{2}} \,\mathrm{d}\vartheta =  \dfrac{2\pi}{(\Omega_{0}^{(2)}(L))^{\frac{1}{2}}}
\end{align*}
and
\begin{align*}
&\lim_{H\to (-\omega_{0}(L))^{+}} \partial_{H} T(H,L) =
\\
&= \dfrac{1}{\sqrt{2}} \dfrac{1}{24 \Omega_{0}^{(2)}(L)} \Bigl(5(\Omega_{0}^{(3)}(L))^{2}-3 \Omega_{0}^{(2)}(L) \Omega_{0}^{(4)}(L) \Bigr) \dfrac{2^{\frac{5}{2}}}{(\Omega_{0}^{(2)}(L))^{\frac{5}{2}}} \int_{-\frac{\pi}{2}}^{\frac{\pi}{2}} \cos^{2} \vartheta \,\mathrm{d}\vartheta
\\
&= \dfrac{\pi}{12 (\Omega_{0}^{(2)} (L))^{\frac{7}{2}}}\, \Sigma_{0}(L).
\end{align*}
As for the estimate of $\partial_{L} T_\Omega$, we first observe that 
\begin{equation*}
\dfrac{\mathrm{d}}{\mathrm{d}L} \Omega_{0}^{(2)}(L) = \dfrac{\mathrm{d}}{\mathrm{d}L} \Omega'' (s_0(L);L)=
\Omega'''(s_0(L);L)s'_0(L)+\partial_{L} \Omega''(s_0(L);L),
\end{equation*}
for every $L\in J$.
Recalling \eqref{eq-derWL}, \eqref{eq-ders0} and \eqref{eq-deromegazero} we then obtain
\begin{align*}
\dfrac{\mathrm{d}}{\mathrm{d}L} \Omega_{0}^{(2)}(L) 
&= - \dfrac{2kLs_0(L)^{2k-1}\Omega_0^{(3)}(L)}{\Omega_0^{(2)}(L)}+\partial_{L} \mathcal{W}''(s_0(L);L)
\\
&= -\dfrac{2kLs_0(L)^{2k-1}\Omega_0^{(3)}(L)}{\Omega_0^{(2)}(L)}+2k(2k-1)Ls_0(L)^{2k-2}
\\
&= -\dfrac{2kLs_0(L)^{2k-2}}{\Omega_0^{(2)}(L)}\,  \Bigl(s_0(L)\Omega_0^{(3)}(L)-(2k-1)\Omega_0^{(2)}(L)\Bigr),
\end{align*}
for every $L\in J$. Taking into account \eqref{eq-derw0}, the first relation in \eqref{eq-hsugamma1}, \eqref{eq-h3} and the asymptotic estimate of $\partial_H T_\Omega$ we deduce that
\begin{align*}
&\lim_{H\to (-\omega_{0}(L))^{+}} \partial_{L} T(H,L) =
\\
&=- \dfrac{\pi Ls_0(L)^{2k}}{12 (\Omega_0^{(2)}(L))^{\frac{7}{2}}} \, \Sigma_0(L)
\\
&\quad + \sqrt{2} \int_{-\frac{\pi}{2}}^{\frac{\pi}{2}} \dfrac{2kLs_0(L)^{2k-2}}{4\Omega_0^{(2)}(L)} \Bigl(s_0(L)\Omega_0^{(3)}(L)-(2k-1)\Omega_0^{(2)}(L)\Bigr) \dfrac{2\sqrt{2}}{(\Omega_0^{(2)}(L))^{\frac{3}{2}}}\, \mathrm{d}\vartheta
\\
&=- \dfrac{\pi Ls_0(L)^{2k}}{12 (\Omega_0^{(2)}(L))^{\frac{7}{2}}} \, \Sigma_0(L)
+ \dfrac{2k\pi Ls_0(L)^{2k-2}}{(\Omega_0^{(2)}(L))^{\frac{5}{2}}} \Bigl(s_0(L)\Omega_0^{(3)}(L)-(2k-1)\Omega_0^{(2)}(L)\Bigr)
\\
&= - \dfrac{\pi Ls_0(L)^{2k-2}}{12(\Omega_0^{(2)}(L))^{\frac{7}{2}}}\Bigl(s_0(L)^{2} \Sigma_0(L) -24k s_0(L)\Omega_0^{(2)}(L) \Omega_0^{(3)}(L)+24k(2k-1)(\Omega_0^{(2)}(L))^2\Bigr).
\end{align*}
The proposition is thus proved.
\end{proof}

\begin{remark}\label{rem-cond-extra-dL}
A careful inspection of the proofs shows that assumption \eqref{cond-extra-dL} is needed only to investigate the dependence of the time-maps on $L$. Thus, in particular, the first and the second formula of Proposition~\ref{prop-stime-tempo} are valid under the sole assumption \eqref{cond-extra-omega02}.
\hfill$\lhd$
\end{remark}

\bibliographystyle{elsart-num-sort}
\bibliography{BoDaFe-biblio}

\end{document}